\documentclass{amsart}

\title[Class choice and the surprising weakness of \KM]{Class choice and the surprising weakness of Kelley-Morse set theory}

\author{Victoria Gitman}
\address[V. Gitman]{The City University of New York, CUNY Graduate Center, Mathematics Program, 365 Fifth Avenue, New York, NY 10016}
\email{vgitman@gmail.com}
\urladdr{https://victoriagitman.github.io/}

\author{Joel David Hamkins}
\address[Joel David Hamkins]
{O'Hara Professor of Logic, University of Notre Dame, 100 Malloy Hall, Notre Dame, IN 46556 USA}
\email{jdhamkins@nd.edu}
\urladdr{http://jdh.hamkins.org}

\author{Thomas A. Johnstone}
\address{T. A. Johnstone, Mathematics, New York City College of Technology, 300 Jay Street, Brooklyn, NY 11201}
\email{tjohnstone@citytech.cuny.edu}
\thanks{}
\usepackage{latexsym,amsfonts,amsmath,amssymb,hyperref,mathrsfs}
\usepackage{verbatim}
\usepackage{bbm}

\newtheorem{theorem}{Theorem}
\newtheorem*{theorem*}{Theorem}

\newtheorem*{maintheorem*}{Main Theorem}
\newtheorem*{maintheorems*}{Main Theorems}

\newtheorem*{corollary*}{Corollary}
\newtheorem*{corollaries*}{Corollaries}

\newtheorem{lemma}[theorem]{Lemma}

\newtheorem{observation}[theorem]{Observation}

\theoremstyle{definition}
\newtheorem{definition}[theorem]{Definition}
\newtheorem*{definition*}{Definition}

\newtheorem{question}[theorem]{Question}
\newtheorem*{question*}{Question}

\newtheorem*{questions*}{Questions}
\newtheorem*{mainquestion*}{Main Question} % without numbering
\newtheorem*{openquestion*}{Open Question} % without numbering
\theoremstyle{remark}

\theoremstyle{theorem}% in case new theorems are declared in main.
\newcommand{\QED}{\end{proof}}

\def\proclaim[#1]{{\bf #1}}
\def\BF#1.{{\bf #1.}}

\def\says#1:#2\par{\item[#1] #2\par}
\newcommand{\Los}{\L o\'s}

\newcommand{\Godel}{G\"odel}

\newcommand{\Levy}{L\'{e}vy}
\newcommand{\Lowenheim}{L\"owenheim}

\renewcommand{\P}{{\mathbb P}}
\newcommand{\Q}{{\mathbb Q}}

\newcommand{\one}{\mathbbm{1}} % requires \usepackage{bbm}
\makeatletter
\newcommand{\dotminus}{\mathbin{\text{\@dotminus}}}
\newcommand{\@dotminus}{%
  \ooalign{\hidewidth\raise1ex\hbox{.}\hidewidth\cr$\m@th-$\cr}%
}
\makeatother

\newcommand{\of}{\subseteq}

\newcommand{\set}[1]{\{\,{#1}\,\}}

\newcommand{\Add}{\mathop{\rm Add}}

\newcommand{\Coll}{\mathop{\rm Coll}}
\newcommand{\Ult}{\mathop{\rm Ult}}

\newcommand{\Con}{\mathop{{\rm Con}}}

\newcommand{\image}{\mathbin{\hbox{\tt\char'42}}}

\newcommand{\restrict}{\upharpoonright} % uses amssymb
\newcommand{\satisfies}{\models}
\newcommand{\forces}{\Vdash}

\newcommand\dbrace{\hskip-1.5em\raise3pt\hbox{\rotatebox[origin=c]{-35}{$\left.\strut^{\phantom{|}}\right\}$}}}% useful for tetration

\newcommand\UParroW{{\setbox0\hbox{$\Uparrow$}\rlap{\hbox to \wd0{\hss$\mid$\hss}}\box0}}
\renewcommand{\setminus}{\raise.3ex\hbox{\rotatebox{-20}{$-$}}} % the usual setminus is absurdly huge and vertical

\renewcommand{\emptyset}{\varnothing}

\newcommand{\Union}{\bigcup}

\newcommand{\smalllt}{\mathrel{\mathchoice{\raise2pt\hbox{$\scriptstyle<$}}{\raise1pt\hbox{$\scriptstyle<$}}{\raise0pt\hbox{$\scriptscriptstyle<$}}{\scriptscriptstyle<}}}
\newcommand{\smallleq}{\mathrel{\mathchoice{\raise2pt\hbox{$\scriptstyle\leq$}}{\raise1pt\hbox{$\scriptstyle\leq$}}{\raise1pt\hbox{$\scriptscriptstyle\leq$}}{\scriptscriptstyle\leq}}}
\newcommand{\lt}{\smalllt}

\newcommand{\ltkappa}{{{\smalllt}\kappa}}

\newcommand{\leqdelta}{{{\smallleq}\delta}}

   \def\DHLhksqrt#1#2{%
   \setbox0=\hbox{$#1\sqrt{#2\,}$}\dimen0=\ht0
   \advance\dimen0-0.2\ht0
   \setbox2=\hbox{\vrule height\ht0 depth -\dimen0}%
   {\box0\lower0.4pt\box2}}

\def\[#1]{\mathopen{\lbrack\!\lbrack}#1\mathclose{\rbrack\!\rbrack}}
\newbox\gnBoxA
\newbox\gnBoxB
\newdimen\gnCornerHgt
\setbox\gnBoxA=\hbox{\tiny$\ulcorner$}
\global\gnCornerHgt=\ht\gnBoxA
\newdimen\gnArgHgt
\def\gcode #1{%
\setbox\gnBoxA=\hbox{$#1$}%
\setbox\gnBoxB=\hbox{$\bar #1$}%
\gnArgHgt=\ht\gnBoxB%
\ifnum     \gnArgHgt<\gnCornerHgt \gnArgHgt=0pt%
\else \advance \gnArgHgt by -\gnCornerHgt%
\fi \raise\gnArgHgt\hbox{\tiny$\ulcorner$} \box\gnBoxA %
\raise\gnArgHgt\hbox{\tiny$\urcorner$}}
\newcommand{\UnderTilde}[1]{{\setbox1=\hbox{$#1$}\baselineskip=0pt\vtop{\hbox{$#1$}\hbox to\wd1{\hfil$\sim$\hfil}}}{}}
\newcommand{\Undertilde}[1]{{\setbox1=\hbox{$#1$}\baselineskip=0pt\vtop{\hbox{$#1$}\hbox to\wd1{\hfil$\scriptstyle\sim$\hfil}}}{}}
\newcommand{\undertilde}[1]{{\setbox1=\hbox{$#1$}\baselineskip=0pt\vtop{\hbox{$#1$}\hbox to\wd1{\hfil$\scriptscriptstyle\sim$\hfil}}}{}}
\newcommand{\UnderdTilde}[1]{{\setbox1=\hbox{$#1$}\baselineskip=0pt\vtop{\hbox{$#1$}\hbox to\wd1{\hfil$\approx$\hfil}}}{}}
\newcommand{\Underdtilde}[1]{{\setbox1=\hbox{$#1$}\baselineskip=0pt\vtop{\hbox{$#1$}\hbox to\wd1{\hfil\scriptsize$\approx$\hfil}}}{}}

\def\<#1>{\left\langle#1\right\rangle}

\newcommand{\Tr}{\mathop{\rm Tr}\nolimits}

\newcommand{\ORD}{\mathord{{\rm Ord}}}
\newcommand{\Ord}{\mathord{{\rm Ord}}}
\newcommand{\ZFC}{{\rm ZFC}}
\newcommand{\ZF}{{\rm ZF}}

\newcommand{\KM}{{\rm KM}}

\newcommand{\GBC}{{\rm GBC}}

\newcommand{\GCH}{{\rm GCH}}

\newcommand{\AC}{{\rm AC}}

\newcommand{\HOD}{{\rm HOD}}

\newcommand{\PA}{{\rm PA}}

\newcommand{\cell}[1]{\boxit{\hbox to 17pt{\strut\hfil$#1$\hfil}}}
\newcommand{\head}[2]{\lower2pt\vbox{\hbox{\strut\footnotesize\it\hskip3pt#2}\boxit{\cell#1}}}
\newcommand{\boxit}[1]{\setbox4=\hbox{\kern2pt#1\kern2pt}\hbox{\vrule\vbox{\hrule\kern2pt\box4\kern2pt\hrule}\vrule}}
\newcommand{\Col}[3]{\hbox{\vbox{\baselineskip=0pt\parskip=0pt\cell#1\cell#2\cell#3}}}
\newcommand{\tapenames}{\raise 5pt\vbox to .7in{\hbox to .8in{\it\hfill input: \strut}\vfill\hbox to
.8in{\it\hfill scratch: \strut}\vfill\hbox to .8in{\it\hfill output: \strut}}}
\newcommand{\Head}[4]{\lower2pt\vbox{\hbox to25pt{\strut\footnotesize\it\hfill#4\hfill}\boxit{\Col#1#2#3}}}
\newcommand{\Dots}{\raise 5pt\vbox to .7in{\hbox{\ $\cdots$\strut}\vfill\hbox{\ $\cdots$\strut}\vfill\hbox{\
$\cdots$\strut}}}
\theoremstyle{definition}

\newcommand{\fa}{{\rm Z}_2}
\newcommand{\Cl}{\mathcal S}
\newcommand{\V}{\mathcal V}
\newcommand{\sym}{{\rm sym}}
\newcommand{\Fil}{\mathcal F}
\newcommand{\HS}{{\rm HS}}

\newcommand{\s}{\mathbb S}
\begin{document}

\maketitle

\begin{abstract}
Kelley-Morse set theory \KM\ is weaker than generally supposed and fails to prove several principles that may be desirable in a foundational second-order set theory. Even though \KM\ includes the global choice principle, for example, (i) \KM\ does not prove the \emph{class choice} scheme, asserting that whenever every set $x$ admits a class $X$ with $\varphi(x,X)$, then there is a class $Z\of V\times V$ for which $\varphi(x,Z_x)$ on every section. This scheme can fail with \KM\ even in low-complexity first-order instances $\varphi$ and even when only a set of indices $x$ are relevant. For closely related reasons, (ii) the theory \KM\ does not prove the \Los\ theorem scheme for internal second-order ultrapowers, even for large cardinal ultrapowers, such as the ultrapower by a normal measure on a measurable cardinal. Indeed, the theory \KM\ itself is not generally preserved by internal ultrapowers. Finally, (iii) \KM\ does not prove that the $\Sigma^1_n$ logical complexity is invariant under first-order quantifiers, even  bounded first-order quantifiers. For example, $\forall \alpha{<}\delta\ \psi(\alpha,X)$ is not always provably equivalent to a $\Sigma^1_1$ assertion when $\psi$ is. Nevertheless, these various weaknesses in \KM\ are addressed by augmenting it with the class choice scheme, thereby forming the theory $\KM^+$, which we propose as a robust \KM\ alternative for the foundations of second-order set theory.
\end{abstract}

\section{Introduction}

Suppose that for every natural number $n$ there is a class $X$ for which some property $\varphi(n,X)$ holds---assume we are working in a suitable set theory with classes, including the axiom of choice and even the global choice principle. Should we expect to find a uniformizing class $Z\of\omega\times V$ for which $\varphi(n,Z_n)$ holds for all the various sections? Similarly, if every ordinal $\alpha$ has a class $X$ for which $\varphi(\alpha,X)$ holds, then should we we expect to find a class $Z\of\Ord\times V$ for which $\varphi(\alpha,Z_\alpha)$ holds on every section? Or if every set $x$ admits a class $X$ with $\varphi(x,X)$, then will we find a class $Z\of V\times V$ which unifies these witnesses $\varphi(x,Z_x)$? 

These would all be instances of the \emph{class choice} scheme, a central concern of this article. The class $Z$ in each case uniformizes the class witnesses, in effect choosing for each index $x$ a class witness $Z_x$ on that section. The class choice scheme is therefore a choice principle enabling a choice of classes, rather than sets. 

It is relatively easy to see that the class choice scheme is not provable in \Godel-Bernays \GBC\ set theory, even with the axiom of global choice being available in that theory. A countermodel is provided by any transitive model of $\ZFC+V=\HOD$, equipped with all and only its definable classes. In such a model, we may observe that for each $n$ there is a definable $\Sigma_n$-truth predicate (a partial satisfaction class), but by Tarski's theorem on the non-definability of truth, these cannot be unified into a single definable class. Indeed, in this example we are not really ``choosing'' the classes at all, since for each $n$ there is in fact a unique $\Sigma_n$-truth predicate. Rather, what is going on here is that \GBC\ is not strong enough (although \KM\ is) to collect these various unique witnesses together into a single uniform class. The situation resembles more a class-wise failure of collection or even replacement than choice.

Perhaps it will be a little more surprising, however, to learn that the class choice scheme is not provable even in the considerably stronger Kelley-Morse \KM\ set theory, a theory for which the simple truth-predicate counterexample above is no problem at all, as \KM\ proves the existence of uniform first-order truth predicate satisfaction classes relative to any class parameter. Nevertheless, in section \ref{sec:independence} we show that \KM\ does not prove the class choice scheme, and indeed the scheme can fail even in very low-complexity first-order instances and again with merely a set of indices. So although Kelley-Morse set theory \KM\ proves the existence of first-order truth predicates, it does not prove the class choice scheme, even in easy-seeming cases.

Furthermore, the proof methods reveal several further surprising weaknesses of \KM, which may call into question the suitability of this theory as a foundation of set theory. Namely, \KM\ does not prove the \Los\ theorem scheme for internal second-order ultrapowers, even in the case of large-cardinal ultrapowers, such as those arising from a normal measure on a measurable cardinal---yes, the familiar large cardinal embeddings can fail to be elementary in the language of \KM\ set theory. In the case of ultrapowers by an ultrafilter on $\omega$, we show, the internal ultrapower of the \KM\ universe is not itself always even a model of \KM. Finally, we will show that \KM\ does not prove that second-order logical complexity is preserved by first-order quantifiers, even bounded first-order quantifiers. In light of all these weaknesses, the theory \KM\ appears not quite as robust a foundational theory as might have been thought.

Meanwhile, by augmenting the theory \KM\ with the class choice scheme, thereby forming the theory we denote $\KM^+$, we addresses all these weaknesses, and in this sense $\KM^+$ succeeds as a robust foundational treatment of second-order set theory.

\subsection{Background on sets and classes}

The objects of first-order set theory are sets, and set theorists commonly take the classes of a model of first-order set theory to be simply the definable collections of sets (allowing parameters), which means that the study of their properties naturally takes place in the meta-theory. A formal framework for studying sets and classes, in contrast, is provided by second-order set theory. The most natural interpretation of second-order set theory is in a two-sorted logic with separate sorts (separate variables and quantifiers) for sets and classes. The language of second-order set theory has two membership relations, one deciding set membership in sets and the other deciding set membership in classes. Because foundational axioms of second-order set theory assert that classes are extensional collections of sets, the models of second-order set theory relevant for us have the form $\< V,\in,\Cl>$, where $\<V,\in>$ is a model of the language of first-order set theory and $\Cl$ is the family of classes, that is, collections of sets.\footnote{Although two-sorted logic provides a heuristically preferred interpretation for second-order set theory, it can in fact be formalized in one-sorted first-order logic. In the first-order interpretation, the objects are classes with a membership relation, and sets are defined to be those classes which are elements of other classes.} Following the standard convention, we use lowercase letters for the sets and uppercase letters for the classes. In the meta-theory of second-order set theory, we can also study \emph{hyperclasses}, the definable (with parameters) collections of classes.

Any reasonable axiomatic foundation for second-order set theory should assert that the collection of all sets satisfies $\ZFC$ and that, more generally, it continues to satisfy $\ZFC$ in the extended first-order language with predicates for any finitely many classes. The more general requirement translates into the class replacement axiom, which states that the image of a set under a class function is itself a set. A reasonable foundation should also include some class existence principles and should in particular generalize the notion of classes from first-order set theory by asserting that every first-order definable collection is a class. The two most commonly considered axiomatic foundations for second-order set theory are the \Godel-Bernays axioms ($\GBC$) and the Kelley-Morse axioms ($\KM$). They are distinguished by the strength of their comprehension axiom schemes that decide which (second-order) definable collections are classes. The theory $\GBC$ includes only the minimal first-order comprehension and it is equiconsistent with $\ZFC$. The theory $\KM$ asserts full second-order comprehension (see the next section~\ref{sec:KM} for precise axiomatizations) and strength-wise it lies between the existence of a transitive model of $\ZFC$ and the existence of an inaccessible cardinal (see lemmas~\ref{le:transitiveModelKM} and~\ref{le:inaccessibleKM}).

\subsection{Class choice schemes}

Axiomatizations of second-order set theory often include choice principles for classes because these imply many properties desirable in this context. A commonly used choice principle is the class choice scheme, which asserts that every $V$-indexed definable family of hyperclasses has a choice function $\mathcal F:V\to \Cl$. Historically, this principle can be traced to Marek's PhD thesis in the 1970s \cite{marek:phd} and was rediscovered several times, most recently in the work of Hrb{\'a}\v{c}ek on nonstandard second-order set theories with infinitesimals \cite{hrbacek:nonstandardClassSetTheory} and Antos and Friedman on definable hyperclass forcing over $\KM$ models \cite{antos:thesis}. To give the precise statement of the class choice scheme, we use the standard notation that if $Z$ is a class, then
$$Z_x=\set{y\mid(x,y)\in Z}$$
denotes the class coded on section $x$ of $Z$. We think of such a class $Z$ as naturally encoding the function $x\mapsto Z_x$, enabling us in effect to refer to functions $\mathcal F:V\to\mathcal S$ as above. In general, if a collection of classes can be coded like this by a single class $Z$, we will say that it constitutes a \emph{codable} hyperclass.

\begin{definition}
The \emph{class choice} scheme is the following scheme of assertions, for any formula $\varphi$ in the language of second-order set theory and allowing any class parameter $A$:
$$\forall x\,\exists X\,\varphi(x,X,A)\rightarrow\exists Z\,\forall x\,\varphi(x,Z_x,A).$$
\end{definition}
As we hinted in the introduction, the class choice scheme can also be viewed as a collection principle rather than a choice principle, because it is equivalent over \GBC\ to the following slightly weaker-seeming principle, the \emph{class-collection} scheme:
$$\forall x\,\exists X\,\varphi(x,X,A)\rightarrow \exists Z\,\forall x\,\exists y\,\varphi(x,Z_y,A).$$
In this formulation, the class witnesses $X$ are merely collected as sections $Z_y$, but not necessarily on the corresponding section $Z_x$. Note first that either principle over \GBC\ implies \KM, and then the further point is that with the class collection scheme one can choose for each $x$ a suitable $y$ by global choice and thereby replace the sections on the correct index, making the two principles equivalent.

The class choice scheme admits several natural weakenings, by stratifying on the logical complexity of $\varphi$ or restricting to sets of indices.\goodbreak

\begin{definition}\
\begin{itemize}
\item The $\Sigma^1_n$ (or $\Pi^1_n$)-\emph{class choice} scheme is the restriction of the class choice scheme to instances $\varphi$ of complexity at most $\Sigma^1_n$ (or $\Pi^1_n$).\\
\item The \emph{set-indexed class choice} scheme is the restriction of the class choice scheme to \emph{set}-indexed families. That is, for any set $a$, class parameter $A$, and any formula $\varphi$ in the language of second-order set theory, we include
$$\forall x\in a\,\exists X\,\varphi(x,X,A)\rightarrow\exists Z\,\forall x\in a\,\varphi(x,Z_x,A).$$
\item The \emph{parameter-free class choice} scheme is the restriction of the class choice scheme to parameter-free formulas $\varphi(x,X)$.
\end{itemize}
\end{definition}

We find it insightful to compare the class choice scheme with its analogue in second-order arithmetic, as we discuss in section~\ref{sec:soArithmetic}. For instance, the notion of the parameter-free class choice scheme was introduced by Guzicki \cite{guzicki:choiceScheme} in the context of second-order arithmetic.

%Two natural applications of the class choice scheme are to establish the \Los\ theorem for internal second-order ultrapowers and to show that formulas of the form $\forall x\,\varphi(x)$ and $\exists x\,\varphi(x)$, where $\varphi$ is $\Sigma^1_n$ (or $\Pi^1_n$) are equivalent to $\Sigma^1_n$ (or $\Pi^1_n$)-formulas. Indeed, it is not difficult to see (theorem~\ref{th:LosIsEquivalentToSetSizedChoiceScheme}) that the set-indexed class choice scheme is precisely equivalent (over \KM) to the \Los\ theorem for internal second-order ultrapowers.

\subsection{The main results}

We aim to show that $\KM$ fails to prove even the weakest instances of the class choice scheme, concerning the existence of choice functions for $\omega$-indexed parameter-free first-order definable families. This provides an interesting counterpoint to the situation in second-order arithmetic, where $\fa$, usually considered a tight arithmetic analogue of $\KM$ (more on this in section~\ref{sec:soArithmetic}), proves all $\Sigma^1_2$ instances of the class choice scheme. We also show that $\KM$ together with the set-indexed class choice scheme does not imply the class choice scheme even for parameter-free first-order formulas and that $\KM$ together with the parameter-free class choice scheme does not imply the class choice scheme. We shall freely make use of mild large cardinal assumptions when proving the main results, since these assumption do not detract from our main point that \KM\ does not prove the desired features for a foundational second-order theory of sets and classes, as set theorists want to use the foundational theory with those and far stronger large cardinals.

\begin{theorem}
\emph{($\ZFC\,+\,$there is a Mahlo cardinal)} There is a model of $\KM$ in which an instance
$$\forall n{\in}\omega\,\exists X\,\varphi(n,X)\rightarrow \exists Z\,\forall n{\in}\omega\,\varphi(n,Z_n)$$
of the class choice scheme fails for a first-order formula $\varphi(x,X)$.
\end{theorem}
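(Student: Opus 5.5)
The plan is to build a model whose first-order part is $V_\kappa[G]$ for a Mahlo cardinal $\kappa$ and a suitable forcing extension by $G$. Its classes will be an increasing union $\Cl=\Union_n\Cl_n$, where $\Cl_n$ consists of the subsets of $V_\kappa[G]$ lying in the extension by only the first $n$ ``threads'' of the generic. For each $n$ the $n$-th thread will be a class characterized by a first-order property $\varphi(n,X)$, but no single class will code all the threads.

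\textbf{The forcing.} Fix a Mahlo cardinal $\kappa$; we may assume $V=L$ if convenient for homogeneity and \GCH. Let $\P$ be the Easton-support product, over inaccessible $\alpha<\kappa$, of the full-support products $\prod_{n<\omega}\Add(\alpha,1)$. Write $g_{\alpha,n}\of\alpha$ for the $n$-th generic subset added at stage $\alpha$, and let $X_n=\set{(\alpha,\xi)\mid \xi\in g_{\alpha,n}}$ be the $n$-th thread. The Mahlo hypothesis is used here: with Easton support, $\P$ is $\kappa$-c.c. and $\P\restrict\alpha$ is small below each inaccessible $\alpha$. Hence $\kappa$ remains inaccessible, every $V_\alpha[G]$ for $\alpha<\kappa$ is a set of the final model $M=V_\kappa[G]$, and in particular every $g_{\alpha,n}$ is a set of $M$. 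Let $\varphi(n,X)$ be the first-order assertion, in $\<M,\in,X>$, that for every inaccessible $\alpha$ the section $X_\alpha$ equals $g_{\alpha,n}$. To make this expressible, I will have $M$ define the sequence $\<g_{\alpha,n}\mid n<\omega>$ from $\alpha$ uniformly, for instance by coding it into the \GCH\ pattern just above $\alpha$ using an extra coding factor in each stage. Clearly $\varphi(n,X)$ holds exactly for $X=X_n$.

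\textbf{The classes.} Let $H_n$ be the generic restricted to the coordinates $\le n$ at all stages, together with the full generic below every $\alpha<\kappa$ (the latter is harmless, since those pieces are sets). Put $\Cl_n=P(M)\cap V[H_n]$ and $\Cl=\Union_n\Cl_n$. Each $\<M,\in,\Cl_n>$ satisfies $\KM$ because $\kappa$ is inaccessible in $V[H_n]$ and $M=V_\kappa^{V[H_n]}$; this is the usual $V_{\kappa+1}$ model from lemma~\ref{le:inaccessibleKM}. Since $X_n\in\Cl_n$, the hypothesis $\forall n\,\exists X\,\varphi(n,X)$ holds. If some $Z\in\Cl$ satisfied $\forall n\,\varphi(n,Z_n)$, then $Z\in\Cl_m$ for some $m$, so $X_{m+1}=Z_{m+1}\in V[H_m]$. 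This contradicts mutual genericity of coordinate $m+1$ over $V[H_m]$.

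\textbf{Verifying $\KM$ for the union.} The main obstacle is full second-order comprehension in $\<M,\in,\Cl>$, since class quantifiers range over the whole union and not over any single $\Cl_n$. Class replacement and the first-order axioms are inherited, because every class lies in some $\KM$ model $\Cl_n$ with the same sets. For comprehension I plan a symmetry argument. Consider the group of automorphisms of $\P$ that permute the coordinates $n<\omega$ uniformly across all stages $\alpha$. We must make $\varphi$ refer to coordinates via the coding, so that $\Cl$ is invariant: an automorphism fixing coordinates $\le m$ sends $\Cl$ to itself, as a union of the same kind.

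Suppose $Y=\set{x\mid \<M,\in,\Cl>\satisfies\psi(x,A)}$ with $A\in\Cl_m$. Then $Y$ is definable in $V[G]$ from $A$ and $\kappa$, via names for $M$ and $\Cl$. By the homogeneity of $\Add(\alpha,1)$ and the permutation automorphisms fixing coordinates $\le m$, every condition's decision of $x\in Y$ can be reduced to the part of the condition on coordinates $\le m$. It follows that $Y\in V[H_m]$, and hence $Y\in\Cl_m$.

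The delicate points I expect to need care are the following. The permutations must act on a full-support product in each stage, and they must preserve the Easton support. The name for $\Cl$ must be shown to be fixed by these automorphisms, not merely mapped to another union with a different enumeration. Finally, the coding that makes $\varphi$ first-order must itself be symmetric, meaning the coding factor should code the sequence in a way that permutations carry along.
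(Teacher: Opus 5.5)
There is a genuine gap, and it is fatal to this design. To make $\varphi(n,X)$ first-order and parameter-free, you arrange that $M=V_\kappa[G]$ defines $\alpha\mapsto\langle g_{\alpha,n}\mid n<\omega\rangle$ uniformly, via the \GCH\ pattern above $\alpha$. But then the class
$$Z=\set{(n,(\alpha,\xi))\mid n<\omega,\ \alpha<\kappa\text{ inaccessible},\ \xi\in g_{\alpha,n}}$$
is first-order definable over $M$ without parameters. It therefore belongs to every \GBC\ model whose sets are $M$, by first-order comprehension, and $Z_n=X_n$ satisfies $\varphi(n,Z_n)$ for every $n$. So your instance \emph{holds} in $\<M,\in,\Cl>$, however $\Cl$ is chosen.

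The same observation shows your setup is inconsistent. You need both $M=V_\kappa^{V[H_m]}$ and $X_{m+1}\notin V[H_m]$, but $X_{m+1}$ is definable over $M$ and so lies in $V[H_m]$. Your ``delicate point'' that the coding be symmetric cannot be met either: an automorphism that moves coordinate $m+1$ cannot fix a coding from which $g_{\alpha,m+1}$ is read off.

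Separately, the definition of $H_n$ does not work. Adjoining ``the full generic below every $\alpha<\kappa$'' adjoins $\Union_{\alpha<\kappa}G\restrict\alpha=G$, so $V[H_n]=V[G]$. If you leave it out, then $V_\kappa^{V[H_n]}=V_\kappa[H_n]\neq M$, so $\Cl_n$ is not the $V_{\kappa+1}$-model of lemma~\ref{le:inaccessibleKM}.

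The underlying problem is that your witnesses are assembled from set-sized pieces $g_{\alpha,n}\in M$. Any first-order property that singles them out can then also be used to collect them. The paper's proof instead makes the witnesses generic at $\kappa$ itself, without changing $V_\kappa$:
\begin{itemize}
\item It takes an $\omega$-sequence $\vec T$ of homogeneous $\kappa$-Suslin trees (lemma~\ref{le:trees}).
\item It codes only the \emph{trees} into the continuum pattern below $\kappa$, using an Easton product.
\item It then forces with the $\ltkappa$-distributive full-support product of the trees and passes to the symmetric model $N$, where $V_{\kappa+1}^N=\Union_n V_{\kappa+1}^{V[G_n]}$. So \KM\ comes directly from lemma~\ref{le:ZFmodelOfKM}, with no separate comprehension argument.
\end{itemize}
The formula is $\varphi(n,X)$: ``$X$ is a cofinal branch of $T_n$.'' Its witnesses are neither unique nor definable, so the definability of $\vec T$ does not collect them. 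The Mahlo hypothesis enters through lemmas~\ref{le:EastonProductKappaCC} and~\ref{le:MahloEastonProductNewSubsets}: the coding forcing is $\kappa$-cc and adds no branches, so the trees keep their properties.
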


\begin{theorem}
\emph{($\ZFC\,+\,$there is a Mahlo cardinal)} There is a model of $\KM$ in which the set-indexed class choice scheme holds, but the class choice scheme fails for a parameter-free first-order formula.
\end{theorem}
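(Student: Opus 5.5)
Our plan is to build the model by forcing over $L$ (or $L_\kappa$) with a Mahlo cardinal $\kappa$, using a symmetric-model construction in the style of the Feferman--Lévy model, and to take as classes the subsets of $V_\kappa$ lying in a carefully chosen intermediate model. Concretely, work in $L$ with $\kappa$ Mahlo, so that $\kappa$ is inaccessible and $\langle V_\kappa,\in,\mathcal P(V_\kappa)\rangle\models\KM$ (lemma~\ref{le:inaccessibleKM}). Force with a product $\prod_{\alpha}\P_\alpha$ whose factors add generic subsets, e.g.\ Cohen subsets $G_\alpha\subseteq\kappa$ or collapses at Mahlo-many inaccessibles below $\kappa$, indexed over a set of size $\kappa$, and arrange it so that the first-order part stays $L_\kappa$, or at least a fixed model of $\ZFC$. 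Then let $\mathcal S$ consist of the classes that are definable over, or belong to, a symmetric submodel $N\subseteq L[G]$ generated by the finite (or bounded) subproducts. The set-indexed choice scheme will come from a homogeneity and support argument for set-many supports, while full class choice should fail because any uniformizing class would need unboundedly many coordinates.

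The steps, in order, are as follows. First, define $N$ as the symmetric model for the automorphism group permuting coordinates, with the filter generated by the fixers of sets of fewer than $\kappa$ coordinates. Set $\mathcal S=\mathcal P(V_\kappa)\cap N$, where $V_\kappa$ is computed in $N$ and equals $L_\kappa[\text{bounded part}]$. Second, verify $\KM$: the first-order part satisfies $\ZFC$, class replacement holds because each class has a support of size $<\kappa$ and so lies in a ${<}\kappa$-generic extension where $\kappa$ is still inaccessible, and full comprehension follows from the symmetry lemma, since a definable subset over $\langle V_\kappa,\mathcal S\rangle$ is forced-definable from a hereditarily symmetric name and so is itself symmetric. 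Global choice should follow because $\kappa$ remains inaccessible in each ${<}\kappa$ extension, which gives a well-order of $V_\kappa$ of type $\kappa$ inside $N$. Third, prove the set-indexed class choice scheme: given $a\in V_\kappa$ and witnesses for each $x\in a$, choose in $N$'s ambient $L[G]$ symmetric names for them with supports $s_x$. The union of these ${<}\kappa$ many supports has size $<\kappa$ by regularity, so the glued class $Z$ is symmetric. The remaining work is to arrange, by homogeneity, that the witness choice is made in a canonical definable way, e.g.\ the $L$-least name with the least support.

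Fourth, produce the failure: take a parameter-free first-order $\varphi(\alpha,X)$ saying that ``$X$ is a generic object for the $\alpha$-th factor,'' or that ``$X$ codes a subset of $\alpha$ not constructible from $\ldots$'', indexed over $\alpha<\kappa$, i.e.\ over $\Ord$ in the model. Each instance has a witness $G_\alpha$, but a uniformizing $Z$ would need support of size $\kappa$ and so is not in $N$. I expect the main obstacle to be making $\varphi$ first-order and parameter-free while ensuring that \emph{no} uniformizer exists, rather than merely that the obvious one fails. The approach is an automorphism argument: any $Z$ has support $s$ with $|s|<\kappa$, so we pick $\alpha\notin s$ and an automorphism swapping the generics at $\alpha$ with a fresh coordinate; this fixes $Z$ but moves the property $\varphi(\alpha,Z_\alpha)$, a contradiction. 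The Mahlo hypothesis is used to get stationarily many inaccessibles, so that the supports stay bounded and so that the construction preserves inaccessibility of $\kappa$ in each bounded extension.
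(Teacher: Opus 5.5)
Your overall architecture matches the paper's: a symmetric submodel $N$ of a ${<}\kappa$-distributive product, classes $\mathcal P(V_\kappa)\cap N$, \KM\ for the resulting structure, and set-indexed choice from the regularity of $\kappa$ plus distributivity. The genuine gap is exactly where you anticipate it. You never produce a \emph{first-order, parameter-free} instance that fails, and none of your candidates works.
\begin{itemize}
\item For Cohen factors $\Add(\kappa,1)$, ``$X$ is $L$-generic for the $\alpha$-th factor'' is not first-order over $V_\kappa$. This poset is not $\kappa$-cc, so its dense sets and maximal antichains are classes of $L$, and quantifying over them (and recognizing them as constructible) is second-order.
\item With identical factors and a group permuting coordinates, no parameter-free $\varphi(\alpha,X)$ can single out coordinate $\alpha$. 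If $p\in G$ forces $\varphi(\check\alpha,\dot G_\alpha)$, swap $\alpha$ with a coordinate $\beta$ outside the support of $p$ whose generic filter contains $p(\alpha)$. This gives a condition in $G$ forcing $\varphi(\check\alpha,\dot G_\beta)$.
\item For the same reason your closing automorphism step yields no contradiction. Since $\varphi$ has no parameters and $\alpha$ is a check name, an automorphism fixing $\dot Z$ preserves the forcing of $\varphi(\check\alpha,\dot Z_\alpha)$, so nothing ``moves.'' What is actually needed is that no extension omitting coordinate $\alpha$ contains any witness for index $\alpha$.
\item Subsets of $\alpha<\kappa$ are sets and $V^N_\kappa=L_\kappa$, so ``$X$ codes a nonconstructible subset of $\alpha$'' has no witnesses.
\item Collapses at inaccessibles below $\kappa$ produce set-sized witnesses. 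In a \KM\ model, global choice and comprehension always uniformize those, so such factors could only break choice by breaking AC in $V^N_\kappa$, and with it \KM.
\end{itemize}

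The missing idea is to use factors whose generic objects are recognizable by a first-order formula and which are distinguished by a definable class. The paper takes a $\kappa$-sequence $\vec T=\langle T_\xi\mid\xi<\kappa\rangle$ of homogeneous $\kappa$-Suslin trees as in lemma~\ref{le:trees}. It forms the bounded-support product with coordinate-\emph{respecting} automorphisms, and lets $N$ have as sets of ordinals exactly those in some $V[G_\xi]$. Because each $T_\xi$ is $\kappa$-Suslin, its maximal antichains lie in $V_\kappa$ and every cofinal branch is generic, so ``$X$ is a branch of $T_\xi$'' is first-order in $\vec T$. Because each $\prod_{\eta<\xi}T_\eta$ is $\kappa$-cc, $V[G_\xi]$ has no branch of $T_\eta$ for $\eta\geq\xi$. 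Hence every tree has a branch in $\bigcup_{\xi<\kappa}V^{V[G_\xi]}_{\kappa+1}$, but no class selects one from every tree. The parameter $\vec T$ is then removed by first coding it into the continuum function on regular cardinals below $\kappa$, via an Easton-support product of $\Add(\alpha,\alpha^{++})$. That is where Mahloness is really used. It gives the $\kappa$-cc of the Easton product (lemma~\ref{le:EastonProductKappaCC}). Via lemma~\ref{le:MahloEastonProductNewSubsets}, a new subset of $\kappa$ yields a new subset of some inaccessible $\delta<\kappa$, so the coding adds no branches. Thus the trees stay Aronszajn, and the initial-segment products stay distributive and add no branches to later trees. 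Your stated uses of Mahlo (bounding supports, preserving inaccessibility) need only inaccessibility and do not address this.
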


\begin{theorem}\label{th:parameter-freeChoice}
\emph{($\ZFC\,+\,$there is an inaccessible cardinal)} There is a model of $\KM$ in which the parameter-free class choice scheme holds but the class choice scheme fails for a $\Pi^1_1$-formula.
\end{theorem}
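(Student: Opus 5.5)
The plan is a symmetric-model construction over $L$, in the spirit of the arithmetic counterexamples discussed in section~\ref{sec:soArithmetic}. The sets of the model will be $L_\kappa$, and the classes will be an increasing union of class-families, each of which is a $\KM$ model by itself.

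\textbf{The model.} Work in $L$ with $\kappa$ inaccessible. Let $G=\langle G_n\mid n<\omega\rangle$ be $L$-generic for the product $\P=\prod_{n<\omega}\Add(\kappa,1)$, taken with ${<}\kappa$-support or with full support, whichever makes the closure argument below cleanest. The forcing adds no new subsets of $L_\kappa$ of size less than $\kappa$, so $V_\kappa^{L[G]}=L_\kappa$ and $\kappa$ remains inaccessible. Define
$$\Cl=\bigcup_{n<\omega}\bigl(P(L_\kappa)\cap L[G\restrict n]\bigr)$$
and consider the structure $\langle L_\kappa,\in,\Cl\rangle$.

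\textbf{Verifying $\KM$.} Class replacement and global choice hold in every stage, because each $L_\kappa[G\restrict n]$ satisfies $\ZFC$ relative to its subsets of $L_\kappa$. The substantive axiom is full comprehension, and I would prove it by a symmetry argument. Let $\Gamma$ be the group of automorphisms of $\P$ induced by permutations of $\omega$, and by automorphisms of the individual factors, that fix the first $n$ coordinates.
\begin{itemize}
\item Every class in $\Cl$ has a $\P$-name that is invariant under such a $\Gamma$ for some $n$.
\item Hence satisfaction of any second-order formula over $\langle L_\kappa,\in,\Cl\rangle$, with parameters from $L[G\restrict n]$, is invariant under $\Gamma$.
\item Homogeneity of the tail forcing then shows that the class defined by such a formula lies in $L[G\restrict n]$.
\end{itemize}
This is the standard argument that a symmetric model satisfies the relevant comprehension. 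I would carry it out carefully, since $\Cl$ is not itself a class of $L[G]$ definable in an obvious way. I will handle this by defining $\Cl$ via its symmetric names.

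\textbf{Parameter-free class choice.} Suppose $\forall x\,\exists X\,\varphi(x,X)$ holds with $\varphi$ parameter-free. Fix $x$ and a witness $X\in L[G\restrict m]$. The product $\Add(\kappa,1)^m$ is isomorphic to $\Add(\kappa,1)$, so there is a coordinate automorphism of $\P$ sending $G\restrict m$ to a generic recoded on coordinate $0$. Because $\varphi$ has no parameters and the sets $x\in L_\kappa$ are fixed by every automorphism, I expect to show that $\varphi(x,X')$ holds for some $X'\in L[G_0]$. Concretely, the Boolean value of $\exists X\,\varphi(x,X)$ with $X$ restricted to names on coordinate $0$ should equal the unrestricted Boolean value.

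Then every instance has a witness in the single $\KM$ model $L_\kappa[G_0]$. Using the definable global well-order of $L[G_0]$ restricted to $P(L_\kappa)$, take the least witness for each $x$. The resulting $Z$ lies in $L[G_0]$ and hence in $\Cl$. One point needs care: whether $\varphi$ holds must be evaluated in $\Cl$, not in $L[G_0]$. So $Z$ has to be defined in $L[G]$ from the forcing relation, and it lands in $L[G_0]$ by the same symmetry argument.

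\textbf{Failure with a parameter.} Use the class parameter $A=G_0$, and let $\varphi(n,X,A)$ say roughly: ``$X$ codes $n+1$ mutually $\Add(\kappa,1)$-generic sets over the classes constructible from $A$.'' The genericity clause is a universal quantification over classes coding dense sets, which makes $\varphi$ a $\Pi^1_1$ formula. A witness exists for each $n$, namely $\langle G_1,\ldots,G_{n+1}\rangle$. But a uniformizing $Z$ would lie in some $L[G\restrict m]$. From $Z$ one can extract unboundedly many mutually generic sets whose generics are interlinked across the sections.

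The main obstacle is exactly here. $L[G\restrict m]$ already contains arbitrarily long finite sequences of mutually generic Cohen subsets, obtained by splitting $G_0$. So the formula must be tuned so that the sections of $Z$ cannot all live in one stage. I would first try requiring that each section $X$ code $G\restrict(n+1)$ relative to $A$, for instance $X_0=A$ together with a genericity and non-constructibility clause. Another option is to demand that $X$ not be constructible from $A$ together with any class of a prescribed rank.
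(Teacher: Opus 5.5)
There is a genuine gap, and it comes from your choice of forcing, not from the choice of $\varphi$. The structure $\langle L_\kappa,\in,\Cl\rangle$ built from $\prod_{n<\omega}\Add(\kappa,1)$ satisfies the \emph{full} class choice scheme, so no instance of any complexity fails there. The reason is that all stages beyond a parameter are interchangeable.

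Let $A\in L[G\restrict m]$ and suppose $\forall x\,\exists X\,\varphi(x,X,A)$.
\begin{itemize}
\item For $n\geq m+1$ we have $\Add(\kappa,1)^{n-m}\cong\Add(\kappa,1)^{n-m+1}$. So there is an automorphism of $\P$ that is the identity on the first $m$ coordinates, maps the block $[m,n)$ onto $[m,n+1)$, and shifts $[n,\omega)$ onto $[n+1,\omega)$.
\item This automorphism fixes $\dot A$. It also fixes $\dot\Cl$, because it merely reindexes a cofinal sequence of stages.
\item Coordinatewise homogeneity shows that ``some witness for $x$ lies in $L[G\restrict n]$'' is decided by conditions in $G\restrict m$, which the automorphism fixes. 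Hence a witness exists in $L[G\restrict n]$ if and only if one exists in $L[G\restrict(n+1)]$, and so every $x$ has a witness in $L[G\restrict(m+1)]$.
\item The tail forcing over $L[G\restrict(m+1)]$ is weakly homogeneous and fixes $\dot\Cl$. So the relation $\varphi(x,X,A)^{\Cl}$, for $X\in L[G\restrict(m+1)]$, is definable there by the tail forcing relation. Taking least witnesses gives a uniformizing $Z\in L[G\restrict(m+1)]\subseteq\Cl$.
\end{itemize}
This is just your parameter-free argument with $m$ in place of $0$. The ``main obstacle'' you flag therefore cannot be removed by any choice of formula or parameter.

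What is missing is a tower of stages that are distinguishable inside the model and long enough. The paper forces over $L$ with the bounded-support product $\prod_{\xi<\kappa^+}\Coll(\kappa,\kappa^{(+\xi)})$. It takes the symmetric model $N$ whose sets of ordinals come from proper initial segments, and uses $\langle V_\kappa,\in,V_{\kappa+1}^N\rangle$.
\begin{itemize}
\item \emph{Failure with a parameter.} Stage $\xi$ is recognizable because it collapses $\kappa^{(+\xi)}$, which is $\Pi^1_1$-expressible via classes coding $L_{\kappa^{(+\alpha)}}$. A parameter $<^*$ well-ordering $\kappa$ in type $(\kappa^+)^L$ lets $\kappa$ many set indices demand such codes for cofinally many stages, so no class collects them.
\item \emph{Parameter-free choice.} Homogeneity makes the stage $\xi_x$ carrying witnesses for $x$ decided by $\one$. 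There are $\kappa^+$ stages and only $\kappa$ indices, so the $\xi_x$ are bounded by some $\xi$. Later stages collapse $V_{\kappa+1}^{L[G_\xi]}$ to size $\kappa$ in $N$, so a single class codes it and supplies the collecting class.
\end{itemize}
Your construction lacks distinguishable stages, which are needed for the failure. Once stages are made distinguishable, it would also need more than $\kappa$ of them, with earlier stages collapsed later, to keep the parameter-free half.
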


\noindent Theorem~\ref{th:parameter-freeChoice} and its proof are motivated by the work in~\cite{guzicki:choiceScheme}.

These failures of $\KM$ to prove even the weakest instances of the class choice scheme indicates a fundamental weakness in the theory and might lead us to reconsider its prominent role in second-order set theory. To highlight the weakness, we show that the \Los\ theorem scheme can fail for the internal second-order ultrapower of the $\KM$ universe. 

\begin{theorem}
\emph{($\ZFC\,+\,$there is an inaccessible cardinal)} There is a model of $\KM$ whose internal second-order ultrapower by an ultrafilter on $\omega$ is not itself a $\KM$-model.
\end{theorem}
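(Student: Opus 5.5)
We plan to reduce the failure of $\KM$ in the ultrapower to the failure of an $\omega$-indexed instance of class choice in the original model, as in the first theorem above. We will use a model $\<V_\kappa,\in,\Cl>$ of $\KM$, with $\kappa$ inaccessible, in which some instance
$$\forall n{\in}\omega\,\exists X\,\varphi(n,X)\rightarrow\exists Z\,\forall n{\in}\omega\,\varphi(n,Z_n)$$
fails, and where moreover the witnesses are unique and increasing in complexity. The natural candidate is a symmetric or forcing-style model, for instance $\Cl$ consisting of the classes definable over some structure $\<V_\kappa,\in,A_n>_n$ where each $A_n$ is available as a class but the sequence $\<A_n\mid n<\omega>$ is not, and where the classes are closed under second-order comprehension. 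We would construct it by adding $\omega$ many mutually generic class Cohen subsets of $\kappa$ (or in $V_\kappa$) and taking the union over $n$ of the $\KM$-closure of $V_\kappa[G_0,\ldots,G_{n-1}]$, that is, the classes in $\bigcup_n \mathcal P(V_\kappa)\cap L_{\alpha}(V_\kappa)[G\restrict n]$ for a suitable large $\alpha$. We will verify $\KM$ by showing that every second-order formula is evaluated correctly inside each finite stage, using homogeneity of the remaining factors. We will also verify that no class codes the full sequence $\<G_n\mid n<\omega>$, by genericity, while each $G_n$ is characterized by a first-order property with parameter $n$ (for instance, being the $n$-th generic relative to a fixed definable enumeration).

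Next we will take a nonprincipal ultrafilter $U$ on $\omega$, which is a set in $V_\kappa$, and form the internal second-order ultrapower. Its sets are the equivalence classes of functions $f:\omega\to V$ in $V$, and its classes are the equivalence classes of classes $Z\of\omega\times V$, with $Z$ representing the class $[Z]$ whose members are those $[f]$ with $\set{n\mid f(n)\in Z_n}\in U$. The key step is that the \Los\ theorem holds for first-order formulas with class parameters, since the sets form a model of $\ZFC$ with the class predicates and $V$ has global choice. It fails at the second-order quantifier, however, exactly because witnesses cannot be chosen uniformly into a single $Z$.

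The main obstacle is showing that comprehension actually fails in the ultrapower. We will take the nonstandard index $[\mathrm{id}]$ and the class $\mathcal{G}$ in the ultrapower represented by $Z=\set{(n,y)\mid y\in G_n}$ if such a $Z$ existed. Since it does not, we will instead consider the second-order definable collection ``the unique class satisfying $\varphi([\mathrm{id}],X)$'' and argue that in the ultrapower $\exists X\,\varphi([\mathrm{id}],X)$ must hold for comprehension (applied to a second-order definition uniform in $n$, which $\KM$ would give in the ultrapower) but no class $[Z]$ witnesses it. Otherwise, by first-order \Los\ for $\varphi$, the set of $n$ with $\varphi(n,Z_n)$ would lie in $U$, and patching finitely or $U$-null many sections in $V$ would give a $Z'$ uniformizing $\varphi$, contradicting the failure of class choice. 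So we will need an ultrapower-definable property of $[\mathrm{id}]$ for which \KM\ comprehension in the ultrapower proves that a witness exists. We would try to arrange $\varphi(n,X)$ to say that ``$X$ is the $\Sigma^1_n$-type truth predicate'' or ``$X$ is the iterate $n$-fold of a class operation,'' so that the existence of a witness at every index, including nonstandard ones, follows from $\KM$ by a class-recursion argument along the ultrapower's $\omega$. Checking that this recursion is internal to $\KM$ (class recursion of length $\omega$, which $\KM$ proves) but that its stages in $V$ are exactly the non-uniformizable $G_n$ is the delicate point of the argument.
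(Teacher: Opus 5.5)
\textbf{There is a genuine gap, at exactly the step you flag as delicate.} Two of your design choices are incompatible with the failure of class choice in the original model.

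First, you require the witnesses to be unique, with each $G_n$ ``characterized'' by a property with parameter $n$. But if $\forall n{\in}\omega\,\exists! X\,\varphi(n,X)$ holds in a \KM-model, then comprehension already yields $Z=\set{(n,y)\mid \exists X\,(\varphi(n,X)\wedge y\in X)}$, which uniformizes $\varphi$. This is exactly how \KM\ collects the $\Sigma^0_n$-truth predicates. Second, any $\omega$-length class recursion that \KM\ proves to exist will also exist in $\mathcal M$, since $\mathcal M\models\KM$. So its stages cannot be the non-uniformizable objects.

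Your patching step is also not free: a $U$-null set may be infinite, and filling in witnesses on it is itself an instance of class choice. Finally, your Cohen construction supplies no evident first-order property whose witnesses cannot be collected. Genericity over $V$ is not expressible over $\<V_\kappa,\in,\Cl>$, and the Cohen factors are interchangeable. You need rigid factors, like the paper's homogeneous Suslin trees: there the $n$-th factor adds a branch to $T_n$ and to no other tree, so ``$X$ is a branch of $T_n$'' is first-order in the class parameter $\vec T$.

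\textbf{The missing idea is to use a non-unique, downward-closed first-order property together with second-order induction, not recursion.} This is the paper's argument, run in the model of Theorem~\ref{th:firstOrderParameterFailure}.
\begin{itemize}
\item Let $\varphi(n,X,\vec T)$ say that $X$ codes branches through the first $n$ trees.
\item For each standard $n$, $\exists X\,\varphi(n,X,\vec T)$ transfers to $\Ult(\mathcal M,U)$ by Lemma~\ref{le:first-orderLos}.
\item If the ultrapower satisfied \KM, comprehension would make $\set{m\in\omega\mid\exists X\,\varphi(m,X,[c_{\vec T}]_U)}$ a class containing every standard $m$. Second-order induction forbids it from being exactly the standard cut, so some nonstandard $[f]_U$ has a witness $[F]_U$. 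It need not be $[\mathrm{id}]_U$, and that does not matter.
\item Since $\varphi$ is first-order (which is why a $\Pi^1_1$ instance would not do), Lemma~\ref{le:first-orderLos} gives $\varphi(f(n),F(n),\vec T)$ for $U$-almost every $n$, and $f$ is unbounded on that set.
\item Monotonicity of $\varphi$ then lets you read a branch of each $T_k$ off $F(n)$ for the least suitable $n$, with no patching needed. This collects branches for all trees in $\mathcal M$, a contradiction.
\end{itemize}
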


Similarly, we show that \KM\ does not prove that large cardinal embeddings, such as the ultrapower of the universe by a normal measure on a measurable cardinal, are elementary in the language of \KM. 

\begin{theorem}
\emph{($\ZFC\,+\,$measurable cardinal with an inaccessible above)} There is a model of \KM\ with a measurable cardinal $\delta$, such that the internal ultrapower of the universe by a normal measure on $\delta$ is not elementary in the language of \KM\ set theory.
\end{theorem}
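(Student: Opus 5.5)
Here is the plan. We will build a $\KM$ model with a measurable cardinal $\delta$ in which the set-indexed class choice scheme fails for an index set of size $\delta$. Then we will show that this failure breaks elementarity of $j:V\to M=\Ult(V,U)$, where $j$ is extended to classes and $M$'s classes are the $U$-classes of functions on $\delta$.

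\textbf{The model.} Start with $\kappa$ inaccessible and $\delta<\kappa$ measurable with normal measure $U$. Take the natural $\KM$ model $\<V_\kappa,\in,V_{\kappa+1}\>$, which satisfies full class choice. To kill choice we pass to a symmetric extension, as in the $\omega$-indexed failure theorem stated earlier.

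Force with a product $\P=\prod_{\alpha<\delta}\Q_\alpha$, for instance adding Cohen-type subclasses of $\kappa$ (forcing with conditions of size $<\kappa$, which is $\lt\kappa$-closed so $V_\kappa$ is unchanged). This gives generic classes $G_\alpha$ for $\alpha<\delta$. Let $\Cl$ consist of those classes of $V_\kappa[\ldots]=V_\kappa$ that are hereditarily symmetric under permutations within each coordinate, with supports of size $<\delta$. Then every class is definable from $<\delta$ many of the $G_\alpha$'s (or finitely many, if we use finite supports).

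The things to verify are:
\begin{itemize}
\item $\<V_\kappa,\in,\Cl\>\satisfies\KM$, since full comprehension holds by the symmetric forcing theorem for the definable-class family;
\item for each $\alpha<\delta$ there is a class $X$ that is $\Q_\alpha$-generic (a first-order property of $X$ relative to the set parameter $\alpha$);
\item there is no class $Z$ with every $Z_\alpha$ generic for coordinate $\alpha$, because any class has small support and so cannot mention all $\delta$ many generic choices uniformly.
\end{itemize}
Take supports of size $<\delta$. Then $\{\alpha<\delta : \varphi(\alpha,Z_\alpha)\}$ misses some $\alpha$ outside the support, by an automorphism argument applied to $Z_\alpha$. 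Measurability of $\delta$ and the ultrafilter $U$ survive, because the forcing adds no sets below $\kappa$.

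\textbf{Failure of elementarity.} In $V$ we have $\forall\alpha<\delta\,\exists X\,\varphi(\alpha,X)$. By elementarity, if $j$ were elementary for the second-order language, then $M$ would satisfy $\forall\alpha<j(\delta)\,\exists X\,\varphi(\alpha,X)$, in particular at $\alpha=\delta$. A class of $M$ is $[F]_U$ for a codable family $F=\<F_\xi:\xi<\delta\>$, i.e.\ a single class $Z$ with $F_\xi=Z_\xi$.

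Evaluate the statement at $\delta=[\mathrm{id}]$. If $M\satisfies\varphi(\delta,[Z])$ for first-order $\varphi$, then the first-order \Los\ theorem relativized to class predicates (which is provable in $\GBC$ for first-order formulas) gives $\varphi(\xi,Z_\xi)$ for $U$-almost all $\xi$. Thinning further, we would get a single class uniformizing witnesses on a measure-one set $S\subseteq\delta$.

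So we need the failure in a stronger form: no class $Z$ satisfies $\varphi(\xi,Z_\xi)$ on any unbounded (or any size $\delta$) set of $\xi$. The small-support argument gives exactly this, since a support of size $<\delta$ is bounded in $\delta$. Hence $M\satisfies\neg\exists X\,\varphi(\delta,X)$ while $V$ satisfies the corresponding $\forall\alpha<\delta$ statement, and so $j$ is not elementary for $\KM$-formulas.

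\textbf{Main obstacle.} The hardest step is verifying that the symmetric class collection satisfies full second-order comprehension while keeping supports of size $<\delta$. This needs a forcing theorem for second-order formulas over the hyperclass of symmetric names, and the product must be homogeneous enough for the automorphism argument. I would first try the approach used for the $\omega$-indexed failure theorem: take $\Cl$ to be the classes definable over $V_\kappa[G\restrict s]$ for $s\in[\delta]^{\lt\delta}$, using $\lt\kappa$-closed Cohen forcing on $\kappa$. Comprehension then reduces to the observation that these are exactly the classes in the union of the natural $\KM$ models $V_{\kappa+1}^{V[G\restrict s]}$, which is a directed union since $\delta$ is regular.
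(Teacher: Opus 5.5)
Your overall architecture is the paper's. You produce a \KM\ model with a measurable $\delta$ in which every $\alpha<\delta$ has a class witness for a \emph{first-order} $\varphi(\alpha,X,A)$, but no class $Z$ satisfies $\varphi(\xi,Z_\xi,A)$ for $U$-almost every $\xi$. You then apply the first-order \Los\ lemma at $[\mathrm{id}]_U=\delta$. The gap is the concrete instance you chose.

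The formula ``$X$ is $\Q_\alpha$-generic'' is not a first-order property of $X$ relative to the set parameter $\alpha$. Suppose a $V$-generic $X\subseteq\kappa$ satisfies a first-order $\varphi(\alpha,X)$ over $\langle V_\kappa,\in,X\rangle$. Then some condition forces this. Using ${<}\kappa$-closure, you can build in $V$ a set $X'$ below that condition that meets the $\kappa$ many dense sets deciding all instances of subformulas with parameters in $V_\kappa$. This $X'\in V$ satisfies $\varphi(\alpha,X')$ but is not generic. The underlying problem is that genericity over the ground refers to the $2^\kappa$ many ground-model dense subclasses, which no class of the model can list. Your \Los\ step really does need $\varphi$ to be first-order: for $\Pi^1_1$ formulas, the passage from the ultrapower back to $U$-almost every coordinate itself requires collecting counterexample classes, i.e.\ class choice.

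Worse, even read externally, the failure you claim is false for a Cohen product. Since $\kappa^{<\kappa}=\kappa$ and $\delta<\kappa$, we have $\Add(\kappa,1)\cong\Add(\kappa,1)^\delta$. So the single coordinate $G_0$ already yields a class $Z\in V_{\kappa+1}^{V[G_0]}\of\Cl$ all of whose sections $Z_\alpha$ are $\Add(\kappa,1)$-generic over $V$. Your support argument only shows $Z_\alpha\neq G_\alpha$, which no formula of the model can express.

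The missing idea is to use generic objects that are first-order recognizable from a class parameter and that the other coordinates cannot add. The paper first forces a sequence $\vec T=\langle T_\xi\mid\xi<\delta\rangle$ of homogeneous $\kappa$-Suslin trees, adding no subsets of $\delta$, so $\delta$ stays measurable. Every product $\Pi_{\xi<\eta}T_\xi$ has the $\kappa$-cc and is ${<}\kappa$-distributive, hence adds no branch to any later $T_\xi$ (lemma~\ref{le:trees}). The symmetric model $N$ of the full-support product $\Pi_{\xi<\delta}T_\xi$ then gives $V_{\kappa+1}^N=\Union_{\eta<\delta}V_{\kappa+1}^{V[G_\eta]}$, and ``$X$ is a branch through $T_\xi$'' is $\Pi^1_0$ in $\vec T$.

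With this substitution your argument goes through. A witness $[Z]_U$ for the tree $j(\vec T)_\delta$ gives branches $Z_\xi$ through $T_\xi$ for $U$-almost every, hence unboundedly many, $\xi<\delta$. This is impossible, since $Z\in V[G_\eta]$ for some $\eta<\delta$. The paper instead uses ``$X$ codes branches through the first $\eta$ trees'' at $\eta=\delta$, where $j(\vec T)\restrict\delta=\vec T$; the two are interchangeable.

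Finally, \KM\ for the symmetric family does not follow from its being a directed union of \KM\ families, since second-order truth is not preserved by such unions. You also do not need a second-order symmetric forcing theorem. Instead, apply lemma~\ref{le:ZFmodelOfKM} inside $N\satisfies\ZF$, where $\kappa$ is still inaccessible and $V_\kappa=V_\kappa^V$ is well-orderable.
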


We also show that $\KM$ 
does not prove that $\Sigma^1_1$ formulas are (up to equivalence) closed under first-order quantifiers, even bounded first-order quantifiers. 

\begin{theorem}
\emph{($\ZFC\,+\,$measurable cardinal with an inaccessible above)} The theory $\KM$ fails to establish that $\forall \eta{<}\delta\,\psi(\eta,X)$ is provably equivalent to some $\Sigma^1_1$ formula whenever $\psi$ is.
\end{theorem}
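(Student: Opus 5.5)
Fix a measurable cardinal $\delta$ with a normal measure $U$ and an inaccessible $\kappa>\delta$. We will build a \KM\ model $\<V_\kappa,\in,\Cl>$ with $U\in V_\kappa$ in which a suitable instance of class choice indexed by $\delta$ fails. We then transfer this failure to the logical-complexity question through the \Los\ connection.

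The basic observation is this. Suppose $\psi(\eta,X)$ is $\Sigma^1_1$, say $\exists Y\,\theta(\eta,X,Y)$. Suppose also that \KM\ proved $\forall\eta{<}\delta\,\psi(\eta,X)$ equivalent to some $\Sigma^1_1$ formula $\exists Y\,\chi(X,Y)$, with $\chi$ first-order in class parameters. Then a single class $Y$ would witness the universal statement. Combined with the class choice instance $\forall\eta{<}\delta\,\exists Y\,\theta(\eta,X,Y)\to\exists Z\,\forall\eta{<}\delta\,\theta(\eta,X,Z_\eta)$, this is essentially what we want to refute.

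So the strategy is to construct a \KM\ model in which, for some first-order $\theta$:
\begin{itemize}
\item for every $\eta<\delta$ there is a class $Y$ with $\theta(\eta,Y)$;
\item no class $Z$ satisfies $\theta(\eta,Z_\eta)$ for all $\eta<\delta$.
\end{itemize}
We must also arrange that no other $\Sigma^1_1$ formula can capture the universal statement. To get this, we apply the same construction in two models, $M_1$ and $M_2$, that agree on all $\Sigma^1_1$ facts in the relevant sense. In $M_1$ class choice for $\theta$ holds, so $\forall\eta{<}\delta\,\psi$ is true there. In $M_2$ the universal statement is still true but is not witnessed uniformly. The cleanest route may instead be to work inside one model and use upward absoluteness of $\Sigma^1_1$ statements between nested class families.

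Concretely, I would build the model by the symmetric-extension method presumably used for the earlier theorems. Force over $V_\kappa$ to add a $\delta$-sequence of generic classes $G_\eta$, for instance Cohen subclasses of $\kappa$ or of $\Ord$. Then take $\Cl$ to be the classes definable (in second order) from $V_\kappa$ together with finitely many, or fewer than $\delta$ many, of the $G_\eta$, symmetrized so that the full sequence $\<G_\eta\mid\eta<\delta>$ is not in $\Cl$. The formula $\theta(\eta,Y)$ should say that $Y$ is $\eta$-th-coordinate generic, which is first-order expressible if the $G_\eta$ are added by coding into, say, $\Ord$-indexed patterns on the sets $V_\alpha$. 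The key verification is that $\<V_\kappa,\in,\Cl>$ satisfies \KM, which requires a symmetry argument showing that second-order comprehension only produces classes with small support. Here $\delta$ being measurable, hence regular and uncountable, ensures that supports of size less than $\delta$ are closed enough. The main obstacle is exactly this step: proving full comprehension while excluding any class that codes all $\delta$ witnesses. I would try a homogeneity/automorphism argument on the product forcing, showing that the forcing relation for second-order formulas with support $s$ is itself decided by conditions restricted to $s$.

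To show that no $\Sigma^1_1$ equivalent exists, suppose toward contradiction that \KM\ proves $\forall\eta{<}\delta\,\psi(\eta,X)\leftrightarrow\exists Y\chi(X,Y)$. Apply the ultrapower by $U$: in the model, $\forall\eta{<}\delta\,\psi$ holds. Using normality, $\exists Y\chi$ holds, and by the \Los\ theorem for $\Sigma^1_1$ formulas (which holds without choice) this yields $\psi(\delta,j(X))$ in the ultrapower. I would then try to derive from this a uniform witness sequence, obtained as the class $[\eta\mapsto Y_\eta]$ represented via $Y$ and $U$, contradicting the failure of class choice established above. I expect this final connection to need care and might instead just use the direct contradiction with class choice from the previous paragraph.
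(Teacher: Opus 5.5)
\textbf{There is a genuine gap.} The central mechanism you propose cannot refute a provable equivalence. Suppose \KM\ proved $\forall\eta{<}\delta\,\exists X\,\theta(\eta,X)\leftrightarrow\exists Y\,\chi(Y)$. From this you learn only that some class satisfies the arbitrary first-order $\chi$, which yields no uniformizing $Z$. So a single model in which $\forall\eta{<}\delta\,\exists X\,\theta$ holds while class choice fails is perfectly compatible with such an equivalence, and the ``direct contradiction with class choice'' you fall back on does not exist.

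Your alternatives fail for related reasons. In the two-model plan the universal statement is true in both $M_1$ and $M_2$, so nothing is contradicted. Between nested class families over the same sets, $\forall\eta{<}\delta\,\exists X\,\theta$ with $\theta$ first-order is already upward absolute, just like a $\Sigma^1_1$ statement, so nesting cannot separate the two.

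What is needed is a $\Sigma^1_1$-elementary map between two \KM\ models that fails to preserve the $\forall\eta{<}\delta\,\Sigma^1_1$ statement because it moves $\delta$. The paper gets this directly from the preceding theorem:
\begin{itemize}
\item the internal ultrapower map $j$ by the normal measure is $\Sigma^1_1$-elementary by Lemma~\ref{le:first-orderLos};
\item $\mathcal M\models\forall\eta{<}\delta\,\exists X\,\theta(\eta,X,\vec T)$, where $\theta$ says $X$ codes branches through the first $\eta$ trees;
\item $\mathcal W=\Ult(\mathcal M,U)$ fails this at $\eta=\delta<j(\delta)$. A witness $[F]_U$ would give $U$-many, hence unboundedly many, $\alpha<\delta$ with $F(\alpha)$ coding branches through the first $\alpha$ trees, and from these one assembles branches through every $T_\xi$, $\xi<\delta$.
\end{itemize}

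Your last paragraph points at the right target, $\psi(\delta,j(X))$ in the ultrapower, but it is missing two ingredients.

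First, to apply the hypothesized \KM-provable equivalence inside the ultrapower, you need $\Ult(\mathcal M,U)\models\KM$. This is not automatic: Theorem~\ref{thm:ultrapowerOfKMModelIsNotKMMOdel} shows internal ultrapowers of \KM\ models can fail \KM. The paper proves it for its model by lifting $j$ through the $\leqdelta$-distributive product $\P$ and identifying $\mathcal W$ with $\langle V_\kappa^M,\in,V_{\kappa+1}^{\bar N}\rangle$ for the analogous symmetric model $\bar N\subseteq M[H]$.

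Second, $\theta$ must be cumulative, so that a witness at index $\alpha$ codes witnesses for all $\xi<\alpha$. Otherwise $U$-many witnesses at $\eta=\delta$ give no full uniformization.

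Your proposed construction also does not supply a usable model. Genericity over the ground model is not a first-order property; it quantifies over ground-model dense classes. Nothing ties a Cohen witness to its coordinate either: a single Cohen subclass of $\kappa$ splits along a bijection $\kappa\cong\delta\times\kappa$ into $\delta$ mutually generic ones. The paper instead uses $\delta$ homogeneous $\kappa$-Suslin trees whose initial-segment products are $\kappa$-cc and $\ltkappa$-distributive, so each branch belongs to a specific tree. Then \KM\ for $\langle V_\kappa,\in,\bigcup_{\eta<\delta}V_{\kappa+1}^{V[G_\eta]}\rangle$ follows at once from Lemma~\ref{le:ZFmodelOfKM}, with no second-order symmetry analysis.

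Finally, the \Los\ theorem for $\Sigma^1_1$ formulas with arbitrary representing functions does not hold without set-indexed class choice. Only the $\Sigma^1_1$-elementarity of $j$ on constant functions holds outright, and that is all the argument needs.
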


In every case, these various deficiencies of $\KM$ are remedied by replacing it with the theory which we call $\KM^+$, which augments \KM\ with the class choice scheme. A result of Marek shows that $\KM^+$ is bi-interpretable with the first-order theory $\ZFC^-$, that is, $\ZFC$ without the powerset axiom,\!\footnote{More precisely $\ZFC^-$ consists of the axioms of $\ZFC$ without the powerset axiom, with the collection scheme instead of the replacement scheme, and with the well-ordering principle instead of the axiom of choice. See \cite{GitmanHamkinsJohnstone2016:WhatIsTheTheoryZFC-Powerset?} for details.}
together with the assertion that there is a largest cardinal that is furthermore an inaccessible cardinal \cite{marek:phd}.
\section{About Kelley-Morse set theory}\label{sec:KM}
The first axiomatic foundation for second-order set theory, the \Godel-Bernays axioms $\GBC$, arose out of the work of Bernays, \Godel\, and Von Neumman in the 1930s, and essentially codified the basic requirements outlined in the introduction. It consisted of the $\ZFC$ axioms for sets, class replacement, and first-order comprehension. Wang and Morse first considered adding second-order comprehension to $\GBC$, and the resulting theory, which appeared in Kelley's text on general topology \cite{kelley:topology}, became known as the Kelley-Morse axioms. Below we give a precise description of the axiomatizations.

In what follows, we will use the abbreviation $V$ for the class of all sets in a model of a sufficiently strong second-order set theory.
\begin{definition}\
\begin{itemize}
\item Let \textbf{Set} be the following collection of axioms for sets in the language of second-order set theory: extensionality, foundation, emptyset,  infinity, powerset, union.
\item The \textbf{extensionality} axiom asserts that any two classes having the same elements are equal.
\item The \textbf{replacement} axiom asserts that whenever $F$ is a function and $a$ is a set, then $F\image a$ is a set.
\item The \textbf{global choice} axiom asserts that there is a global choice function $C:V\smallsetminus\set{\emptyset}\to V$ such that for every nonempty set $a$, $C(a)\in a$.
\item Let $\Gamma$ be some collection of formulas in the language of second-order set theory. The \textbf{comprehension} axiom scheme for $\Gamma$ asserts that any collection of sets definable by a formula in $\Gamma$ with class parameters is a class.
\end{itemize}
\end{definition}
\noindent In its modern formulation, $\GBC$ consists of \textbf{Set}, \textbf{extensionality}, \textbf{replacement}, \textbf{global choice}, and \textbf{comprehension} for first-order formulas. The theory $\KM$ consists of $\GBC$ augmented by full second-order \textbf{comprehension}. Note that the axioms of \textbf{Set} together with \textbf{global choice} and \textbf{replacement} guarantee that $V$ satisfies $\ZFC$ even in the expanded language with predicates for finitely many classes. Note also that \textbf{global choice} is equivalent over the remaining axioms of $\GBC$ to the existence of a well-ordering of $V$ and indeed to the existence of an $\Ord$-enumeration of $V$.

The constructible universe $L$ together with its definable collections is a model of $\GBC$, demonstrating the equiconsistency of $\GBC$ and $\ZFC$. More so $\GBC$ is conservative over $\ZFC$ for first-order assertions, meaning that any property of sets provable in $\GBC$ is already provable in $\ZFC$. Meanwhile, the more powerful comprehension scheme of \KM\ gives it considerably greater strength. The theory \KM\ proves that there is a transitive model of $\ZFC$, indeed that there are many such transitive models---it proves that the first-order universe $V$ is the union of an elementary chain of $V_\delta$ for a club class of cardinals $\delta$. These cardinals will have $V_\delta\satisfies\ZFC$ and so they are worldly, but being in an elementary chain, they will also be otherworldly in the sense of \cite{Hamkins2020:Otherworldly-cardinals} and indeed totally otherworldly and much more. Meanwhile, if $\kappa$ is an inaccessible cardinal, then $\<V_\kappa,\in,V_{\kappa+1}>$ is a model of \KM. So the consistency strength of \KM\ is strictly below \ZFC\ plus one inaccessible cardinal, but strictly above \ZFC\ plus a proper club class of totally otherworldly cardinals.

For completeness, let us provide the elementary chain argument, which provides the proper class of otherworldly cardinals (see also  \cite{MarekMostowski:onExtendibilityOfZFModels}). Central to this argument is the notion of a first-order \emph{truth predicate} class in a model of \KM.
Given a model $\V=\<V,\in,\Cl>$ of \KM, we define that a class $\Tr\in \Cl$ is a first-order \emph{truth predicate} if it is a collection of \Godel\ codes $\ulcorner \varphi(\overline a)\urcorner$ with set parameters $\overline a$ such that all atomic truths of $V$ are in $\Tr$ and $\V$ thinks that $\Tr$ obeys the recursive Tarskian truth conditions for Boolean combinations and first-order quantification. By Tarski's theorem on the undefinability of truth, such a collection $\Tr$ can never be first-order definable over $V$. But indeed, we show below that every model of $\KM$ has a first-order truth predicate class. Note also that if $V$ has a nonstandard $\omega$, then the truth predicate $\Tr$ must provide a coherent conception of truth even for nonstandard \Godel\ codes, obeying the Tarski recursion for standard and nonstandard formulas. It is easy to see by meta-theoretic induction that every truth predicate will agree with actual truth on all standard formulas.
\begin{lemma}
If $\V=\<V,\in,\Cl>\models\KM$, then $\Cl$ contains a first-order truth predicate class $\Tr$.
\end{lemma}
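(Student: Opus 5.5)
The plan is to build the truth predicate in stages $\Tr_n$ for $\Sigma_n$ formulas and then use second-order comprehension to collect them, with the subtlety that $\omega$ may be nonstandard. So I will not work formula by formula in the metatheory. Instead I will define, inside $\V$, the notion of a \emph{partial truth predicate}: a class $S$ together with a set $d\subseteq\omega$ of (possibly nonstandard) \Godel\ codes of formulas, closed under subformulas, such that $S$ consists of pairs $\ulcorner\varphi(\overline a)\urcorner$ with $\varphi\in d$, contains exactly the true atomic instances, and obeys the Tarskian recursion for every formula in $d$ and all set parameters. This is a single second-order formula $P(d,S)$ with class variable $S$. The desired class is then
$$\Tr=\set{\ulcorner\varphi(\overline a)\urcorner\mid \exists d\,\exists S\,\bigl(P(d,S)\wedge\varphi\in d\wedge \ulcorner\varphi(\overline a)\urcorner\in S\bigr)},$$
which exists by full comprehension in \KM, since the defining formula is $\Sigma^1_1$.

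The key steps, in order, are as follows. First I will prove uniqueness: if $P(d,S)$ and $P(d',S')$, then $S$ and $S'$ agree on all formulas in $d\cap d'$. This is an induction on formula complexity inside $\V$. The set of codes $\varphi\in d\cap d'$ on which $S$ and $S'$ disagree for some parameters is a set, by separation with the class parameters $S,S'$, and it has no minimal element because the Tarski clauses propagate agreement upward. Second I will prove existence: for every formula code $\varphi$ there are $d\ni\varphi$ and $S$ with $P(d,S)$. Once both facts are in hand, $\Tr$ agrees with each partial predicate, so it satisfies the Tarski conditions for every code, and it contains all atomic truths.

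The existence step is the main obstacle. It must be carried out by induction on $n\in\omega$ inside $\V$ for arbitrary (possibly nonstandard) $n$, so I cannot rely on metatheoretic definability of $\Sigma_n$ truth. My approach is to show that the class of $n$ such that some $S$ satisfies $P(d_n,S)$ is a set, where $d_n$ is the set of formulas of complexity at most $n$. This class exists by comprehension, being defined by a $\Sigma^1_1$ formula. For the successor step, given $S$ for $d_n$, I define $S'$ for $d_{n+1}$ by first-order comprehension in the parameter $S$, using the Tarski clause for $\neg$, $\wedge$ and $\exists$ applied to $S$. The base case $n=0$ is handled by first-order comprehension for atomic formulas.

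So this set of $n$ contains $0$ and is closed under successor, and hence it equals $\omega$ by induction on the set $\omega$ in $V$. This is legitimate even if $\omega$ is nonstandard, because it is genuine set induction applied to a set obtained by separation from a class. Comprehension is used twice here: to form the set of good $n$ and to form $\Tr$.
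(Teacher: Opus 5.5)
Your proposal is correct and follows essentially the same route as the paper. You build partial truth predicates level by level, use \KM\ comprehension so that ``there is a level-$n$ truth predicate'' can be inducted on over a possibly nonstandard $\omega$, get uniqueness from the no-least-disagreement argument, and collect the pieces into $\Tr$ by second-order comprehension. The only cosmetic difference is that you stratify by formula depth starting from atomic formulas, whereas the paper stratifies by $\Sigma^0_n$ starting from the definable $\Sigma^0_0$-truth predicate.
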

\begin{proof}
Since $\Sigma_0^0$-truth is definable, $\Cl$ contains a $\Sigma_0^0$-truth predicate and, since a $\Sigma_{n+1}^0$-truth predicate is definable from a $\Sigma_n^0$-truth predicate for any $n\in\omega$ (whether $n$ is standard or not), it follows that if $\Cl$ contains a $\Sigma_n^0$-truth predicate, then it must contain a $\Sigma_{n+1}^0$-truth predicate. Now we apply induction to the formula asserting that there is a $\Sigma_n^0$-truth predicate and obtain that there is a $\Sigma_n^0$-truth predicate for every $n\in\omega$. Note that induction for second-order assertions follows from the comprehension scheme of $\KM$. The $\Sigma_n^0$-truth predicate, if it exists, is unique, since there can be no smallest formula for which the disagreement arises, and therefore we can use the $\Sigma_n^0$-truth predicates to define $\Tr$, which is then a class by comprehension.
\end{proof}

The argument easily adapts to class parameters, and so every model of $\KM$ has truth predicates for $V$ relativized to any finite number of classes. As we mentioned in the introduction, the proof does not work in $\GBC$, first because we cannot apply induction to a second-order assertion (so we don't necessarily get $\Sigma^0_n$ truth predicates for nonstandard $n$) and second because even if $\Sigma_n^0$-truth predicates exist for every $n\in\omega$, we cannot unify them into a single class without second-order comprehension. As we mentioned, if $M$ is a transitive model of $\ZFC$ with a definable well-ordering of the universe, then augmenting $M$ with its definable classes is a model of $\GBC$ that has $\Sigma_n^0$-truth predicates for every $n\in\omega$ (because it has a standard $\omega$ and $\Sigma_n^0$-truth is definable), but by Tarski's theorem on the non-definability of truth it will not have a class that is a first-order truth predicate.

Let us show next that whenever one has a truth predicate for first-order truth in a model of \KM, then the truth predicate will declare even the nonstandard axioms of \ZFC\ to be true. This conclusion is straightforward for the standard-finite instances, since the truth predicate agrees with actual truth on standard sentences, but when the model is $\omega$-nonstandard, it is a bit subtler to handle the nonstandard instances of the \ZFC\ axioms. For any model $V\models\ZFC$, let $\ZFC^V$ denote the collection of \Godel\ codes of the $\ZFC$ axioms from the perspective of $V$. If $V$ has a nonstandard $\omega$, then $\ZFC^V$ differs from $\ZFC$ by having additional nonstandard instances of replacement (we can disregard separation because it follows from replacement).

\begin{lemma}
If $\V=\<V,\in,\Cl>\models\KM$ and $\Tr$ is a first-order truth predicate in $\Cl$, then $\Tr$ will declare every sentence of $\ZFC^V$ to be true.
\end{lemma}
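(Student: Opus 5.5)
The plan is to verify the axioms of $\ZFC^V$ one at a time with respect to $\Tr$. Extensionality, foundation, emptyset, infinity, powerset, union and choice are finitely many standard sentences. Since $\Tr$ agrees with actual truth on standard formulas, and $V\models\ZFC$, these are declared true. So the whole issue is the replacement scheme, including its nonstandard instances when $\omega^V$ is nonstandard.

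Fix $\ulcorner\varphi(x,y,\bar p)\urcorner\in\ZFC^V$, possibly nonstandard, and consider its replacement instance. We argue inside $\V$. Suppose $\bar p$ and a set $a$ are given, and for each $x\in a$ there is a unique $y$ with $\Tr(\ulcorner\varphi(x,y,\bar p)\urcorner)$. By first-order comprehension with the class parameter $\Tr$, the relation
$$F=\set{(x,y)\mid x\in a\wedge \Tr(\ulcorner\varphi(x,y,\bar p)\urcorner)}$$
is a class, and it is a function on $a$. By the class replacement axiom of \KM\ (part of \GBC), $F\image a=b$ is a set. Hence for all $y$, $y\in b$ if and only if $\exists x\in a\,\Tr(\ulcorner\varphi(x,y,\bar p)\urcorner)$.

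Next we transfer this back through the Tarski clauses for $\Tr$. The quantifier clauses say that $\Tr(\ulcorner\exists x\,\psi\urcorner)$ holds iff some instance is in $\Tr$. The negation and conjunction clauses handle the Boolean connectives, and the atomic clause handles $y\in b$. Using these, $\Tr$ declares $\exists b\,\forall y\,(y\in b\leftrightarrow\exists x\in a\,\varphi(x,y,\bar p))$ true whenever it declares the hypothesis ``$\varphi$ defines a function on $a$'' true. Unwinding the universal quantifiers over $a$ and $\bar p$ the same way then gives that $\Tr$ declares the full replacement instance true.

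The step I expect to be the main obstacle is this unwinding. It must work for a single nonstandard formula of nonstandard length, so we cannot do a meta-theoretic induction on $\varphi$. But we never need one: we only peel off the finitely many outer quantifiers and connectives of the replacement template, which has fixed standard shape around the code $\ulcorner\varphi\urcorner$. The clauses of $\Tr$ are applied only at that outer level, and $\ulcorner\varphi\urcorner$ itself is treated as an opaque code. I would check carefully that the template's structure (the bounded quantifier $\exists x\in a$, the biconditional, and the uniqueness hypothesis) decomposes via the Tarski clauses into statements about $\Tr(\ulcorner\varphi(x,y,\bar p)\urcorner)$ for set parameters. If $\ZFC^V$ uses collection instead of replacement, the same argument applies using replacement in $V$ relativized to the class $\Tr$; this is available because $V$ satisfies \ZFC\ in the language expanded by class predicates.
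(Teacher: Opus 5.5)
Your proposal is correct and follows the paper's proof: from the Tarski clauses you get that $\Tr$ defines a function on $a$, you apply class replacement with the class parameter $\Tr$ to obtain the set $b$, and you use the Tarski clauses on the fixed outer template to show $\Tr$ contains the conclusion, treating the possibly nonstandard $\ulcorner\varphi\urcorner$ as an opaque code. The one thing you leave unmentioned is the separation scheme; the paper dismisses it as following from replacement, and it can also be handled directly by the same argument using class separation.
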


\begin{proof}
The main subtle issue here is that although $\Tr$ is correct about actual truth and this includes the actual \ZFC\ since $V\models\ZFC$, nevertheless $\ZFC^V$ might include further nonstandard instances of axioms when $V$ is $\omega$-nonstandard. It suffices to verify that $\Tr$ contains all instances of replacement, including any nonstandard instances. So suppose the hypothesis of a replacement assertion $\ulcorner \forall x\in a\,\exists !y\,\varphi(x,y,c)\urcorner$ is in $\Tr$. Using that $\Tr$ satisfies Tarskian truth conditions, it follows that
$$\V\models\forall x\in a\exists !y\,\ulcorner \varphi(x,y,c)\urcorner \in \Tr.$$
But this is an instance of actual replacement (one involving the class parameter $\Tr$), and therefore $\V$ satisfies that there exists a set $b$ collecting all the $y$. Thus, $\Tr$ must contain the assertion
$$\ulcorner \exists b\,\forall y\,(y\in b\leftrightarrow \exists x\in a\,\varphi(x,y,c))\urcorner$$
and so $\Tr$ validates replacement for the (possibly nonstandard) formula $\ulcorner \varphi(x,y,c)\urcorner$.
\end{proof}

We can now argue that every model of $\KM$ has what it thinks is a transitive model of $\ZFC$, and what is more, it realizes the universe $V$ as the union of a continuous elementary chain of rank-initial segments $V_\kappa\prec V$. 

\begin{lemma}\label{le:transitiveModelKM}
If $\V=\<V,\in,\Cl>\models\KM$, then $V$ is the union of a continuous elementary chain of a class club of rank-initial segments
$$V_{\delta_0}\prec V_{\delta_1}\prec\cdots\prec V_{\delta_\xi}\prec\cdots\prec V$$
In particular, $V$ thinks that each $V_{\delta_\xi}\models\ZFC$. Thus, there is a closed unbounded class of totally otherworldly cardinals.
\end{lemma}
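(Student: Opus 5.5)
We use the first-order truth predicate $\Tr\in\Cl$ provided by the lemma above, and then run the standard Löwenheim–Skolem/reflection argument, formalized inside $\V$ with $\Tr$ as a class parameter. By the previous lemma, $\Tr$ satisfies all of $\ZFC^V$, including nonstandard instances. The key point is that $V$ satisfies $\ZFC$ in the language expanded by the predicate $\Tr$, since replacement holds for formulas mentioning class parameters. So we may carry out first-order arguments in $\<V,\in,\Tr>$.

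First, we define the class $C$ of ordinals $\delta$ such that $V_\delta$ is closed under Skolem witnesses as judged by $\Tr$. Precisely, $\delta\in C$ iff for every (possibly nonstandard) formula code $\ulcorner\varphi(x,\bar y)\urcorner$ and every $\bar a\in V_\delta$, if $\ulcorner\exists x\,\varphi(x,\bar a)\urcorner\in\Tr$, then there is $b\in V_\delta$ with $\ulcorner\varphi(b,\bar a)\urcorner\in\Tr$. This class exists by first-order comprehension in the parameter $\Tr$.

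Next, we show that $C$ is closed and unbounded.
\begin{itemize}
\item Closure is immediate, since $V_\lambda=\bigcup_{\delta<\lambda}V_\delta$ at limits $\lambda$ and the condition only concerns finitely many parameters.
\item For unboundedness, fix $\alpha$. Using replacement and collection in $\<V,\in,\Tr>$, there is a function $f$ sending each $\beta$ to the least $\gamma>\beta$ such that every $\Tr$-true existential statement with parameters in $V_\beta$ has a witness in $V_\gamma$. This exists because $V_\beta\times\omega$ is a set, and for each pair we take the least rank of a witness.
\item Iterating $f$ $\omega$ times from $\alpha$ and taking the supremum gives a point of $C$ above $\alpha$.
\end{itemize}

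Third, we apply the Tarski–Vaught criterion relative to $\Tr$. By induction on formula complexity, carried out inside $\V$ as an induction over $\omega^V$ using comprehension, we show that for $\delta\in C$ and all (possibly nonstandard) $\varphi$ and $\bar a\in V_\delta$,
$$V_\delta\models\varphi(\bar a)\iff\ulcorner\varphi(\bar a)\urcorner\in\Tr,$$
where the left side uses the set-sized satisfaction relation for $V_\delta$. The induction works as follows: atomic cases and Boolean cases follow from the Tarskian clauses of $\Tr$, and the existential case uses membership in $C$. This induction is a second-order instance, which \KM\ allows. Hence $V_\delta\prec V$ in the sense of $\Tr$, and so for $\delta<\delta'$ in $C$ we get $V_\delta\prec V_{\delta'}$.

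Enumerating $C$ as $\<\delta_\xi\mid\xi\in\Ord>$ gives a continuous chain whose union is $V$, since $C$ is unbounded. By the previous lemma, $\Tr$ contains every axiom of $\ZFC^V$, so $V\models``V_{\delta_\xi}\models\ZFC"$. Total otherworldliness follows because for $\delta<\delta'$ in $C$, the model $V_{\delta'}$ sees $V_\delta\prec V_{\delta'}$.

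The main obstacle is the $\omega$-nonstandard case. There, elementarity and $\ZFC$ must hold for nonstandard formulas in $V$'s sense, which is exactly why everything is routed through $\Tr$ and the internal induction rather than through the meta-theory.
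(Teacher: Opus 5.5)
Your proposal is correct and is essentially the paper's argument: the paper gets the club from L\'evy--Montague reflection of $\Sigma_1$ facts about $\langle V,\in,\Tr\rangle$, which is exactly your Tarski--Vaught closure class $C$, and then observes that $\Tr\cap V_\delta$ obeys the Tarskian clauses over $V_\delta$, so truth in $V_\delta$ agrees with $\Tr$---the step you make explicit by internal induction (which is really a first-order induction in the class parameter $\Tr$, not a second-order one). One small slip: in the final step it is $V$, not $V_{\delta'}$, that sees $V_\delta\prec V_{\delta'}$ for the unboundedly many $\delta'\in C$ above $\delta$, and that is what yields total otherworldliness.
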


\begin{proof}
By the usual Levy-Montague reflection, we can find a class club of $\Sigma_1^0$-elementary substructures
$$\<V_\delta,\in,\Tr>\prec_{\Sigma_1} \<V,\in,\Tr>$$
for arbitrarily large $\delta$. Note that if $\<V_\delta,\in,\Tr>\prec_{\Sigma_1} \<V,\in,\Tr>$, then the structure $\<V_\delta,\in,\Tr>$ knows that $\Tr$ satisfies the Tarskian truth conditions.
So truth in $V_\delta$ agrees with truth in $V$, and consequently $V_\delta\prec V$, and also $V$ sees that $V_\delta\models\ZFC^V$, which means that $\delta$ is a worldly cardinal. Indeed, since $V_\delta\prec V_\lambda$ for all $\delta<\lambda$ in the class club, these cardinals are all totally otherworldly (and in fact much more).
\end{proof}

The argument adapts to any class parameters, so we actually get a club of totally $X$-otherworldly cardinals for any class $X$. 

Let us also record the upper bound. 

% But in particular, we see that \KM\ implies that there is a class club of totally otherworldly cardinals, which has consistency strength strictly far greater than $\Con(\ZFC)$ and $\Con(\ZFC+\Con(\ZFC))$ and so forth. Meanwhile, it is also known that \KM\ is weaker in consistency strength than an inaccessible cardinal, which we now show. Recall that $\KM^+$ is the theory $\KM$ together with the class choice scheme.

\begin{lemma}\label{le:inaccessibleKM}
{\rm(\ZFC)} If $\kappa$ is an inaccessible cardinal, then $\<V_\kappa,\in,V_{\kappa+1}>\models\KM^+$. 
\end{lemma}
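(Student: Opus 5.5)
We verify the axioms of $\KM^+$ one at a time in the structure $\V=\<V_\kappa,\in,V_{\kappa+1}>$. The guiding observation is that the classes are exactly the arbitrary subsets of $V_\kappa$. Second-order quantifiers over classes therefore range over the true power set $P(V_\kappa)=V_{\kappa+1}$, and every formula of second-order set theory is evaluated in $\V$ by an ordinary first-order formula over $V$, with the classes as parameters.

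The first step handles the axioms of \textbf{Set} together with \textbf{replacement}. Since $\kappa$ is inaccessible, $V_\kappa\models\ZFC$. For class replacement, let $F\subseteq V_\kappa$ be a function and $a\in V_\kappa$. Then $F\image a$ is a subset of $V_\kappa$ of size less than $\kappa$, because $|a|<\kappa$. By regularity of $\kappa$ its ranks are bounded below $\kappa$, so $F\image a\in V_\kappa$. Class extensionality is immediate, since the classes are genuine sets of elements of $V_\kappa$.

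The second step handles \textbf{global choice}. In $V$, by \AC, fix a well-order of $V_\kappa$, or simply a choice function $C$ on $V_\kappa\setminus\set{\emptyset}$. This $C$ is a subset of $V_\kappa$ (using that $V_\kappa$ is closed under pairing), so $C\in V_{\kappa+1}$.

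The third step is full \textbf{comprehension}. Given a second-order formula $\varphi(x,A)$ with class parameter $A\in V_{\kappa+1}$, the collection $\set{x\in V_\kappa\mid \V\models\varphi(x,A)}$ is a set in $V$ by first-order separation, since satisfaction in the set-sized structure $\V$ is definable in $V$. It is a subset of $V_\kappa$, hence it lies in $V_{\kappa+1}$.

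The last step, which carries the main content, is the \textbf{class choice} scheme. Suppose $\V\models\forall x\,\exists X\,\varphi(x,X,A)$. Working in $V$, for each $x\in V_\kappa$ the set $W_x=\set{X\in V_{\kappa+1}\mid \V\models\varphi(x,X,A)}$ is nonempty. By \AC\ in $V$, applied to the set-sized family $\<W_x\mid x\in V_\kappa>$, choose $X_x\in W_x$. Let $Z=\set{(x,y)\mid x\in V_\kappa,\ y\in X_x}$. Then $Z\subseteq V_\kappa$, because $V_\kappa$ is closed under ordered pairs, so $Z\in V_{\kappa+1}$. Moreover $Z_x=X_x$ for every $x$, and hence $\V\models\forall x\,\varphi(x,Z_x,A)$.

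I do not expect a genuine obstacle here. The only point needing care is the observation that the hyperclass choice in $\V$ is just ordinary \AC\ in the ambient universe: the index set $V_\kappa$ and the family of classes are both sets there. One also uses that section coding stays inside $V_\kappa$, which follows from closure of $V_\kappa$ under pairing.
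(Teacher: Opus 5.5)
Your proof is correct and follows essentially the same axiom-by-axiom verification as the paper: $V_\kappa\models\ZFC$ for the set axioms, regularity of $\kappa$ for class replacement, separation in $V$ for comprehension, and choice in $V$ for global choice and the class choice scheme. The only difference is cosmetic: the paper cites collection together with choice for the last step, whereas you correctly observe that, since all the witnesses already lie in the set $V_{\kappa+1}$, separation and \AC\ in $V$ suffice.
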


\begin{proof}
Since $\kappa$ is inaccessible, $V_\kappa\models\ZFC$. The \textbf{global choice} axiom holds in $\<V_\kappa,\in,V_{\kappa+1}>$ because $V$ satisfies choice, and therefore $V_{\kappa+1}$ has a choice function for $V_\kappa$, and the \textbf{replacement} axiom holds because $V_\kappa$ contains all its small subsets. \textbf{Comprehension} holds because separation holds for $V_\kappa$ in $V$ and $V_{\kappa+1}$ has all subsets of $V_\kappa$. The class choice scheme holds using collection together with choice in $V$. \end{proof}

It follows by the \Lowenheim-Skolem theorem that $V_\kappa$ will have countable models of $\KM^+$, and so the existence of an inaccessible cardinal implies $\Con(\KM^++\Con(\KM^+))$ and much more.

If we seek merely a model of $\KM$ but not necessarily $\KM^+$, that is, without the class choice scheme, then we do not need \ZFC\ in the background, but only \ZF, as follows.

\begin{lemma}\label{le:ZFmodelOfKM}
Assume \ZF. If $\kappa$ is an inaccessible cardinal and $V_\kappa$ is well-orderable, then $\<V_\kappa,\in,V_{\kappa+1}>\models\KM$.
\end{lemma}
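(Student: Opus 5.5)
The plan is to verify the axioms of \KM\ in $\<V_\kappa,\in,V_{\kappa+1}>$ one at a time, following the proof of Lemma~\ref{le:inaccessibleKM}. The difference is that the background choice used there has to be replaced by the assumed well-ordering of $V_\kappa$. We do not need the class choice scheme, and that is the one place where Lemma~\ref{le:inaccessibleKM} used genuine choice in $V$, namely collection together with choice over possibly non-well-orderable families of subsets of $V_\kappa$. So we do not claim it here.

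First we handle the axioms of \textbf{Set}. Extensionality, foundation, emptyset, infinity and union are immediate for a rank-initial segment $V_\kappa$ with $\kappa$ a limit ordinal above $\omega$. Extensionality for classes holds because classes are just subsets of $V_\kappa$. Powerset holds because $\kappa$ is a limit: if $x\in V_\alpha$ with $\alpha<\kappa$, then $P(x)\in V_{\alpha+1}\subseteq V_\kappa$.

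Next we treat \textbf{global choice}. Fix a well-ordering $W$ of $V_\kappa$. Define $C(a)$ to be the $W$-least element of $a$ for nonempty $a\in V_\kappa$. This $C$ is a subset of $V_\kappa\times V_\kappa$, and since $\kappa$ is a limit, ordered pairs of elements of $V_\kappa$ lie in $V_\kappa$. Hence $C$ is, in effect, a subset of $V_\kappa$ and so belongs to $V_{\kappa+1}$. Note that $W$ itself is likewise a class of the model.

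The main step is \textbf{replacement}: if $F\subseteq V_\kappa$ is a function and $a\in V_\kappa$, we must show that $F\image a\in V_\kappa$. This is where the \ZF\ reading of inaccessibility matters. I take ``inaccessible'' to mean that $\kappa$ is regular and that $V_\alpha$ does not surject onto $\kappa$ for $\alpha<\kappa$, which is the strong-limit condition in choiceless form. Let $g(x)=\operatorname{rank}(F(x))$ for $x\in a$. Then the ordertype of $g\image a$ is an image of $a\in V_\alpha$, so it is less than $\kappa$ by the strong limit condition. By regularity of $\kappa$, the set $g\image a$ is bounded below $\kappa$. Therefore $F\image a\subseteq V_\beta$ for some $\beta<\kappa$, and so $F\image a\in V_{\beta+1}\subseteq V_\kappa$. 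In fact, because $V_\kappa$ is well-orderable by $W$, we could alternatively transfer everything to cardinals of a well-ordered set and use ordinary regularity. This step needs the most care, depending on which \ZF\ definition of inaccessible is adopted. I expect it to be the main obstacle, but only a mild one.

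Finally, \textbf{comprehension} for arbitrary second-order formulas $\varphi(x,\vec A)$ with class parameters is handled as follows. The quantifiers over classes range over $V_{\kappa+1}$, which is a set in $V$, and the quantifiers over sets range over $V_\kappa$. So satisfaction in $\<V_\kappa,\in,V_{\kappa+1}>$ is definable in $V$, and \ZF\ separation applied inside $V_\kappa$ yields $\set{x\in V_\kappa\mid \<V_\kappa,\in,V_{\kappa+1}>\models\varphi(x,\vec A)}$. This is a subset of $V_\kappa$, hence an element of $V_{\kappa+1}$. This also confirms that $V_\kappa\models\ZF$ in the class-expanded language, and with global choice it gives \ZFC, which completes the verification of \KM.
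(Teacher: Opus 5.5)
Your proposal is correct and is essentially the argument the paper intends. The paper gives no separate proof; it simply reruns the proof of Lemma~\ref{le:inaccessibleKM}, with the well-ordering of $V_\kappa$ supplying global choice, replacement following because $V_\kappa$ contains all its small subsets, and comprehension following from separation in $V$ applied to the set structure $\langle V_\kappa,\in,V_{\kappa+1}\rangle$; your choiceless rank-bounding argument for replacement is a faithful filling-in of that outline.
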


\noindent Since we may not have choice functions for all families of subsets of $V_{\kappa+1}$, the class choice scheme can fail in $\<V_\kappa,\in,V_{\kappa+1}>$. So if we are hoping to obtain a model of $\KM$ in which the class choice scheme fails, this suggests a natural strategy, which we will implement in section~\ref{sec:separatingChoiceSchemes}, namely, to look for such $\ZF$ models, which we will find as symmetric inner models of forcing extensions, obtained by forcing at or above the inaccessible cardinal $\kappa$.

We should also note that the consistency of $\KM$ follows from the consistency of $\ZFC^-$ together with the assertion that there is an inaccessible cardinal. If $N\models\ZFC^-$ and $\kappa$ is inaccessible in $N$, then $\<V_\kappa^N,\in,P^N(V_\kappa)>\models\KM^+$, where $P^N(V_\kappa)$ is the collection of all subsets of $V_\kappa^N$ that are elements of $N$. This is the easy direction of the result mentioned in the introduction, $\KM^+$ is bi-interpretable with $\ZFC^-$ together with the assertion that there is a largest cardinal, which is inaccessible.

%It will be part of our later strategy when aiming to produce various countermodels of \KM to apply this lemma inside the various \ZF\ models that arise as symmetric extensions. We will start with a model of \ZFC\ having an inaccessible cardinal, force at or above this cardinal to $V[G]$, and then form the symmetric extension $N\of V[G]$, in which $\kappa$ will remain inaccessible, and then apply lemma \ref{le:ZFmodelOfKM}.

\section{A detour into second-order arithmetic}\label{sec:soArithmetic}

Many of the theorems of second-order set theory are direct analogues of their counterparts in the more extensively studied field of second-order arithmetic. Second-order arithmetic is a formal framework for studying numbers together with sets of numbers, which we can think of as the reals. Because of this its main role has been providing the weakest possible foundations for formalizing analysis. It is interpretable in a two-sorted logic with the language that consists of the first-order language of arithmetic $\mathcal L=\set{+,\cdot,<,0,1}$ together with a membership relation $\in$ for numbers in sets. Relevant models of second-order arithmetic have the form $\mathcal M=\<M,+,\cdot,<,0,1,\Cl>$, where $\<M,+,\cdot,<,0,1>$ is a model of first-order arithmetic and $\Cl$ is a collection of subsets of $M$. Axiomatic foundations for second-order arithmetic consist of the first-order axioms $\PA^-$ (Peano Axioms without induction), the induction scheme for first-order formulas with set parameters, set existence principles, and possibly strengthening of the induction scheme to second-order assertions. Primarily, the axiomatizations differ in their set existence principles. For instance, the second-order theory ${\rm WKL_0}$ asserts that any collection of numbers recursive in a set is itself a set and if a set of numbers codes an infinite binary tree, then there is another set of numbers coding some infinite path through this tree. The analogue of $\GBC$ in second-order arithmetic is the theory ${\rm ACA_0}$ which asserts that any first-order definable collection of numbers (with set parameters) is a set and the analogue of $\KM$ is one of the strongest second-order arithmetic theories $\fa$, which has full second-order comprehension. If $V\models\ZF$, then its model of first-order arithmetic $\<\omega,+,\cdot,<,0,1>$ together with all its definable subsets is a model of ${\rm ACA_0}$ and $\<\omega,,+,\cdot,<,0,1,P(\omega)>\models\fa$.

The natural analogue of the class choice scheme for second-order arithmetic asserts for every formula in the language of second-order arithmetic and set parameter $A$ that:
$$\forall n\,\exists X\,\varphi(n,X,A)\rightarrow \exists Z\,\forall n\,\varphi(n,Z_n,A).$$ We can also analogously define the $\Sigma^1_n$ (or $\Pi^1_n$)-choice scheme fragments as well as the parameter-free choice scheme. If $V\models\ZF+\AC_\omega$, then the class choice scheme holds in its model of second-order arithmetic $\<\omega,,+,\cdot,<,0,1,P(\omega)>$. If $N\models \ZFC^-$ + every set is countable, then $\<\omega,+,\cdot,<,0,1,P^N(\omega)>\models\fa$ + choice scheme.

Although technically sophisticated set theoretic techniques are used to determine how much of the class choice scheme follows from $\fa$, it is easy to see that ${\rm ACA}_0$ doesn't prove even first-order instances of the class choice scheme.
\begin{observation}\label{obs:ACA0ChoiceScheme}
There is a model of ${\rm ACA}_0$ in which the class choice scheme fails for a first-order formula.
\end{observation}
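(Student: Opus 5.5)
The plan is to imitate the truth-predicate counterexample from the introduction, now in the arithmetic setting. Take the standard model $\<\omega,+,\cdot,<,0,1>$ and let $\Cl$ be the collection of arithmetically definable subsets of $\omega$, that is, the sets definable by first-order formulas in $\<\omega,+,\cdot,<,0,1>$. More economically but equivalently, $\Cl$ consists of the sets recursive in $0^{(n)}$ for some $n\in\omega$.

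\emph{Step 1: this is a model of ${\rm ACA}_0$.} The structure is an $\omega$-model, so first-order induction with set parameters holds automatically. For arithmetical comprehension, note that a set first-order definable from parameters $0^{(n_1)},\dots,0^{(n_k)}$ (or from sets recursive in them) is arithmetical in $0^{(n)}$ for $n=\max n_i$. It is therefore recursive in some $0^{(m)}$ and so lies in $\Cl$.

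\emph{Step 2: an instance of the hypothesis holds.} Let $\varphi(n,X)$ be a first-order formula saying that $X$ is the $n$-th Turing jump of $\emptyset$. Concretely, $\varphi(n,X)$ asserts that $X$ codes a sequence $\<X_0,\dots,X_n>$ on sections $X_{(i)}$ with $X_{(0)}=\emptyset$, each $X_{(i+1)}$ equal to the jump of $X_{(i)}$, and nothing on sections beyond $n$. The jump condition ``$e\in X_{(i+1)}\leftrightarrow \exists s\,\{e\}^{X_{(i)}\restrict s}_s(e)\downarrow$'' is arithmetic in $X$, so $\varphi$ is a first-order formula with set variable $X$ and no parameters. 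For each $n$, the unique witness $\bigoplus_{i\le n}0^{(i)}$ is recursive in $0^{(n)}$, so it lies in $\Cl$. Hence $\forall n\,\exists X\,\varphi(n,X)$ holds in the model.

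\emph{Step 3: the conclusion fails.} Suppose $Z\in\Cl$ with $\varphi(n,Z_n)$ for all $n$. Because the model is an $\omega$-model and $\varphi$ is absolute, each $Z_n$ is truly the code of $0^{(0)},\dots,0^{(n)}$. Then $0^{(\omega)}=\set{(n,e)\mid e\in 0^{(n)}}$ is recursive in $Z$. But $Z$ is recursive in some $0^{(m)}$, which would give $0^{(m+1)}\le_T 0^{(\omega)}\le_T 0^{(m)}$, a contradiction.

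The main point needing care is Step 2: the formula must pin down the witness uniquely and be genuinely first-order in the language of second-order arithmetic. Uniqueness is what blocks any choice of some other witness, and it follows by an induction on $i\le n$ in the $\omega$-model. The remaining verifications are routine recursion theory. Equivalently, one could phrase Step 3 via Tarski's theorem on truth predicates, exactly paralleling the \GBC\ example in the introduction.
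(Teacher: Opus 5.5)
Your proof is correct and is essentially the paper's argument: the paper also takes $\Cl$ to be the arithmetically definable sets, and uses the uniquely determined $\Sigma_n$-truth predicates as witnesses, noting by Tarski that the full truth predicate cannot be collected. Your iterated jumps $0^{(n)}$ are Turing-equivalent to those truth predicates by Post's theorem, and the fact that $0^{(\omega)}$ is not arithmetical is just the recursion-theoretic form of that Tarski argument, made more explicit in your version.
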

\begin{proof}
Let $\Cl$ be the collection of all definable subsets of $\<\omega,+,\cdot,<,0,1>$, which in particular includes its $\Sigma_n$-truth predicates for every $n\in\omega$, but not the full truth predicate. Thus, the model $\<\omega,+,\cdot,<,0,1,\Cl>\models{\rm ACA}_0$, but it cannot collect the $\Sigma_n$-truth predicates.
\end{proof}
A model $\mathcal M$ of $\fa$ (or even a much weaker second-order theory) can code in a surprising amount of set theory. Every well-founded tree coded in $\Cl$ itself codes some hereditarily countable set and therefore we can consider the model of first-order set theory consisting of these sets (see \cite{simpson:second-orderArithmetic}, chapter 7 for details of the coding). Indeed, if $\mathcal M\models \fa$ + choice scheme, then it is not difficult to verify that its associated model of set theory satisfies $\ZFC^-$ together with the assertion that every set is countable, making it that the two theories are bi-interpretable. In particular, the model of set theory associated to $\mathcal M$ has its own constructible hierarchy $L^{\mathcal M}$ and the assertion that some tree codes a set of the form $L_\alpha$ has complexity $\Pi_1^1$ in $\mathcal M$ (the second-order quantifier is necessary to verify well-foundedness). More generally, the model of set theory has submodels of the form $L^{\mathcal M}[A]$ for every set $A$ in $\Cl$. These constructible and relatively constructible sets of $\mathcal M$ have all their usual properties such as a definable well-ordering, and they even satisfy Shoenfield's absoluteness theorem: $\Sigma^1_2$ assertions $\varphi(A)$ are absolute to $L[A]$. Details of these arguments can be found in \cite{simpson:second-orderArithmetic} (chapter 7). Using Shoenfield's absoluteness it immediately follows that $\fa$ proves the $\Sigma^1_2$-choice scheme.
\begin{theorem}\label{th:choiceSchemeHoldsArithmetic}
The theory $\fa$ proves all instances of the $\Sigma^1_2$-choice scheme.
\end{theorem}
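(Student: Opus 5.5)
The plan is to use the coding of set theory inside $\fa$ described just above, together with Shoenfield absoluteness relativized to $L[A]$. Fix a $\Sigma^1_2$ formula $\varphi(n,X,A)$ and suppose $\mathcal M\models\fa$ satisfies $\forall n\,\exists X\,\varphi(n,X,A)$. We work in the model of hereditarily countable sets coded by well-founded trees in $\mathcal M$, and in particular inside its relatively constructible universe $L^{\mathcal M}[A]$. The idea is to choose, for each $n$, the $<_{L[A]}$-least witness $X$ with $\varphi(n,X,A)$, and then to define $Z$ by comprehension as the set of pairs $(n,k)$ with $k$ in that least witness.

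The key steps are as follows. First, for each $n$ there is some $X\in\Cl$ with $\varphi(n,X,A)$. Hence the $\Sigma^1_2$ assertion $\exists X\,\varphi(n,X,A)$, which is itself $\Sigma^1_2$ in the parameter $A$, holds in $\mathcal M$. By Shoenfield absoluteness in $\mathcal M$ it holds in $L[A]$, so there is a witness $X\in L[A]$ with $L[A]\models\varphi(n,X,A)$. Applying Shoenfield again, now downward-upward for the $\Sigma^1_2$ formula $\varphi$ with parameters $X,A$, we get $\mathcal M\models\varphi(n,X,A)$. Second, $L[A]$ has a definable well-ordering $<_{L[A]}$ of its reals. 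Reals of $L[A]$ are those coded in some $L_\alpha[A]$, and this is expressible in $\mathcal M$ with complexity $\Sigma^1_2$ (``there is a countable well-founded model of the form $L_\alpha[A]$ containing $X$ in which $X$ is the least real satisfying $\varphi(n,\cdot,A)$ relativized appropriately''). So we let $X_n$ be the $<_{L[A]}$-least real in $L[A]$ such that $L[A]\models\varphi(n,X_n,A)$. Third, we define $Z=\set{(n,k)\mid k\in X_n}$. This is a second-order definition over $\mathcal M$, so $Z\in\Cl$ by full comprehension. By the first step, $\varphi(n,Z_n,A)$ holds in $\mathcal M$ for every $n$.

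The main obstacle is making the second and third steps rigorous inside $\mathcal M$ without assuming the choice scheme itself. We need to verify that ``$X$ is the $<_{L[A]}$-least real with $L[A]\models\varphi(n,X,A)$'' is a genuine formula of second-order arithmetic. This may be done by relativizing $\varphi$ to $L[A]$ via the codes for initial segments $L_\alpha[A]$, whose well-foundedness is $\Pi^1_1$. We also need uniqueness of the witness, so that the definition of $Z$ does not require choosing classes. Uniqueness is what makes $Z$ obtainable by mere comprehension rather than choice. The remaining facts, namely that $L[A]$ satisfies its usual properties in $\mathcal M$ and that Shoenfield absoluteness holds there, are taken from the cited development in \cite{simpson:second-orderArithmetic}, as the paper indicates.
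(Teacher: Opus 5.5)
Your proposal is correct and follows essentially the same route as the paper: Shoenfield absoluteness puts a witness for each $n$ into $L^{\mathcal M}[A]$, the definable well-order $<_{L[A]}$ makes the chosen witness unique, and full comprehension then collects these witnesses into $Z$. The differences are cosmetic. You take the least $X$ with $L[A]\models\varphi(n,X,A)$ and then push back up to $\mathcal M$ by Shoenfield, whereas the paper takes the least $X\in L^{\mathcal M}[A]$ with $\varphi(n,X,A)$ holding in $\mathcal M$. Also, your $\Sigma^1_2$ complexity estimate in the second step, whose parenthetical gloss is imprecise, is unnecessary, since full comprehension in $\fa$ makes the complexity of the defining formula irrelevant.
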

\begin{proof}
Suppose that $\mathcal M=\<M,+,\cdot,<,0,1,\Cl>\models\fa$ and $\forall n\,\exists X\,\varphi(n,X,A)$ holds in $\mathcal M$ for some $A\in \Cl$ and $\varphi$ of complexity $\Sigma^1_2$. By Shoenfield's absoluteness, since $\exists X\,\varphi(n,X,A)$ is $\Sigma^1_2$, for every $n$, there is $X\in L^{\mathcal M}[A]$ such that $\varphi(n,X,A)$ holds. But since $\mathcal M$ has a definable well-ordering $<^{L[A]}$ of $L^{\mathcal M}[A]$ , for every $n$, there is a $<^{L[A]}$-least such $X$. So we can assume that the witnessing classes $X$ are unique and therefore we can collect them using comprehension.
\end{proof}
It turns out that theorem~\ref{th:choiceSchemeHoldsArithmetic} is optimal because it is possible for a $\Pi^1_2$-instance of the class choice scheme to fail in a model of $\fa$. The strategy for producing such a model is the same as suggested earlier for the case of $\KM$: we look for a model of $\ZF$ in which countable choice fails for some family that also happens to be definable in its model of second-order arithmetic. The result is due to Feferman and \Levy\ who used their classic Feferman-\Levy\ model of $\ZF$ which demonstrated that in the absence of countable choice $\omega_1$ can be a countable union of countably many sets (see \cite{levy:choicescheme}, Theorem 8). Let's recall their construction. Working in $L$, consider the finite-support product $\P=\Pi_{n\in\omega}\Coll(\omega,\omega_n)$. Let $G\subseteq \P$ be $L$-generic and $G_n$ be the restrictions of $G$ to $\overline{\P}_n=\Pi_{m<n}\Coll(\omega,\omega_n)$. Since each $\Coll(\omega,\omega_n)$ is weakly homogeneous, we can use the construction outlined in section~\ref{subsec:symmetricModels} of the appendix to construct a symmetric submodel $N$ of $L[G]$ with the property that its sets of ordinals are precisely those added by proper initial segments of the collapsing product. This is the Feferman-\Levy\ model. Thus, all $\omega_n$ are countable in $N$, but $\omega_\omega$ is uncountable because no initial segment $\overline{\P}_n$ can collapse it, making it the $\omega_1$ of $N$. Note also $P^N(\omega)=\Union_{n\in\omega}P^{L[G_n]}(\omega)$.
\begin{theorem}[Feferman-\Levy]
There is a model of $\fa$ in which the class choice scheme fails for a $\Pi^1_2$-formula.
\end{theorem}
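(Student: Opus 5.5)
We take the model to be $\mathcal M=\<\omega,+,\cdot,<,0,1,P^N(\omega)>$, where $N$ is the Feferman-\Levy\ model just described. Since $N\models\ZF$, this structure satisfies $\fa$: every subset of $\omega$ definable over $\mathcal M$, even with second-order quantifiers, is definable in $N$ by separation and therefore lies in $P^N(\omega)$. We will also use the recalled facts that $P^N(\omega)=\Union_{n}P^{L[G_n]}(\omega)$ and that every $\omega_n^L$ is countable in $N$.

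The failing instance is
$$\forall n\,\exists X\,\varphi(n,X),$$
where $\varphi(n,X)$ says that $X$ codes a well-ordering of $\omega$ of order type $\omega_n^L$. This condition has to be expressed in second-order arithmetic, so we say: $X$ codes a well-order relation $W$ on $\omega$, and there is a real $Y$ coding an $L_\beta$ (through a well-founded tree, in the Simpson-style coding from chapter 7) such that $L_\beta$ correctly computes $\omega_n$ and $W$ is isomorphic to it. We need to check the complexity carefully, because that is where the difficulty lies. Well-foundedness of $W$ is $\Pi^1_1$. Saying that $L_\beta\models$ ``$\gamma$ is the $n$-th infinite cardinal'' is arithmetic in the code $Y$. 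Saying that $\gamma$ really is $\omega_n^L$ requires that no $L_{\beta'}$ with $\beta'>\beta$ collapses $\gamma$. That is the statement ``for all $Y'$, if $Y'$ codes a well-founded $L_{\beta'}$ then \ldots'', which is $\Pi^1_2$. To stay within $\Pi^1_2$ overall, we will instead phrase $\varphi$ universally: $X$ is a well-ordering ($\Pi^1_1$), and for every real $Y$ coding a well-founded $L_\beta$ (an antecedent that is $\Pi^1_1$, occurring negatively), the relation between $\operatorname{otp}(X)$ and $L_\beta$ is as required, namely that $L_\beta$ contains no collapse of $\operatorname{otp}(X)$ to a smaller ordinal and the cardinals of $L_\beta$ below $\operatorname{otp}(X)$ number exactly $n$. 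We must also ensure that an $L_\beta$ containing $\operatorname{otp}(X)$ exists. Such an $L_\beta$ exists in $N$ because $\operatorname{otp}(X)$ is countable there, so this clause can be attached without raising the complexity beyond $\Pi^1_2$. We accept some looseness here, since the essential point is only that $\varphi$ is $\Pi^1_2$ and defines ``order type $\omega_n^L$'' correctly in $\mathcal M$. The correctness uses the fact that $\mathcal M$ sees every countable $L_\beta$ of $N$ through its codes.

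The hypothesis holds: each $\omega_n^L$ is collapsed by $G_{n+1}$, so a code for it exists in $P^{L[G_{n+1}]}(\omega)\subseteq P^N(\omega)$. The conclusion fails. Suppose some $Z\in P^N(\omega)$ had $\varphi(n,Z_n)$ for every $n$. Then $Z$ lies in $L[G_m]$ for some $m$. In $L[G_m]$, the section $Z_{m+1}$ would collapse $\omega_{m+1}^L$. But $\overline{\P}_m$ has size $\omega_m$ in $L$, so it is $\omega_{m+1}$-c.c. and preserves $\omega_{m+1}^L$, a contradiction. Equivalently, such a $Z$ would yield that $\omega_\omega^L=\omega_1^N$ is countable in $N$. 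Working this way, the main obstacle is verifying in $N$ that $\varphi$ is $\Pi^1_2$ and is absolute for $\mathcal M$. Failing that, we fall back on the standard \Levy\ analysis.
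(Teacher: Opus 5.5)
Your proposal is correct and is essentially the paper's argument. You use the same model $\mathcal M$ built from $P^N(\omega)$, the same $\Pi^1_2$ analysis by universally quantifying over reals coding well-founded $L_\beta$ above the relevant ordinal (you code well-orders of type $\omega_n^L$ where the paper codes $L_{\omega_n}$ itself), and the same contradiction: a uniformizing $Z$ would lie in some $L[G_m]$ and collapse $\omega_{m+1}^L$, equivalently make $\omega_\omega^L=\omega_1^N$ countable in $N$.

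Two small tidyings are needed. The clause asserting that some $L_\beta$ above $\operatorname{otp}(X)$ exists should be dropped, not attached: it is $\Sigma^1_2$, but it is automatically true in $\mathcal M$, which is what keeps the universal clause non-vacuous. And the universal clause should be restricted to $\beta>\operatorname{otp}(X)$, exactly as the paper does.
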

\begin{proof}
Let $\mathcal M=\<\omega,+,\cdot,<,0,1,P^N(\omega)>$, where $N$ is a Feferman-\Levy\ model. Each $L_{\omega_n}$ is coded in $P^N(\omega)$, but $L_{\omega_\omega}$ is not. The assertion that a set $X$ codes $L_{\omega_n}$ (for some fixed $n$) has complexity $\Pi^1_2$ because it says that $X$ codes an $L_\alpha$ and whenever $Y$ codes an $L_\beta$ with $\beta>\alpha$, then $L_\beta$ agrees that $\alpha=\omega_n$, where as we observed previously the assertion that a set codes an $L_\alpha$ has complexity $\Pi_1^1$. Thus, the $\Pi^1_2$ instance of the class choice scheme
$$\forall n\,\exists X\, X\text{ codes }L_{\omega_n}\rightarrow\exists Z\,\forall n\,Z_n\text{ codes }L_{\omega_n},$$
fails in $\mathcal M$ because if such $Z$ existed, $\mathcal M$ would be able to construct $L_{\omega_\omega}$.
\end{proof}

\section{\KM\ does not prove the class choice scheme}\label{sec:independence}

% we've mentioned this several times already, so I have omitted this here.
% Before we get to the technical arguments showing that $\KM$ does not imply the class choice scheme, let's observe that it is easy to make the class choice scheme fail in a $\GBC$ model, using an analog of the argument used for ${\rm ACA}_0$ in observation~\ref{obs:ACA0ChoiceScheme}.

% \begin{observation}
% If there is an $\omega$-model of $\ZFC$, then there is a model of $\GBC$ in which the class choice scheme fails for a first-order formula.
% \end{observation}

% \begin{proof}
% Let $M$ be an $\omega$-model of $\ZFC$ and consider the $\GBC$ model consisting of $L^M$ together with its definable subsets. Then, because $L^M$ has a standard $\omega$, $\Sigma_n$-truth is definable in $L^M$ for all its natural numbers $n$, but truth is not definable.
% \end{proof}

Let us now prove that \KM\ does not prove the class choice scheme. For a first proof, we implement a straightforward generalization of the Feferman-\Levy\ construction with forcing to collapse the first $\omega$ many successor cardinals of an inaccessible $\kappa$ to $\kappa$. This will show that a $\Pi_1^1$-instance of the class choice scheme can fail in a model of $\KM$, in contrast to the situation with second-order arithmetic, where $\fa$ proves the $\Sigma^1_2$-choice scheme.

To explain, suppose that $L$ has an inaccessible cardinal $\kappa$. Working in $L$, consider the finite-support product $\P=\Pi_{n\in\omega}\Coll(\kappa,\kappa^{(+n)})$ to collapse the first $\omega$ many successor cardinals of $\kappa$ to $\kappa$. Let $G\subseteq\P$ be $L$-generic and let $G_n$ be the restriction of $G$ to the first $n$ factors $\overline{\P}_n=\Pi_{m<n}\Coll(\kappa,\kappa^{(+m)})$.  We now construct a symmetric submodel $N$ of $L[G]$ with the property that its sets of ordinals are precisely those added by proper initial segments of the collapsing product (see further details in the construction outlined in section~\ref{subsec:symmetricModels} of the appendix). Thus, all $\kappa^{(+n)}$ have size $\kappa$ in $N$, but $\kappa^{(+\omega)}$ is the $\kappa^+$ of $N$ because it is not collapsed by any proper initial segment $\overline{\P}_n$. Although $V_\kappa^{L[G]}$ has new subsets of $\omega$ and indeed all $(\kappa^{+n})$ are collapsed to $\omega$, nevertheless these are not added by the initial segments of the forcing, since the finite products $\bar\P_n$ are ${<}\kappa$-closed. In particular, $V_\kappa^N=V_\kappa=L_\kappa$.
% this seems all beside the point:
% To see this, we argue by induction on $\alpha\leq\kappa$ that $V_\alpha^N=V_\alpha$. Since $V_\omega^N=V_\omega$, we can suppose inductively that $V_\alpha^N=V_\alpha$ for some $\alpha$. Fix $A\in V_{\alpha+1}^N$. Since $L$ has a bijection between $V_\alpha$ and some ordinal, then so does $N$, meaning that we can suppose without loss that $A$ is a subset of ordinals. But then, for some $n$, we have $A\in V_{\alpha+1}^{L[G_n]}=V_{\alpha+1}$. 
Indeed, what we have is   $V_{\kappa+1}^N=\Union_{n\in\omega}V_{\kappa+1}^{L[G_n]}$. Clearly $\kappa$ is still inaccessible in $N$ and $N$ has a well-ordering of $V_\kappa$ because $L$ did. Thus, we have satisfied the hypothesis of lemma~\ref{le:ZFmodelOfKM}, and so $\<V_\kappa,\in,V_{\kappa+1}^N>\models\KM$.

Using the same coding as described in the previous section for models of $\fa$, the classes of a model $\mathcal M\models \KM$ can encode a structure of sets, including new ordinals, that sits above the sets of $\mathcal M$.\footnote{For more details on the coding, see \cite{antos:thesis}.} For models of the form $\<V_\kappa,\in, V_{\kappa+1}>$, these ordinals are precisely $\kappa^+$ and the extra sets are those of $H_{\kappa^+}$.
\begin{theorem}\label{th:FailureChoiceSchemePi11}
\emph{($\ZFC\,+\,$there is an inaccessible cardinal)} There is a model of $\KM$ in which the class choice scheme fails for a $\Pi_1^1$-formula.
\end{theorem}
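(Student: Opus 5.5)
We use the model $\mathcal M=\<V_\kappa,\in,V_{\kappa+1}^N>$ just built, with $N\subseteq L[G]$ the symmetric Feferman-\Levy-style model for $\P=\Pi_{n\in\omega}\Coll(\kappa,\kappa^{(+n)})$ over $L$. We already know $\mathcal M\models\KM$ by lemma~\ref{le:ZFmodelOfKM}, since $\kappa$ remains inaccessible in $N$ and $V_\kappa=L_\kappa$ is well-orderable there. It remains to exhibit a $\Pi^1_1$ instance of the class choice scheme that fails. The instance mirrors the Feferman-\Levy\ argument: for each $n\in\omega$ there should be a class coding $L_{\kappa^{(+n)}}$, but no single class should code all of them, since such a class would code $L_{\kappa^{(+\omega)}}$ and collapse $\kappa^{(+\omega)}=(\kappa^+)^N$ to $\kappa$.

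First, define the coding. A class $X\subseteq V_\kappa$ codes a binary relation $E_X$ on $\kappa$ (via a pairing function on $\kappa$). Consider the property ``$E_X$ is well-founded and extensional, and its transitive collapse is $L_\alpha$ for some $\alpha$ with $|\alpha|=\kappa$.'' Well-foundedness is $\Pi^1_1$: every nonempty class $Y\subseteq\kappa$ has an $E_X$-minimal element. Given well-foundedness, ``the collapse satisfies $V=L$ and is an $L_\alpha$'' is first-order in $X$, using a truth predicate for the set-sized structure $(\kappa,E_X)$, which is itself a set in $V_{\kappa+1}$ definable in a first-order way. Hence ``$X$ codes some $L_\alpha$'' is $\Pi^1_1$. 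Next, we must pin down $\alpha=\kappa^{(+n)}$ at $\Pi^1_1$ complexity, rather than at the $\Pi^1_2$ complexity used for $\fa$. Here we exploit the fact that $\kappa^{(+n)}$ computed in $L$ is simply the $n$-th cardinal of $L_\alpha$ above $\kappa$. So we take $\varphi(n,X)$ to say that $X$ codes an $L_\alpha$ in which there are exactly $n$ cardinals above $\kappa$, with $\alpha$ a limit of such. This is not quite enough: we need $L_\alpha$ to compute cardinals correctly. The cleanest fix is to let $\varphi(n,X)$ assert that $X$ codes $L_\beta$ for some $\beta$, together with a distinguished ordinal $\gamma$ (coded as a point of $E_X$) such that $L_\beta\models$ ``$\gamma=\kappa^{(+n)}$'' and $\beta=\gamma^{+L}$ in the sense that $L_\beta\models$ ``every ordinal has size $\le\gamma$''. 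Then $L_\beta$ is correct about cardinals $\le\gamma$, because any collapse in $L$ of $\gamma$ would appear before $\gamma^{+L}$. This keeps $\varphi$ at $\Pi^1_1$: a single universal class quantifier for well-foundedness, with the rest first-order.

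Second, verify the hypothesis $\forall n\,\exists X\,\varphi(n,X)$ in $\mathcal M$. In $L[G_{n+2}]$, both $\kappa^{(+n)}$ and $\kappa^{(+n+1)}$ have size $\kappa$, so $L_{\kappa^{(+n+1)}}$ is coded by a subset of $\kappa$ lying in $V_{\kappa+1}^{L[G_{n+2}]}\subseteq V_{\kappa+1}^N$. Third, refute the conclusion. If $Z\in V_{\kappa+1}^N$ satisfied $\varphi(n,Z_n)$ for all $n$, then by absoluteness of well-foundedness between $\mathcal M$ and $N$ (both see all of $V_{\kappa+1}^N$), each $Z_n$ truly codes $L_{\kappa^{(+n+1)}}$ with $\gamma=\kappa^{(+n)}$. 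Thus in $N$ we could define a surjection from $\kappa\times\omega$ onto $\kappa^{(+\omega)}$, collapsing $(\kappa^+)^N$. But $Z$ is a set of ordinals in $N$, so $Z\in L[G_m]$ for some $m$, and $\overline{\P}_m$ preserves $\kappa^{(+\omega)}$, which is a contradiction.

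The main obstacle I expect is the complexity bookkeeping in the definition of $\varphi$: ensuring that the correctness of $L_\beta$ about $\kappa^{(+n)}$ is expressible with a single $\Pi^1_1$ quantifier, and checking that $\Pi^1_1$ well-foundedness in $\mathcal M$ matches genuine well-foundedness. The latter holds because $\mathcal M$ contains all subsets of $\kappa$ from $N$, and $N$ computes well-foundedness correctly.
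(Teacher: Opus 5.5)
Your model, your check of the hypothesis, and your final contradiction via $Z\in L[G_m]$ all follow the paper's outline. The gap is in the formula $\varphi(n,X)$: as written, its instance of class choice actually \emph{holds} in $\mathcal M$.

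The condition $L_\beta\models$ ``every ordinal has size $\le\gamma$'' only bounds $\beta$ from above. It cannot force $L_\beta$ to be tall enough to contain a collapse of $\gamma$, so your claim that $L_\beta$ is correct about cardinals $\le\gamma$ does not follow. Concretely, fix $n$ and take $H\prec L_{\kappa^{(+n+1)}}$ with $|H|=\kappa$, $\kappa+1\subseteq H$ and $\kappa^{(+n)}\in H$. By condensation the collapse of $H$ is some $L_{\bar\beta}$ with $\bar\beta<(\kappa^+)^L$. Let $\bar\gamma<(\kappa^+)^L$ be the image of $\kappa^{(+n)}$. Then $L_{\bar\beta}\models\bar\gamma=\kappa^{(+n)}$, and $L_{\bar\beta}\models$ ``every ordinal has size $\le\bar\gamma$''. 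So a code of $(L_{\bar\beta},\bar\gamma)$ satisfies $\varphi(n,X)$. Choosing inside $L$ the $<_L$-least such code for each $n$ gives a single $Z\in V^L_{\kappa+1}\subseteq V^N_{\kappa+1}$ with $\mathcal M\models\varphi(n,Z_n)$ for every $n$. Hence your third step, ``each $Z_n$ truly codes $L_{\kappa^{(+n+1)}}$'', is false. More generally, no condition internal to the coded $L_\beta$ can certify that it computes $L$-cardinals correctly.

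The missing observation is the one the paper's proof relies on. In second-order set theory, unlike in $\fa$, well-foundedness of a class relation $E_X$ on $\kappa$ is first-order. If a class $Y$ has no $E_X$-minimal element, global choice lets you build a descending $\omega$-sequence through $Y$ by first-order recursion. That sequence is a set by replacement, since it is bounded in the regular $\kappa$.

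This frees the single universal class quantifier for a correctness clause, as in the paper: $X$ codes some $L_\alpha$, and every class $Y$ coding some $L_\beta$ with $\beta>\alpha$ has $L_\beta\models\alpha=\kappa^{(+n)}$. This pins down the true $L_{\kappa^{(+n)}}$. The classes of $\mathcal M$ code $L_\beta$ for every $\beta<\kappa^{(+\omega)}=(\kappa^+)^N$, for instance $L_{\kappa^{(+m)}}$ for large $m$. Such an $L_{\kappa^{(+m)}}$ computes all cardinals below it correctly, so it exposes any fake $\alpha$. With this formula, your hypothesis check and your contradiction go through.

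You cannot simply append this clause to your formula while keeping well-foundedness $\Pi^1_1$. The phrase ``$Y$ codes some $L_\beta$'' sits in the antecedent, so its well-foundedness clause becomes $\Sigma^1_1$ there, and the formula rises to $\Pi^1_2$. That is exactly the $\fa$ situation.
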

\begin{proof}
Let $\mathcal M=\<V_\kappa,\in,V_{\kappa+1}^N>$, where $N$ is a symmetric model as described above. Each $L_{\kappa^{(+n)^L}}$ is coded in $V_{\kappa+1}^N$, but $L_{\kappa^{(+\omega)^L}}$ is not, since $\kappa^{(+\omega)^L}$ is not collapsed in $N$. The assertion that a class codes $L_{\kappa^{(+n)^L}}$ is $\Pi_1^1$ because to check for well-foundedness is now $\Pi^1_0$ as opposed to $\Pi_1^1$ in the case of models of $\fa$. Thus, the $\Pi^1_1$ instance of the class choice scheme
$$\left[\forall n\in\omega\,\exists X\, X\text{ codes }L_{\kappa^{(+n)^L}}\right]\rightarrow\exists Z\,\forall n\in\omega\ \bigl(Z_n\text{ codes }L_{\kappa^{(+n)^L}}\bigr),$$
fails in $\mathcal M$ because if such $Z$ existed, $\mathcal M$ would be able to construct $L_{\kappa^{(+\omega)^L}}$, which it cannot even in $N$.
\end{proof}

We shall now aim to improve theorem~\ref{th:FailureChoiceSchemePi11} by developing techniques that will enable us to show that the class choice scheme can fail in a model of $\KM$ even in a first-order instance. The general idea will be to produce a model $V\models\ZFC$ with an inaccessible cardinal $\kappa$ and $\omega$ many $\kappa$-Suslin trees with the property that forcing with any finite subcollection of them does not add branches to the rest. We can then consider the symmetric model arising from forcing with the product of the trees in the same fashion as the one built from the product of the collapse forcing and having the property that its sets of ordinals are precisely those added by the initial segments of the forcing. The symmetric model will then have branches for every one of the $\omega$ many trees, but it won't be able to collect them.

Suppose that $\kappa$ is a regular cardinal. A \emph{$\kappa$-tree} is a partial order such that the predecessors of any node are well-ordered of order type less than $\kappa$, with every such order type realized, and there are fewer than $\kappa$ many elements on any particular level. Such a tree is $\kappa$-\emph{Aronzajn}, if it has no branches of size $\kappa$, and it is $\kappa$-\emph{Suslin} tree if it also has no antichains of size $\kappa$. A tree is said to be \emph{homogeneous} if for any two nodes on the same level, there is an automorphism mapping one to the other. Such a tree is weakly homogeneous as a partial order in the sense of appendix subsection~\ref{subsec:symmetricModels}.

Using lemma~\ref{le:trees}, we can assume that we are working in a universe $V$ with an inaccessible cardinal $\kappa$ which has an $\omega$-sequence $\vec T=\<T_n\mid n<\omega>$ of homogeneous $\kappa$-Suslin trees with the property that for every $n<\omega$, the product $\Pi_{m<n} T_m$ has the $\kappa$-cc. It follows that forcing with any such product does not add branches to any of the later trees $T_i$ with $i\geq n$, since this would violate the chain condition for the product including that later tree. Consider the full-support product $\P=\Pi_{n\in\omega}T_n$ and notice that, because each $T_n$ is weakly homogeneous when viewed as a partial order, $\P$ is the type of poset described in subsection~\ref{subsec:symmetricModels} of the appendix. Let $G\subseteq \P$ be $V$-generic and $G_n$ be the restriction of $G$ to $\P_n=\Pi_{m<n}T_m$. So we can obtain a symmetric model $N$ of $V[G]$ with the property that its sets of ordinals are precisely those added by the proper initial segments of the product $\P$. Thus, every tree $T_m$ has a branch in $N$, but there is no collecting set with a branch of every tree $T_m$ because an initial segment $\P_n$ adds branches only to those trees. Since $\P$ is $\ltkappa$-distributive by lemma~\ref{le:trees}, we have $V_\kappa^{V[G]}=V_\kappa=V_\kappa^N$ and so it follows that $V_{\kappa+1}^N=\Union_{n<\omega}V_{\kappa+1}^{V[G_n]}$. Clearly $\kappa$ is inaccessible in $N$ and $N$ has a well-ordering of $V_\kappa$. Thus, we have satisfied the hypothesis of lemma~\ref{le:ZFmodelOfKM}, and so $\<V_\kappa,\in,V_{\kappa+1}^N>\models\KM$.

\begin{theorem}\label{th:firstOrderParameterFailure}
\emph{($\ZFC\,+\,$there is an inaccessible cardinal)} There is a model of $\KM$ in which an instance
$$\forall n{\in}\omega\,\exists X\,\varphi(n,X,A)\rightarrow \exists Z\,\forall n{\in}\omega\,\varphi(n,Z_n,A)$$
of the class choice scheme fails for a first-order formula $\varphi$ with a class parameter~$A$.
\end{theorem}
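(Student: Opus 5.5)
We take $\mathcal M=\<V_\kappa,\in,V_{\kappa+1}^N>$, where $N$ is the symmetric model just built from the full-support product $\P=\Pi_{n<\omega}T_n$ of the homogeneous $\kappa$-Suslin trees. We already know that $\mathcal M\models\KM$, by lemma~\ref{le:ZFmodelOfKM}. The class parameter will code the sequence of trees. Since each $T_n$ is a $\kappa$-tree, we may take (after relabeling in $V$) $T_n\subseteq V_\kappa$, and let $A=\set{(n,t,s)\mid t\leq_{T_n}s}$. This $A$ is a subset of $V_\kappa$ lying in $V$, hence $A\in V_{\kappa+1}^N$.

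The formula $\varphi(n,X,A)$ will say that $X$ is a cofinal branch through $T_n$. Concretely, $X\subseteq A_n$; $X$ is linearly ordered by $A_n$; $X$ is downward closed; and $X$ meets every level, that is, $\forall\alpha\in\Ord\,\exists t\in X$ ($t$ is on level $\alpha$ of $A_n$). All quantifiers here range over sets, so $\varphi$ is first-order in the class parameters $X$ and $A$. Since the ordinals of $\mathcal M$ are exactly $\kappa$, this formula expresses in $\mathcal M$ exactly that $X$ is a $\kappa$-branch of $T_n$.

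\textbf{Hypothesis holds.} Fix $n$. The generic $G$ restricted to the $n$th coordinate yields a cofinal branch $b_n$ of $T_n$, and $b_n\in V[G_{n+1}]$. Under the identification $T_n\subseteq V_\kappa$, $b_n$ is a subset of $V_\kappa$. It lies in $V_{\kappa+1}^{V[G_{n+1}]}\subseteq V_{\kappa+1}^N$, using the equality $V_{\kappa+1}^N=\Union_m V_{\kappa+1}^{V[G_m]}$ noted above. So $\mathcal M\models\forall n\in\omega\,\exists X\,\varphi(n,X,A)$.

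\textbf{Conclusion fails.} Suppose $Z\in V_{\kappa+1}^N$ satisfied $\forall n\,\varphi(n,Z_n,A)$ in $\mathcal M$. Then $Z\in V[G_m]$ for some $m$, so $Z_m$ is a $\kappa$-branch through $T_m$ lying in $V[G_m]$, the extension by the product $\Pi_{k<m}T_k$. But forcing with that product adds no branch to $T_m$: a branch would give an antichain of size $\kappa$ in $\Pi_{k\leq m}T_k$, contradicting the $\kappa$-cc supplied by lemma~\ref{le:trees}. Also $T_m$ has no branch in $V$, since it is Suslin. This is a contradiction.

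\textbf{Main point of care.} The delicate step is the identity $V_{\kappa+1}^N=\Union_m V_{\kappa+1}^{V[G_m]}$ and the exact branch-preservation fact. We take the former from the appendix construction, using the ${<}\kappa$-distributivity of $\P$ so that $V_\kappa$ is unchanged. For the latter, it suffices to check that a generic branch of $T_m$ over $V[G_m]$ cannot already exist there. The standard argument runs as follows. Suppose a $\Pi_{k<m}T_k$-name $\dot b$ is forced to be a cofinal branch of $T_m$. Pick, for each $\alpha<\kappa$, a condition $p_\alpha$ deciding the node of $\dot b$ on level $\alpha$; call that node $t_\alpha$. Then the pairs $(p_\alpha, t'_\alpha)$, where $t'_\alpha$ is a node of $T_m$ incomparable with $t_\alpha$ above a splitting, produce $\kappa$ many pairwise incompatible conditions in the longer product. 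This contradicts the $\kappa$-cc. We expect this to go through using the Suslin and chain-condition properties from lemma~\ref{le:trees}.
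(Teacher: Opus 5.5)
Your proposal is correct and follows the paper's proof. It uses the same model $\langle V_\kappa,\in,V_{\kappa+1}^N\rangle$, the tree sequence as the class parameter, and the first-order branch formula. The contradiction comes, as in the paper, from $V_{\kappa+1}^N=\bigcup_m V_{\kappa+1}^{V[G_m]}$ together with the fact that $\Pi_{k<m}T_k$ adds no branch to $T_m$.

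The one loose spot is your optional sketch of that last fact. Mere incomparability of $t'_\alpha$ with $t_\alpha$ does not make the pairs pairwise incompatible. You need $t'_\alpha$ to be an immediate successor of $t_\alpha$ different from the node of $\dot b$ at level $\alpha+1$ decided by $p_\alpha$. You can also simply cite lemma~\ref{le:trees}, which states the no-new-branches property outright.
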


\begin{proof}
Let $\mathcal M=\<V_\kappa,\in,V_{\kappa+1}^N>$, where $N$ is the symmetric model constructed as above. The sequence $\vec T$ is coded in $V_{\kappa+1}^N$ and each $T_m$ has a branch in $V_{\kappa+1}^N$, but $V_{\kappa+1}^N$ cannot have a class collecting at least one branch from every $T_m$. The assertion that a class is a branch through the tree $T_n$ is $\Pi_0^1$ in the parameter $\vec T$. Thus, the $\Pi^1_0$ instance of the class choice scheme
$$\psi(\vec T):=\forall n\in\omega\,\exists X\,X\text{ is a branch of }T_n\rightarrow\exists Z\,\forall n\in\omega\, Z_n\text{ is a branch of }T_n$$
fails in $\mathcal M$.
\end{proof}
The failing instance of the class choice scheme produced in the proof of theorem~\ref{th:firstOrderParameterFailure} uses the parameter $\vec T$, which is the sequence of trees whose branches cannot be collected. A slightly more involved construction starting with a Mahlo cardinal eliminates the parameter and produces a first-order failure of the parameter-free choice scheme. The idea is to code the sequence $\vec T$ into the continuum function below $\kappa$ before forcing with the trees to obtain the required symmetric submodel. We will argue that following the coding forcing, the sequence $\vec T$ retains its key original properties: the trees $T_n$ are $\kappa$-Aronzajn, forcing with any initial segment of them is $\ltkappa$-distributive and does not add branches to the later trees.\goodbreak

\begin{theorem}\label{firstOrderParameterFreeFailure}
\emph{($\ZFC\,+\,$there is a Mahlo cardinal)} There is a model of $\KM$ in which an instance
$$\forall n{\in}\omega\,\exists X\,\varphi(n,X)\rightarrow \exists Z\,\forall n{\in}\omega\,\varphi(n,Z_n)$$
of the parameter-free choice scheme fails for a first-order formula $\varphi(x,X)$.
\end{theorem}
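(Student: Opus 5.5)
We will follow the plan just sketched: code the sequence $\vec T$ into the continuum function below $\kappa$, so that $\vec T$ becomes first-order definable without parameters over $V_\kappa$. Then we run the symmetric-model argument of theorem~\ref{th:firstOrderParameterFailure}.

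\emph{Setting up the trees.} Start with a Mahlo cardinal $\kappa$. Using lemma~\ref{le:trees}, or rather its construction adapted to Mahlo $\kappa$, fix an $\omega$-sequence $\vec T=\<T_n\mid n<\omega>$ of homogeneous $\kappa$-Suslin trees whose finite products $\Pi_{m<n}T_m$ are $\kappa$-cc and whose full product is $\ltkappa$-distributive. Each $T_n$ can be taken as a subset of $\kappa$, so $\vec T$ amounts to a single set $B\subseteq\kappa$. Using GCH, which we may arrange first, we then force with an Easton-support iteration $\Q$ of length $\kappa$. At inaccessible stages $\gamma<\kappa$ (or along some fixed sparse class of regular cardinals) it adds a Cohen subset to $\gamma^+$ exactly when $\gamma\in B$, and otherwise it does nothing. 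In $V[H]$ we then have $B=\set{\gamma<\kappa\mid 2^\gamma=\gamma^{++}}$ on the coding points, so $B$, and hence each $T_n$ and $\vec T$ itself, is first-order definable over $V_\kappa^{V[H]}$ without parameters. The map $n\mapsto T_n$ is also definable over $V_\kappa$ uniformly in $n$. Since $\kappa$ is Mahlo, $\Q$ is $\kappa$-cc and preserves the inaccessibility of $\kappa$, indeed its Mahloness.

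\emph{Preservation: the main obstacle.} We must show that in $V[H]$ the trees keep the properties needed for the symmetric construction. These are: each $T_n$ is $\kappa$-Aronszajn (Suslinity is not needed); the full-support product $\P=\Pi_nT_n$ is $\ltkappa$-distributive; and forcing with $\P_n=\Pi_{m<n}T_m$ adds no branch to any $T_i$ with $i\ge n$. Homogeneity is absolute, being witnessed by automorphisms in the ground model.

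We plan to establish these by factoring $\Q\times\P$ in the ground model. Since $\Q$ is $\kappa$-cc and $\P_{n}\times T_i$ is $\kappa$-cc, a new branch of $T_i$ added by $\Q\times\P_n$ would yield an antichain of size $\kappa$. To get this, first check that $\Q$ adds no branch to the trees. The approach is to note that $\Q$ has size $\kappa$ and is $\kappa$-cc, and that a $\Q$-name for a cofinal branch, by a Mahlo reflection argument, reflects to an inaccessible $\gamma$ where the tail of $\Q$ is $\le\gamma$-closed. This would give branches through the $\gamma$th level that are decided too early, contradicting the Suslinity of $T_i$ in $V$.

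For distributivity, we use that for every $\gamma<\kappa$, $\Q$ factors as $\Q_\gamma*\dot\Q^\gamma$ with $|\Q_\gamma|<\kappa$ small and $\dot\Q^\gamma$ being ${\le}\gamma$-closed. Small forcing preserves the $\kappa$-cc of finite products and preserves the distributivity argument of lemma~\ref{le:trees}, because that argument only uses a Mahlo/stationary set of reflection points, and such a set survives small forcing. The closed tail commutes with $\P$, since it adds no new $\gamma$-sequences. This preservation is the step we expect to need the most care, and we would first try to redo the proof of lemma~\ref{le:trees} directly in $V[H]$ rather than transfer it.

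\emph{Finishing.} Working over $V[H]$, we take $G\subseteq\P$ generic and form the symmetric model $N$ whose sets of ordinals are exactly those in some $V[H][G_n]$. By distributivity, $V_\kappa^N=V_\kappa^{V[H]}$, so the coding of $\vec T$ in the continuum function is unchanged. Also $V_{\kappa+1}^N=\Union_nV_{\kappa+1}^{V[H][G_n]}$. By lemma~\ref{le:ZFmodelOfKM}, $\mathcal M=\<V_\kappa,\in,V_{\kappa+1}^N>\models\KM$.

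Let $\varphi(n,X)$ be the first-order formula saying that $X$ is a cofinal branch through the $n$th tree defined from the continuum pattern. Every $n$ has a witness in $\mathcal M$. But a class $Z$ with $\varphi(n,Z_n)$ for all $n$ would lie in some $V[H][G_n]$. It would then give a branch of $T_n$ added by $\P_n$, which is impossible by the preservation step. So this parameter-free first-order instance of class choice fails in $\mathcal M$.
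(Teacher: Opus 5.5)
Your overall plan is the paper's: code $\vec T$ into the continuum function below the Mahlo cardinal with an Easton-support forcing $\Q$, check that the trees keep the properties used in theorem~\ref{th:firstOrderParameterFailure}, and rerun that argument. Two steps fail as written, though.

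First, the coding. Adding a Cohen subset to $\gamma^+$ is ${\le}\gamma$-closed and, under \GCH, has size $\gamma^+$. So it changes neither $2^\gamma$ nor $2^{\gamma^+}$, and $\set{\gamma\mid 2^\gamma=\gamma^{++}}$ would not detect $B$ at all. You need $\Add(\gamma,\gamma^{++})$ at the coding points $\gamma\in B$, as in the paper. The paper also uses a product rather than an iteration, which matches the form in which lemmas~\ref{le:EastonProductKappaCC} and~\ref{le:MahloEastonProductNewSubsets} are stated.

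Second, and more seriously, the preservation step is not established, and it is essentially the whole content of this proof. You infer that, because $\Q$ and $\P_n\times T_i$ are each $\kappa$-cc, a branch of $T_i$ added by $\Q\times\P_n$ yields an antichain of size $\kappa$. This tacitly assumes that a product of two $\kappa$-cc posets is $\kappa$-cc, which is false; $T_i\times T_i$ is a counterexample. Also, showing that $\Q$ adds no branch over $V$ checks the wrong model: the branch to be excluded lives in $V[H][G_n]$, after $\P_n$ has been forced.

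The missing idea is to apply the Mahlo lemmas \emph{over $V[G_n]$}. Since $\P_n$ is $\kappa$-cc and $\ltkappa$-distributive in $V$, $\kappa$ is still Mahlo in $V[G_n]$, and $\Q$ is literally the same Easton product there. Then:
\begin{itemize}
\item lemma~\ref{le:EastonProductKappaCC} makes $\Q$ $\kappa$-cc over $V[G_n]$;
\item lemma~\ref{le:MahloEastonProductNewSubsets} shows that $\Q$ adds no branch to $T_i$, $i\geq n$, over $V[G_n]$. Every proper initial segment of such a branch is determined by a single node and so already lies in $V[G_n]$, where $T_i$ has no branch.
\end{itemize}
Then $V[H][G_n]=V[G_n][H]$ finishes.

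The same $\kappa$-cc also gives distributivity. Nice $\Q$-names over $V[G_n]$ for ${<}\kappa$-sequences of ordinals lie in $V_\kappa^{V[G_n]}=V_\kappa$, so $\P_n$ adds no such sequences over $V[H]$. The same holds for the full product $\P$, which lemma~\ref{le:trees} makes $\kappa$-cc and $\ltkappa$-distributive in $V$.

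Your own distributivity sketch misdescribes lemma~\ref{le:trees}: there, distributivity comes from strategic closure, not from a reflection argument. It can be rescued, however. A $\kappa$-cc product of normal $\kappa$-trees is automatically $\ltkappa$-distributive when $\kappa$ is regular, and with that your closed-tail factoring goes through.
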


\begin{proof}
By lemma~\ref{le:trees}, we can work in a universe $V$ that has a Mahlo cardinal $\kappa$ and an $\omega$-sequence $\vec T=\<T_n\mid n<\omega>$ of homogeneous $\kappa$-Suslin trees such that the product of any initial segment of them has the $\kappa$-cc. Because the forcing to add the sequence $\vec T$ is $\ltkappa$-distributive, we can also assume that $\GCH$ holds below $\kappa$ (by forcing to add the trees to a model of $\GCH$). Let $A$ be a subset of the regular cardinals below $\kappa$ coding $\vec T$ in some definable manner. Let $\s$ be the Easton-support product of length $\kappa$ which forces with $\Add(\alpha,\alpha^{++})$ for every regular $\alpha\in A$ and let $H\subseteq \s$ be $V$-generic. By lemma~\ref{le:EastonProductKappaCC}, $\s$ has the $\kappa$-cc and the usual arguments show furthermore that $\kappa$ remains Mahlo in $V[H]$. Each $T_n$ remains a $\kappa$-Aronzajn tree in $V[H]$ because by lemma~\ref{le:MahloEastonProductNewSubsets}, $\s$ cannot add a new subset of $\kappa$, all whose initial segments are in $V$. Now let's consider a product $\P_n=\Pi_{m<n}T_m$ and let $G_n\subseteq \P_n$ be $V$-generic. Since $\s\times \P_n$ is a product, $V[H][G_n]=V[G_n][H]$. Observe that $\kappa$ is Mahlo in $V[G_n]$ because $\P_n$ has the $\kappa$-cc in $V$ and $\s$ continues to be an Easton-support product in $V[G_n]$ because $\P_n$ is $\ltkappa$-distributive in $V$. Thus, $\s$  has the $\kappa$-cc in $V[G_n]$ and it follows that $\kappa$ remains Mahlo in $V[G_n][H]$. In particular, $\kappa$ is regular there, and so it follows that $\P_n$ continues to be $\ltkappa$-distributive in $V[H]$. Finally, no $T_i$ with $i\geq n$ can have a branch in $V[G_n][H]$ because it doesn't have it in $V[G_n]$ and $\s$ cannot add a branch to $T_i$ by lemma~\ref{le:MahloEastonProductNewSubsets}. This completes the argument that the sequence $\vec T$ continues to have all the properties in $V[H]$ necessary to carry out the proof of theorem~\ref{th:firstOrderParameterFailure}, but now we don't need to use $\vec T$ as a parameter to refer to the trees $T_n$.
\end{proof}
\section{Separating fragments of the class choice scheme over \KM}\label{sec:separatingChoiceSchemes}
In this section, we separate the class choice scheme from the set-indexed class choice scheme by showing that the set-indexed class choice scheme can hold in a model of $\KM$, while a first-order instance of the class choice scheme fails. We shall also separate the class choice scheme from the parameter-free choice scheme by showing that the parameter-free choice scheme can hold in a model of $\KM$, while a $\Pi_1^1$-instance of the class choice scheme fails.
\begin{theorem}
\emph{($\ZFC\,+\,$there is an inaccessible cardinal)} There is a model of $\KM$ in which the set-indexed class choice scheme holds, but a first-order instance of the class choice scheme fails.
\end{theorem}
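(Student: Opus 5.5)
We will modify the construction behind theorem~\ref{th:firstOrderParameterFailure}, replacing the $\omega$-sequence of trees by an $\Ord$-indexed family, here a $\kappa$-indexed one. The symmetric model will then admit all set-sized collections of branches but not a collection of branches through every tree. Working in a universe $V$ with an inaccessible $\kappa$, we first aim for an analogue of lemma~\ref{le:trees}: a sequence $\vec T=\<T_\alpha\mid \alpha<\kappa>$ of homogeneous $\kappa$-Suslin trees such that every product $\Pi_{\alpha\in s}T_\alpha$ with $s\of\kappa$ of size ${<}\kappa$ (bounded in $\kappa$) is $\ltkappa$-distributive and has the $\kappa$-cc. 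We will take products with ${<}\kappa$-support; inaccessibility of $\kappa$ should make the $\Delta$-system arguments go through. Then we form $\P=\Pi_{\alpha<\kappa}T_\alpha$ with the appropriate support, take $G\of\P$ generic, and let $N$ be the symmetric submodel whose sets of ordinals are exactly those added by $G\restrict s$ for bounded $s\of\kappa$. Here the filter of subgroups is generated by pointwise stabilizers of bounded sets of coordinates, rather than finite initial segments as in the appendix construction. Distributivity gives $V_\kappa^N=V_\kappa$, and $V_{\kappa+1}^N=\Union_{\beta<\kappa}V_{\kappa+1}^{V[G\restrict\beta]}$. Since $\kappa$ stays inaccessible and $V_\kappa$ stays well-orderable, lemma~\ref{le:ZFmodelOfKM} gives $\mathcal M=\<V_\kappa,\in,V_{\kappa+1}^N>\models\KM$.

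The first-order failure is then exactly as in theorem~\ref{th:firstOrderParameterFailure}. With parameter $\vec T$, which lies in $V_{\kappa+1}$, every $T_\alpha$ has a branch in $\mathcal M$. But a class $Z$ with $Z_\alpha$ a branch of $T_\alpha$ for all $\alpha<\kappa$ would lie in some $V[G\restrict\beta]$. That model adds no branch to $T_\beta$, because $\P\restrict(\beta+1)$ has the $\kappa$-cc. So the instance indexed by $\alpha\in\Ord^{\mathcal M}=\kappa$ fails.

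For the set-indexed class choice scheme, suppose $\forall x\in a\,\exists X\,\varphi(x,X,A)$ holds in $\mathcal M$, where $a\in V_\kappa$. Then $|a|=\lambda<\kappa$, and $A\in V[G\restrict\beta_0]$. For each $x\in a$ we pick some bounded $\beta_x$ and a witness in $V[G\restrict\beta_x]$. The difficulty is that $N$ lacks choice, so we must choose these names without a choice function. The plan is to use the regularity of $\kappa$ in $V[G]$, where choice holds, to find $\beta=\sup_x\beta_x<\kappa$. We then pass to $V[G\restrict\beta]$, a $\ZFC$ model containing all the witnesses, and pick them there using a well-order of $V_{\kappa+1}^{V[G\restrict\beta]}$, forming $Z\in V[G\restrict\beta]\cap V_{\kappa+1}\of V_{\kappa+1}^N$. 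We need $\varphi(x,Z_x,A)$ evaluated in $\mathcal M$, not in $V[G\restrict\beta]$, but this is automatic because $Z_x$ is simply one of the witnesses.

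The subtle point is that ``the set of $X\in V_{\kappa+1}^N$ with $\mathcal M\models\varphi(x,X,A)$'' is definable in $V[G]$ from $N$. This requires $V_{\kappa+1}^N$ to be definable in $V[G]$, say from $\vec T$ and $G$ as the union above, so that the $\beta_x$ exist in $V[G]$ and $\kappa$-regularity applies there. The main obstacle is the first step: building the $\kappa$-length sequence of Suslin trees with the product properties. A natural approach is to force them by a ${<}\kappa$-closed, $\kappa^+$-cc forcing adding $\kappa$ many homogeneous trees by initial segments, mimicking lemma~\ref{le:trees}, and then argue via a generic-branch/term-forcing argument that bounded subproducts are $\ltkappa$-distributive and $\kappa$-cc and add no branch to the remaining trees.
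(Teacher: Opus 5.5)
Your overall construction is the paper's. The $\kappa$-sequence of homogeneous $\kappa$-Suslin trees you plan to build is already supplied by Lemma~\ref{le:trees}, so that step is not an obstacle; the natural forcing is in any case only ${<}\kappa$-strategically closed (Lemma~\ref{le:tree-forcing-strat-closed}), not ${<}\kappa$-closed. The symmetric model, the failure of the $\kappa$-indexed instance, and the use of regularity of $\kappa$ in $V[G]$ to bound the $\beta_x$ all match the paper.

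The last step of the set-indexed argument, however, has a gap. You cannot ``pick the witnesses in $V[G\restrict\beta]$.'' The selection criterion $\mathcal M\models\varphi(x,X,A)$ quantifies over $V_{\kappa+1}^N=\Union_{\gamma<\kappa}V_{\kappa+1}^{V[G\restrict\gamma]}$, which $V[G\restrict\beta]$ cannot define. So that model does not know which of its subsets of $V_\kappa$ are witnesses. Choosing the $W$-least witnesses, with $W\in V[G\restrict\beta]$ a well-order of $V_{\kappa+1}^{V[G\restrict\beta]}$, therefore happens in $V[G]$, and the resulting $Z$ is a priori only an element of $V[G]$. Your remark that $\varphi(x,Z_x,A)$ is automatic shows that $Z$ works, not that $Z$ is a class of $\mathcal M$, and the latter is the whole content of the scheme. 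Note that $V[G]$ also contains a choice of branches through every $T_\alpha$, yet that sequence is not in $N$.

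The paper closes this with tail distributivity. The full product $\P$ is ${<}\kappa$-distributive over $V$; this is part of what Lemma~\ref{le:trees} gives, and you need it anyway for regularity of $\kappa$ in $V[G]$. Hence the map sending $x\in a$ to the $W$-rank of its chosen witness is a $\lambda$-sequence of ordinals in $V[G]$, so it lies in $V$. The chosen sequence is then in $V[G\restrict\beta]$, and its code is in $V_{\kappa+1}^N$.

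Alternatively, you can avoid distributivity altogether. We have $V[G\restrict\beta]\of N$, since $G\restrict\beta$ and all $\P\restrict\beta$-names are hereditarily symmetric, so $W\in N$. Also $\mathcal M=\langle V_\kappa,\in,V_{\kappa+1}^N\rangle$ is a set in $N$. So the sequence of $W$-least witnesses is definable in $N$ from $W$, $a$, $A$ and $\mathcal M$, and belongs to $N$ by Separation. Its code is a subset of $V_\kappa$ in $N$, i.e.\ an element of $V_{\kappa+1}^N$. One of these two arguments has to be supplied; as written, $Z\in V[G\restrict\beta]$ is asserted without justification.
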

\begin{proof}
Using lemma~\ref{le:trees}, we may assume that we are working in a universe $V$ with an inaccessible cardinal $\kappa$ which has a $\kappa$-sequence $\vec T=\<T_\xi\mid \xi<\kappa>$ of homogeneous $\kappa$-Suslin trees with the property that for every $\xi<\kappa$, the product $\Pi_{\eta<\xi} T_\eta$ has the $\kappa$-cc. %and so in particular forcing with it does not add branches to any $T_\eta$ with $\eta\geq \xi$. 
It follows that none of these products add branches through the later trees $T_\eta$ for $\eta\geq\xi$, since such a branch would prevent the $\kappa$-cc of the product including that later tree. 

Consider now the full bounded-support product $\P=\Pi_{\xi<\kappa}T_\xi$. Let $G\subseteq \P$ be $V$-generic and $G_\xi$ be the restriction of $G$ to $\P_\xi=\Pi_{\eta<\xi}T_\xi$. Since the conditions described in the section~\ref{subsec:symmetricModels} of the appendix still apply, we can obtain a symmetric model $N$ of $V[G]$ with the property that its sets of ordinals are precisely those added by the proper initial segments of the product $\P$. Thus, every tree $T_\eta$ has a branch in $N$, but there is no collecting set with a branch of every tree $T_\eta$. Since $\P$ is $\ltkappa$-distributive by lemma~\ref{le:trees}, we have $V_\kappa^{V[G]}=V_\kappa=V_\kappa^N$ and so it follows that $V_{\kappa+1}^N=\Union_{\xi<\kappa}V_{\kappa+1}^{V[G_\xi]}$. Clearly $\kappa$ is inaccessible in $N$ and $N$ has a well-ordering of $V_\kappa$, and so by lemma~\ref{le:ZFmodelOfKM} we conclude $\<V_\kappa,\in,V_{\kappa+1}^N>\models\KM$. It is clear that the class choice scheme fails in this model for the first-order assertion (with parameter $\vec T$) that each tree $T_\xi$ has a branch. 

It remains to show that the set-indexed class choice scheme holds. For this, suppose that $\<V_\kappa,\in,V_{\kappa+1}^N>\models\forall x\in a\,\exists X\,\varphi(x,X,A)$ for some $a\in V_\kappa$ and $A\in V_{\kappa+1}^N$. Working in $V[G]$, choose for each $i\in a$, a witnessing $X_i\in \Union_{\xi<\kappa}V_{\kappa+1}^{V[G_\xi]}$. Since $a$ has size less than $\kappa$ and $\kappa$ is regular in $V[G]$, there is some $\xi<\kappa$ such that all $X_i$ are in $V[G_\xi]$. But now since the tail forcing $\Pi_{\xi<\eta<\kappa}T_\xi$ is $\ltkappa$-distributive, it follows that the sequence $\<X_i\mid i\in a>$ is itself in $V[G_\xi]$ and hence it is coded in $V_{\kappa+1}^{V[G_\xi]}$. Thus, this code is in $V_{\kappa+1}^N$, and so we can realize the set-indexed instance of the class choice scheme.
\end{proof}
Again, by starting with a stronger hypothesis, we can eliminate the parameter from the class choice scheme failure.
\begin{theorem}
\emph{($\ZFC\,+\,$there is a Mahlo cardinal)} There is a model of $\KM$ in which the set-indexed class choice scheme holds, but a first-order instance of the parameter-free choice scheme fails.
\end{theorem}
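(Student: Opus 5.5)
We will combine the previous theorem's $\kappa$-length product of trees with the coding argument from theorem~\ref{firstOrderParameterFreeFailure}.

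\textbf{Setting up the trees and coding them.} Using lemma~\ref{le:trees}, we work in a universe $V$ with a Mahlo cardinal $\kappa$, $\GCH$ below $\kappa$, and a $\kappa$-sequence $\vec T=\<T_\xi\mid\xi<\kappa>$ of homogeneous $\kappa$-Suslin trees such that every product $\P_\xi=\Pi_{\eta<\xi}T_\eta$ has the $\kappa$-cc. As before, this implies $\P_\xi$ adds no branch to any $T_\eta$ with $\eta\geq\xi$. Since $\vec T$ is a subset of $V_\kappa$ of size $\kappa$, we fix a set $A$ of regular cardinals below $\kappa$ coding $\vec T$ in a definable manner. We then force with the Easton-support product $\s$ adding $\alpha^{++}$ Cohen subsets to each regular $\alpha\in A$, obtaining $V[H]$. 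In $V[H]$, $A$ and hence $\vec T$ become first-order definable over $V_\kappa^{V[H]}$ without parameters, read off from the continuum function.

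\textbf{Preserving the tree properties.} We must check that in $V[H]$ the sequence $\vec T$ keeps the properties used in the previous proof:
\begin{itemize}
\item each $T_\xi$ is $\kappa$-Aronszajn;
\item each $\P_\xi$ is ${<}\kappa$-distributive;
\item $\P_\xi$ adds no branches to later trees;
\item the full bounded-support product $\P$ is ${<}\kappa$-distributive.
\end{itemize}
The argument is the one in theorem~\ref{firstOrderParameterFreeFailure}, now run for $\P_\xi$ with $\xi<\kappa$ arbitrary rather than finite:
\begin{itemize}
\item $\P_\xi$ has the $\kappa$-cc and is ${<}\kappa$-distributive in $V$, so $\kappa$ stays Mahlo in $V[G_\xi]$.
\item $\s$ remains an Easton-support product in $V[G_\xi]$, so it has the $\kappa$-cc there, and $\kappa$ is Mahlo in $V[G_\xi][H]=V[H][G_\xi]$. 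This gives distributivity of $\P_\xi$ over $V[H]$.
\item By lemma~\ref{le:MahloEastonProductNewSubsets}, $\s$ adds no new subset of $\kappa$ all of whose initial segments lie in the ground model. Hence $\s$ adds no branches to $T_\eta$ over $V[G_\xi]$.
\end{itemize}
For distributivity of the full product $\P$ over $V[H]$, we would reprove the relevant clause of lemma~\ref{le:trees} in $V[H]$. I expect this to be the main obstacle, since lemma~\ref{le:trees} was established in $V$. The route I would try first is to argue that $\P\times\s\cong\s\times\P$ and that $\P$ is ${<}\kappa$-distributive in $V$. Then I would show $\s$ is still $\kappa$-cc with Easton support in $V[G]$, because $\P$ adds no bounded subsets. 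Hence any ${<}\kappa$-sequence of ordinals in $V[H][G]$ has an $\s$-name of size ${<}\kappa$ in $V[G]$, which lies in $V$ and so in $V[H]$.

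\textbf{Building the model and verifying it.} Working over $V[H]$, we form the symmetric model $N\subseteq V[H][G]$ exactly as in the previous theorem. Then $V_{\kappa+1}^N=\Union_{\xi<\kappa}V_{\kappa+1}^{V[H][G_\xi]}$ and $V_\kappa^N=V_\kappa^{V[H]}$, so lemma~\ref{le:ZFmodelOfKM} gives $\<V_\kappa,\in,V_{\kappa+1}^N>\models\KM$.

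\emph{Set-indexed class choice holds.} The argument from the previous proof goes through verbatim: boundedly many witnesses all appear in some $V[H][G_\xi]$, and tail distributivity puts the whole sequence of witnesses there.

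\emph{Parameter-free choice fails.} The failing instance is the parameter-free first-order assertion
$$\forall\xi\in\Ord\,\exists X\,\bigl(X\text{ is a branch through the }\xi\text{th tree defined from the continuum pattern}\bigr).$$
Each hypothesis instance holds in $N$, but no single class $Z$ can collect a branch through every tree $T_\xi$. Such a $Z$ would lie in some $V[H][G_\xi]$, which adds no branch through $T_\xi$.
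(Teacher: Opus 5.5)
Your proposal is correct and follows the paper's approach. The paper's proof is only the remark that one codes the $\kappa$-sequence of trees into the \GCH\ pattern below $\kappa$, as in theorem~\ref{firstOrderParameterFreeFailure}, and reruns the set-indexed construction; you supply the omitted verifications, including the ${<}\kappa$-distributivity of the full product over $V[H]$ via $\s$ remaining $\kappa$-cc in $V[G]$, where the $\kappa$-cc needs $\kappa$ to stay Mahlo in $V[G]$ by lemma~\ref{le:trees} and not merely that $\P$ adds no bounded subsets. Do make explicit that over $V[H]$ you force with the product $\P$ as computed in $V$, with coordinate-respecting automorphisms taken from $V$: since $\s$ adds new ${<}\kappa$-sequences, the bounded-support product recomputed in $V[H]$ is a different poset, and your identification $V[H][G]=V[G][H]$ via $\P\times\s\cong\s\times\P$ holds only for the $V$-product.
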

\begin{proof}
This is just a direct analogue of the argument given in the proof of theorem~\ref{firstOrderParameterFreeFailure}. Namely, we code the sequence of trees into the \GCH\ pattern below $\kappa$ in such a way that it becomes definable in $V_\kappa$ without parameters.
\end{proof}
The next theorem is motivated by an analogous result established by Guzucki in~\cite{guzicki:choiceScheme} for models of second-order arithmetic.
\begin{theorem}
\emph{($\ZFC\,+\,$there is an inaccessible cardinal)} There is a model of $\KM$ in which the parameter-free choice scheme holds, but a $\Pi_1^1$-instance of the class choice scheme fails.
\end{theorem}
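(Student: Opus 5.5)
We follow Guzicki's strategy for second-order arithmetic, lifted to $\kappa$: a Feferman--\Levy-style symmetric collapse model whose forcing is homogeneous enough that parameter-free statements cannot distinguish generic objects. Work in $L$ with $\kappa$ inaccessible, and let $\P=\Pi_{n\in\omega}\Coll(\kappa,\kappa^{(+n)})$ with finite support. Take $G\of\P$ generic and let $N\of L[G]$ be the symmetric model of theorem~\ref{th:FailureChoiceSchemePi11}, whose sets of ordinals are exactly those in some $L[G_n]$. As there, $\mathcal M=\<V_\kappa,\in,V_{\kappa+1}^N>\models\KM$ by lemma~\ref{le:ZFmodelOfKM}, and the $\Pi^1_1$ instance asserting that every $L_{\kappa^{(+n)^L}}$ is coded has no uniformizing $Z$. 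So the failure of the full scheme is already in hand, and the work is to verify the parameter-free class choice scheme.

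Suppose $\mathcal M\models\forall x\,\exists X\,\varphi(x,X)$ with $\varphi$ parameter-free. The key claim is that for each $x\in V_\kappa$ there is already a witness $X\in V_{\kappa+1}^{L[G_1]}$, and more strongly a witness in $L$ itself, chosen canonically as the $<_L$-least one. To prove this, fix $x$ and a witness $X\in V_{\kappa+1}^{L[G_n]}$. Write $L[G]=L[G_n][G^{(n)}]$. The model $N$, and hence $\mathcal M$, is definable in $L[G_n]$ from the tail forcing, and because the tail product is weakly homogeneous and $N$ is symmetric, truth in $\mathcal M$ of statements with parameters in $L[G_n]$ is decided by the trivial condition of the tail forcing.

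Next we exploit homogeneity of the initial segment. The forcing $\bar\P_n$ is homogeneous (a finite product of collapses), and $x\in L$, so the statement ``$\exists X\,\varphi(x,X)$ holds in $\mathcal M$'' is forced by $\one$ over $L$. However, we need more than this: we need a witness that is definable from $x$ alone. The plan is to use a Lévy-style absorption argument. Any condition $p\in\bar\P_n$ forces that some name $\dot X$ witnesses $\varphi(x,\dot X)$ in the symmetric model. We then verify that $\mathcal M$ can define, uniformly in $x$, the class of pairs $(p,\dot X)$ with $\dot X\in L$ a nice $\bar\P_n$-name for a subset of $V_\kappa$ and $p\Vdash\varphi(x,\dot X)$. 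This class is definable in $\mathcal M$ because the relevant forcing relation over $\mathcal M$ is expressible via the definability of $N$'s classes as a $\kappa$-sized-name structure coded by classes, and in $\mathcal M$ each $L_{\kappa^{(+n)}}$ is coded, so $\bar\P_n$ and its names are available. Having $(p,\dot X)$ chosen $<_L$-least, we must then realize $\dot X$. Since $G_n$ is not in $\mathcal M$ uniformly, we instead take $Z_x$ to be an object from which $\varphi$-witnesses can be read off: the $<_L$-least name together with a generic filter for $\bar\P_n$ over $L$ constructed inside some $L[G_m]$, $m\ge n$. Such filters exist in $\mathcal M$, since $\bar\P_n$ collapses to $\kappa$ and has only $\kappa$ many dense sets in $L$ in $N$. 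By homogeneity, any generic works.

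The main obstacle is uniformity in $x$. The index $n$ needed for $x$ may be unbounded as $x$ ranges over $V_\kappa$, and then no single class could contain generics for all $\bar\P_n$, precisely the failing instance. To rule this out, we will show that if $\varphi$ is parameter-free and every $x\in V_\kappa$ has a witness, then there is a fixed $n$ that works for all $x$. The argument is an elementarity and reflection argument: the set of $x$ needing level $n$ is definable without parameters in $L[G]$ from homogeneous forcing, so it lies in $L$. As $V_\kappa$ is the union of the countably many sets $\{x: \text{level } n \text{ suffices}\}$, each in $L$, and $L$ satisfies choice, the induced map $x\mapsto n(x)$ lies in $L$. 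Finally, $\kappa$-completeness considerations for sets of size $\kappa$ are unavailable, so we must instead argue through the Tarski truth predicate for $\mathcal M$ that $n(x)$ is bounded. This boundedness step is the one I expect to require the most care, and may need a refined choice of symmetric model.
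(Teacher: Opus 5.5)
\textbf{There is a fatal gap in the choice of model.} With the length-$\omega$ product $\Pi_{n\in\omega}\Coll(\kappa,\kappa^{(+n)})$, the parameter-free class choice scheme is \emph{false} in $\mathcal M$, so the boundedness step you flag cannot be carried out.

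The instance you cite in your first paragraph is itself parameter-free. Consider the formula ``$X$ codes $L_{\kappa^{(+n)^L}}$''. Here $\kappa$ is just $\Ord^{\mathcal M}$, and ``$\beta$ is the $n$-th successor cardinal of $\kappa$ in $L$'' can be expressed by quantifying over classes coding longer $L_{\beta'}$. So the formula mentions only $n$. Now let $\varphi(x,X)$ be ``$x\notin\omega$ or $X$ codes $L_{\kappa^{(+x)^L}}$''. This is a parameter-free instance whose hypothesis holds in $\mathcal M$ and whose conclusion fails.

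In the terms of your analysis: the map $x\mapsto n(x)$ does lie in $L$, but for this $\varphi$ it is unbounded on $\omega$. Nothing bounds a map into $\omega$, and no truth-predicate argument will change that.

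Your earlier ``key claim'' is false for the same reason. Take $\varphi(x,X)$ to be ``$X$ codes $L_{\kappa^{(+2)^L}}$''. It has witnesses in $\mathcal M$, but none in $L$ or in $L[G_1]$. Homogeneity makes the truth value of $\exists X\,\varphi(x,X)$ decided by $\one$, but it does not move witnesses down to the ground model.

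\textbf{The missing idea} is that the product must have length of cofinality greater than $|V_\kappa|=\kappa$ in $L$. The paper uses the bounded-support product $\Pi_{\xi<\kappa^+}\Coll(\kappa,\kappa^{(+\xi)})$ of length $(\kappa^+)^L$.
\begin{itemize}
\item For parameter-free $\varphi$ and $x\in V_\kappa=L_\kappa$, the coordinate-respecting automorphisms fix the relevant names. Hence for each $x$ there is a least $\xi_x$ such that $\one$ forces a witness to exist in $L[\dot G_{\xi_x}]$.
\item The map $x\mapsto\xi_x$ lies in $L$, so it is bounded by some $\xi<\kappa^+$, by regularity of $\kappa^+$ in $L$.
\item Then $V_{\kappa+1}^{L[G_\xi]}$ has size $\kappa$ in $N$ and is coded by a single class. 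This gives class collection, and hence class choice via global choice. No canonical $<_L$-least names or internally built generics are needed.
\end{itemize}

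The price is that the failing instance must now index $(\kappa^+)^L$ many levels $L_{\kappa^{(+\alpha)}}$ by sets. This requires a class parameter $<^*$, a well-order of $\kappa$ of order type $(\kappa^+)^L$. That is exactly where the bounding argument breaks down: in any $L[G_\beta]$ containing $<^*$, the length $(\kappa^+)^L$ has cofinality $\kappa$.
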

\begin{proof}
Suppose that $L$ has an inaccessible cardinal $\kappa$. Working in $L$, consider the bounded-support product $\P=\Pi_{\xi<\kappa^+}\Coll(\kappa,\kappa^{(+\xi)})$ to collapse the first $\kappa^+$-many successors of $\kappa$ to $\kappa$. Let $G\subseteq\P$ be $V$-generic and $G_\xi$ be the restriction of $G$ to $\overline \P_\xi=\Pi_{\eta<\xi}\Coll(\kappa,\kappa^{(+\eta)})$. Let $N$ be a symmetric model of $L[G]$ with the property that its sets of ordinals are precisely those added by proper initial segments of the product $\P$. Thus, all $\kappa^{(+\xi)}$ in $L$ for $\xi<\kappa^+$ have size $\kappa$ in $N$, but the $\kappa^{(+\kappa^+)}$ of $L$ is the $\kappa^+$ of $N$ because it is not collapsed by any of the proper initial segments $\overline{\P}_\xi$. As in the proof of theorem~\ref{th:FailureChoiceSchemePi11} we may see that $V_\kappa^N=V_\kappa$ and $V_{\kappa+1}^N=\Union_{\xi<\kappa^+}V_{\kappa+1}^{V[G_\xi]}$. Clearly $\kappa$ is regular in $N$ and $N$ has a well-ordering of $V_\kappa$, and so again by lemma~\ref{le:ZFmodelOfKM} we know that $\mathcal M=\<V_\kappa,\in,V_{\kappa+1}^N>\models\KM$.

Let us demonstrate that the class choice scheme fails in $\mathcal M$. Let $<^*$ be an ordering of $\kappa$ in order type $(\kappa^+)^L$ in $N$ and let $\alpha_\xi$ for $\xi<\kappa$ denote the ordinal coded by $\xi$ in $<^*$, meaning $<^*$ restricted to the $<^*$-predecessors of $\xi$. Thus, the $\Pi_1^1$-instance of the class choice scheme
$$\psi(<^*):=\forall \xi\,\exists X\, X\text{ codes }L_{\kappa^{(+\alpha_\xi)}}\rightarrow \exists Z\,\forall \xi\,X\text{ codes }L_{\kappa^{(+\alpha_\xi)}}$$
fails in $\mathcal M$.

Next, we argue that parameter-free choice scheme holds in $\mathcal M$. Suppose that $\mathcal M\models\forall x\,\exists X\,\phi(x,X)$. We can assume, by working below some condition if necessary, that $\one_{\P}$ forces this statement of the canonical name $\dot {\mathcal M}$ for $\mathcal M$. That $\mathcal M$ has such a canonical name follows from the fact that $V_{\kappa+1}^N=\Union_{\xi<\kappa^+}V_{\kappa+1}^{L[G_\xi]}$. First, we argue that for every $x$, there is $\xi_x<\kappa^+$ such that it is forced by $\one_{\P}$ that there is $A\in L[\dot G_{\xi_x}]$ such that $\dot{\mathcal M}\models \varphi(x,A)$. To see this, suppose $x\in V_\kappa$. Since by assumption there is $A\in \Union_{\xi<\kappa^+}V_{\kappa+1}^{L[G_\xi]}$ for which $\mathcal M\models\varphi(x,A)$, we can fix some $\xi_x$ such that there is $A\in L[G_{\xi_x}]$ so that $\mathcal M\models\varphi(x,A)$. And we may fix a condition $p$ forcing that there is such an $A\in L[\dot G_{\xi_x}]$ with $\dot{\mathcal M}\models\varphi(x,A)$. Now suppose towards a contradiction that there is some other condition $q$ which forces that there is no such $A$ in this particular extension $L[\dot G_{\xi_x}]$. Let $\pi$ be a coordinate-respecting automorphism, as described in section~\ref{subsec:symmetricModels} of the appendix, such that $\pi(p)$ and $q$ are compatible. The condition $\pi(p)$ therefore forces that there is $A$ in $L[\pi(\dot G_{\xi_x})]$ for which $\pi(\dot{\mathcal M})\models\varphi(x,A)$. But, because $\pi$ is coordinate respecting, it follows that $\pi(\dot G_\xi)_H$ and $(\dot G_\xi)_H$ are interdefinable from $\pi$ in any extension $L[H]$, and so $L[(\dot G_{\xi_x})_H]=L[\pi(\dot G_{\xi_x})_H]$. Now, it follows that $\pi(\dot{\mathcal M})=\dot{\mathcal M}$ in any such $L[H]$. Thus, $\pi(p)$ forces that there is $A\in L[\dot G_{\xi_x}]$ such that $\dot{\mathcal M}\models\varphi(x,A)$, but this contradicts our assumption that $q$, which is compatible to $\pi(p)$, forces the negation of this statement. This completes the argument that for every $x\in V_\kappa$, there is $\xi_x<\kappa^+$ such that $\one_{\P}$ forces that there is $A\in L[\dot G_{\xi_x}]$ such that $\dot{\mathcal M}\models\varphi(x,A)$. Since $\kappa^+$ is regular in $L$, we can choose a bound $\xi<\kappa^+$ for the $\xi_x$. Thus, $L[G_\xi]$ has witnesses $A$ for every $x\in V_\kappa$. But $V_{\kappa+1}^{L[G_\xi]}$ is has size $\kappa$ in $N$ and therefore is coded by some element of $V_{\kappa+1}^N$. This code gives us the desired collecting class of witnesses for $\varphi(x,X)$.
\end{proof}

In short, the model $\mathcal M$ satisfies the full choice scheme without parameters, but there is a $\Pi^1_1$-instance of the scheme with parameters that goes awry.

\section{Further weakness in \KM}\label{sec:weaknessesKM}

In this section, we argue that $\KM$ as a theory fails to establish some other expected and desirable results in second-order set theory, such as the \Los\ theorem for internal second-order ultrapowers, the elementarity of familiar large cardinal embeddings, the preservation of \KM\ by ultrapowers generally, and the fact that second-order logical complexity is not affected by first-order quantifiers. These various failures, meanwhile, are addressed when we augment $\KM$ with the class choice scheme, thereby forming the strictly stronger theory we denote by $\KM^+$, a theory which does prove those desirable results and which in this sense and others provides a more robust foundation than $KM$ for second-order set theory.

Consider first the case of ultrapowers and the \Los\ theorem, which play a central role role in set theory, particularly in the theory of large cardinals, where one typically undertakes ultrapowers of the entire set-theoretic universe. We would seem naturally to want our foundational theories to handle this central construction robustly. One can indeed mount the ultrapower construction of the universe in second-order set theory. Namely, if $\mathcal V=\<V,\in,\Cl>\models\KM$ and $U\in V$ is an ultrafilter on some set $D$, then we use $U$ to construct the internal second-order ultrapower $\Ult(\mathcal V,U)$ as follows. The sets of $\Ult(\mathcal V,U)$ are provided, of course, by the equivalence classes $[f]_U$ of functions $f\in V$ with domain $D$ with respect to the equivalence relation $f=_Ug$, holding whenever $\set{d\in D\mid f(d)=g(d)}\in U$. This is a congruence with respect to the ultrapower membership relation $[f]_U\in_U [g]_U$, which holds whenever $\set{d\in D\mid f(d)\in g(d)}\in U$. The second-order objects, meanwhile, the classes of $\Ult(\mathcal V,U)$, are taken as the equivalence hyperclasses $[F]_U$ of codable hyperclass functions $F:D\to \Cl$, defined as predicates so that $[f]_U\in [F]_U$ whenever $\set{d\in D\mid f(d)\in F(d)}\in U$. As mentioned earlier, we represent such functions $F$ by a class $\tilde F\of D\times V$ for which $F(d)$ is the slice $\tilde F_d$. It should be noted here that there is no analogue of Scott's trick to choose a unique representative of a hyperclass equivalence class.  Because the filter we used is an element of $V$, the entire construction we have just described can be undertaken inside $\mathcal V$ and we refer to it as the \emph{internal} ultrapower construction. 

One wants not merely to undertake the ultrapower construction, of course, but also to establish the expected desirable features of it. So let us say that the ultrapower $\Ult(\mathcal V,U)$ fulfills the \emph{\Los\ theorem scheme} for internal second-order ultrapowers when every instance of the following biconditional holds, for every assertion $\varphi$ in the language of second-order set theory:
 $$\Ult(\mathcal V,U)\satisfies\varphi([f]_U,[F]_U)\quad\text{if and only if}\quad \set{d\in D\mid \mathcal V\satisfies\varphi(f(d),F(d))}\in U.$$ 
The \Los\ scheme implies the elementarity of the ultrapower embedding, which maps sets $a\mapsto [c_a]_U$ and classes $X\mapsto[c_X]_U$ to the equivalence classes of the corresponding constant functions.
 
One can easily establish every first-order instance of the \Los\ scheme by meta-theoretic induction in $\KM$, that is, for the first-order formulas $\varphi$ only.  We shall presently prove, however, that the second-order instances of the \Los\ scheme are \emph{not} generally provable in $\KM$. Meanwhile, the full second-order scheme is provable in $\KM^+$, and indeed in light of the role played by the axiom of choice in proving the classical first-order \Los\ theorem, it will not be terribly surprising that in fact the \Los\ theorem scheme for internal second-order ultrapowers is precisely equivalent over $\KM$ to the set-indexed class choice scheme.

\begin{theorem}\label{th:LosIsEquivalentToSetSizedChoiceScheme}
The \Los\ theorem scheme for internal second-order ultrapowers is equivalent over $\KM$ to the set-indexed class choice scheme.
\end{theorem}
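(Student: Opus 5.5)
We prove the two directions separately. The forward direction is an induction on formulas in which the set-indexed class choice scheme is used exactly at the second-order existential quantifier. The reverse direction applies a single instance of the \Los\ scheme to a suitably chosen ultrafilter.

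\emph{Set-indexed class choice implies \Los.} Fix a set $D$ and an ultrafilter $U$ on $D$. We prove the biconditional
$$\Ult(\mathcal V,U)\satisfies\varphi([f]_U,[F]_U)\iff\set{d\in D\mid \mathcal V\satisfies\varphi(f(d),F(d))}\in U$$
by meta-theoretic induction on $\varphi$, uniformly in the parameters $f$ and $F$.

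\begin{itemize}
\item For the induction to make sense inside $\mathcal V$, the right-hand side must define a class of parameter tuples. Second-order comprehension in \KM\ provides this.
\item Atomic cases hold by definition. This includes set membership in classes, since $[f]_U\in[F]_U$ was defined so that it holds exactly when $\set{d\mid f(d)\in F(d)}\in U$.
\item Negation and conjunction use only the ultrafilter properties of $U$.
\item The first-order quantifier step is the usual argument. If $\set{d\mid\exists y\,\varphi(y,f(d),F(d))}\in U$, apply global choice (with the class parameter $F$) to pick $g(d)$ witnessing the quantifier on that set. Replacement makes $g$ a set, since $D$ is a set.
\item The second-order quantifier step is where the choice scheme enters. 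Suppose $\set{d\in D\mid \exists X\,\varphi(f(d),X,F(d))}=E\in U$. Apply the set-indexed class choice scheme with index set $E$ and class parameter $\tilde F$ to the formula $\varphi(f(d),X,\tilde F_d)$. This yields a class $Z$ with $\varphi(f(d),Z_d,F(d))$ for all $d\in E$. Then $[Z]_U$ is a witness in the ultrapower by the induction hypothesis.
\item The converse of that step needs nothing: a witness $[G]_U$ in the ultrapower transfers back through the induction hypothesis.
\end{itemize}

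\emph{\Los\ implies set-indexed class choice.} Suppose $\forall x\in a\,\exists X\,\varphi(x,X,A)$ holds. Let $D$ be the set of finite subsets of $a$, and let $U$ be an ultrafilter on $D$ extending the filter generated by the cones $\set{s\in D\mid x\in s}$ for $x\in a$. The ultrafilter $U$ exists in $V$ by \AC. The idea is the following.
\begin{itemize}
\item In $\Ult(\mathcal V,U)$, the element $[\mathrm{id}]_U$ is a finite set containing $[c_x]_U$ for every $x\in a$.
\item Inside $\mathcal V$, for each finite $s\subseteq a$, the statement $\exists Z\,\forall x\in s\,\varphi(x,Z_x,A)$ holds. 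This is proved by induction on the size of $s$. Second-order comprehension gives induction for second-order assertions, and we glue one new witness onto a code at each step, using first-order comprehension.
\item By the \Los\ scheme applied to the formula $\exists Z\,\forall x\in y\,\varphi(x,Z_x,A)$ with $y=[\mathrm{id}]_U$, the ultrapower has a class $[F]_U$ with $\varphi(u,([F]_U)_u,[c_A]_U)$ for all $u\in[\mathrm{id}]_U$.
\item Applying \Los\ back to a single index $x$ does not recover a choice function, because it only returns information on a $U$-large set of $d$. This is the main obstacle.
\end{itemize}

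\emph{The main obstacle, and how to get past it.} The difficulty is converting the ultrapower witness into an honest choice function on $a$. I would try the following. For $x\in a$, the set $\set{s\mid x\in s \wedge \varphi(x,F(s)_x,A)}$ lies in $U$, so it is nonempty. Using the global well-order, let $s_x$ be its least element, and set $Z_x=F(s_x)_x$. This is first-order definable from $\tilde F$, so comprehension gives a class $Z$ with $\varphi(x,Z_x,A)$ for every $x\in a$.

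A simpler route is also available. Take $U$ to be the principal ultrafilter on $D=\set{a}$, and apply \Los\ to the formula $\psi(y,A)=\exists Z\,\forall x\in y\,\varphi(x,Z_x,A)$. Since \Los\ for a principal ultrafilter is nearly trivial, this suggests the \Los\ direction may need an ultrafilter on the index set $a$ itself. So I would first verify which instance actually does the work: the real content is passing $\forall x\in a\,\exists X$ to $\exists Z\,\forall x\in a$. The cleanest choice is an ultrafilter on $D=a\times\mathcal P_{\rm fin}(a)$, or more simply a \Los\ instance where the hypothesis is a quantifier over the index. I expect the finite-support argument above, combined with choosing least witnesses via global choice, to work.
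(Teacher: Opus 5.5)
Your forward direction is the paper's argument: the usual \Los\ induction, with the set-indexed scheme used only at the class-existential step. Your converse is correct but takes a different route.

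The paper argues by induction on the cardinality $\kappa$ of the index set. It takes an ultrafilter on $\kappa$ concentrating on tails and uses the induction hypothesis to get $\exists Z\,\forall\xi<\alpha\,\psi(\xi,Z_\xi,A)$ for every $\alpha<\kappa$. It transfers this to $\forall\xi<[\mathrm{id}]_U$ in the ultrapower and pulls a witness $[F]_U$ back to $U$-almost every $\alpha$. Finally it takes for each $\xi$ the section $F(\alpha)_\xi$ at the least $\alpha$ for which it works.

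You instead use a fine ultrafilter on $[a]^{<\omega}$, so the only small instances you need are the finite ones. $\KM$ proves these outright by induction on $\omega$, which is exactly where second-order comprehension enters. Your choice of the least $s\in E$ with $x\in s$ plays the role of the paper's least good $\alpha$.

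What your version buys: it avoids the transfinite minimal-counterexample induction on cardinals altogether. It also covers finite index sets directly, which a tail-concentrating ultrafilter cannot do on its own. The paper's version keeps the ultrafilter on the cardinal $\kappa$ itself, and this ordinal-indexed least-witness selection is what it reuses later for the ultrapower by an ultrafilter on $\omega$.

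Two small corrections:
\begin{itemize}
\item The selection $x\mapsto s_x$ is not first-order definable from $\tilde F$, since $\varphi$ may contain class quantifiers. This is harmless. Either invoke full $\KM$ comprehension, or first form the set $E=\set{s\in D\mid \forall x\in s\,\varphi(x,F(s)_x,A)}$, which lies in $U$ by \Los. Then let $s_x$ be the least element of $E$ containing $x$, after which $Z$ is first-order definable from $\tilde F$ and the set function $x\mapsto s_x$.
\item Your closing paragraph should be dropped. \Los\ for a principal ultrafilter is a tautology and cannot produce the uniformizing class. No index set like $a\times\mathcal P_{\rm fin}(a)$ is needed either, since the fine-ultrafilter argument you already gave is complete.
\end{itemize}
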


\begin{proof}
The usual proof of the classical \Los\ theorem adapts easily to prove the \Los\ theorem scheme for internal second-order ultrapowers in any model of $\KM$ that satisfies the set-indexed class choice scheme. One proceeds by meta-theoretic induction on formulas, and the class choice scheme is used to assemble witnesses for the second-order existential quantifier case, just as one uses \AC\ to prove the classical first-order \Los\ theorem. 

For the converse direction, assume $\mathcal V=\<V,\in,\Cl>$ satisfies $\KM$ as well as the \Los\ theorem scheme for the internal second-order ultrapowers. We shall prove that the set-indexed class choice scheme holds as well, proving this by induction on the size of the index set. Assume by induction that all instances of this hold for index sets of size less than $\kappa$, and suppose that we have an instance on $\kappa$ itself. So we assume that for every $\xi<\kappa$ there is $X$ such that $\psi(\xi,X,A)$, where $A$ is some fixed class parameter. We seek to assemble such various witnesses into a coded class sequence. 

Let $U$ be an ultrafilter on $\kappa$ concentrating on tail sets and consider the internal ultrapower $\Ult(\mathcal V,U)$. By our minimality assumption on $\kappa$, it follows that for every $\alpha<\kappa$ we have 
 $$\mathcal V\models\exists Z\ \forall\xi{<}\alpha\
  \psi(\alpha,Z_\xi,A).$$
That is, for any particular $\alpha<\kappa$ we can unify $\alpha$ many witnesses below $\alpha$ into a coded class $Z$ whose sections $Z_\xi$ work for all $\xi<\alpha$. Since this assertion is true for every $\alpha$ less than $\kappa$ and the set of such $\alpha$ is in $U$, it follows by the \Los\ theorem scheme that 
 $$\Ult(\mathcal V,U)\satisfies \exists Z\ \forall\xi{<}[\text{id}]_U\ \psi(\xi,Z_\xi,[c_A]_U),$$ 
where $\text{id}:\kappa\to\kappa$ is the identity function and $c_A:\kappa\to\Cl$ is the constant function with value $A$. Let $Z=[F]_U$ be the class witnessing this second-order existence assertion. So $F:\kappa\to\Cl$ is coded in $\Cl$ and for $U$-almost-every $\alpha$ we have that 
 $$\mathcal V\satisfies \forall\xi{<}\alpha\ \psi(\xi,F(\alpha)_\xi,A).$$
Since $U$ concentrates on tail segments, there are unboundedly many $\alpha$ like that. That is, unboundedly often, $F(\alpha)$ provides an $\alpha$-sequence of classes $F(\alpha)_\xi$ that work for every $\xi<\alpha$. We can therefore assemble these witnesses into a single $\kappa$ sequence of suitable witnesses. Namely, let $F'(\xi)$ be $F(\alpha)(\xi)$ for the smallest $\alpha$ for which this works as a witness for $\psi(\xi,F'(\xi),A)$. The function $F'$ therefore witnesses this instance of the class choice scheme, as desired.
\end{proof}

Thus we have established that the \Los\ theorem scheme will fail in any model of $\KM$ that is not a model of the set-indexed class choice scheme. We claim next, however, that the situation is much worse than this. Namely, we claim that an internal second-order ultrapower of a $\KM$-model without the set-indexed class choice scheme is not necessarily even a model of $\KM$ itself. In short, without the set-indexed choice scheme, internal ultrapowers do not preserve $\KM$.

To begin with this argument, let us record here our earlier observation that the first-order fragment of the \Los\ theorem scheme does hold for internal ultrapowers, even allowing class parameters (but not class quantifiers). This can be proved by the usual induction on formulas. We had needed the class choice scheme only to handle the class quantifier case of the induction.

\begin{lemma}\label{le:first-orderLos}
The \Los\ theorem scheme holds in internal ultrapowers of models $\mathcal V=\<V,\in,\Cl>\satisfies\GBC$ for all first-order assertions $\varphi$ with class parameters. 
 $$\Ult(\mathcal V,U)\satisfies\varphi([f]_U,[F]_U)\quad\text{if and only if}\quad \set{d\in D\mid \mathcal V\satisfies\varphi(f(d),F(d))}\in U.$$ 
In particular, the internal ultrapower map is $\Sigma_1^1$-elementary.
\end{lemma}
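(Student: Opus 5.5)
The argument is the usual proof of the \Los\ theorem, carried out by meta-theoretic induction on the complexity of a first-order formula $\varphi$ in the language with membership and finitely many class predicates. Because only set quantifiers occur, the choice needed is set choice (or global choice) inside $\mathcal V$. No choice among classes is required. One point needs care before starting. The objects $[F]_U$ are equivalence classes of codable hyperclass functions, represented by classes $\tilde F\subseteq D\times V$. So I would first check that $[f]_U\in[F]_U$ does not depend on the representatives, which is clear because the defining set $\set{d\mid f(d)\in \tilde F_d}$ changes only on a $U$-null set. Next I would note that for each fixed first-order $\varphi$, the set $\set{d\in D\mid \mathcal V\models\varphi(f(d),\tilde F_d)}$ is a set in $V$. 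It is definable by a first-order formula using the class parameter $\tilde F$ and the set parameter $f$, so it exists by first-order comprehension together with separation for the expanded language, both of which \GBC\ provides. Hence asking whether it belongs to $U$ is meaningful.

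The induction steps are as follows. For atomic formulas $x\in y$, $x=y$ and $x\in X$ the claim is the definition of the ultrapower relations, where $x\in X$ is read through the sections $\tilde F_d$. Negation and conjunction follow because $U$ is an ultrafilter. The only substantive case is the set quantifier $\exists y\,\varphi(y,[f]_U,[F]_U)$. The right-to-left direction needs a witness function. Suppose $S=\set{d\mid \mathcal V\models\exists y\,\varphi(y,f(d),\tilde F_d)}\in U$. Using the global well-order (or, since $D$ is a set, ordinary \AC\ applied after bounding ranks), define $g(d)$ to be a witness of least rank chosen by the global choice function. To see that $g$ is a set, I would use class replacement. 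The map $d\mapsto$ (the set of rank-least witnesses) is first-order definable from $\tilde F$, so it is a class function by comprehension, and its image on $D$ is a set. A choice function on that set then yields $g\in V$. The induction hypothesis applied to $g$ gives the existential in the ultrapower. The left-to-right direction is immediate from the induction hypothesis.

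The main obstacle is this uniformity point: every definability step has to take place within \GBC, that is, by first-order comprehension with class parameters and class replacement, and never appeal to second-order comprehension. That is exactly why the lemma holds over \GBC. The $\Sigma^1_1$-elementarity then follows. A first-order $\varphi(a,X)$ transfers to $\varphi([c_a],[c_X])$ via constant functions. If $\mathcal V\models\exists X\,\varphi(a,X)$ with witness $X$, then $[c_X]_U$ witnesses the same statement in $\Ult(\mathcal V,U)$, so $\Sigma^1_1$ statements transfer upward. The converse, from the ultrapower back to $\mathcal V$, holds as well. A witness $[F]_U$ satisfies $\set{d\mid\mathcal V\models\varphi(a,\tilde F_d)}\in U$ by the first-order \Los\ theorem, and this set is nonempty, so some section $\tilde F_d$ (which is a class by comprehension) witnesses the statement in $\mathcal V$. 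This gives elementarity for $\Sigma^1_1$ and, dually, for $\Pi^1_1$ assertions.
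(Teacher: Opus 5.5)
Your proposal is correct and follows the paper's route. Like the paper, you use the usual meta-theoretic induction on first-order formulas, where the set-quantifier step gets its witness function from global choice together with first-order comprehension and class replacement. No class choice is ever needed, since no class quantifiers occur.

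Your derivation of $\Sigma^1_1$-elementarity is exactly the intended consequence: constant witnesses $[c_X]_U$ give the upward direction, and a single section $\tilde F_d$ from a $U$-large, hence nonempty, set gives the downward direction. You also rightly claim this only for the embedding, and not as a full \Los\ scheme for $\Sigma^1_1$ formulas, which would require set-indexed class choice.
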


Let us begin to expose the problems for \KM\ by proving that  internal ultrapowers of models of \KM\ do not necessarily preserve $\KM$.

\begin{theorem}\label{thm:ultrapowerOfKMModelIsNotKMMOdel}
\emph{($\ZFC\,+\,$there is an inaccessible cardinal)} There is a model of $\KM$ whose internal second-order ultrapower by an ultrafilter on $\omega$ is not a model of $\KM$.
\end{theorem}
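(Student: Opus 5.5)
We will use the model $\mathcal M=\<V_\kappa,\in,V_{\kappa+1}^N>$ of theorem~\ref{th:FailureChoiceSchemePi11}, the Feferman-\Levy-style collapse model built from $\P=\Pi_{n\in\omega}\Coll(\kappa,\kappa^{(+n)})$ over $L$. Alternatively we could use the tree model of theorem~\ref{th:firstOrderParameterFailure}. Let $U\in V_\kappa$ be a nonprincipal ultrafilter on $\omega$. In $\mathcal M$, for each $n$ there is a class coding $L_{\kappa^{(+n)^L}}$, but no single class collects such codes for all $n$. The plan is to show that $\Ult(\mathcal M,U)$ fails comprehension.

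The key object is a hyperclass function $F:\omega\to\Cl$. Each single witness exists, but no sequence of them is codable. So we take instead, for each $n$, the class $F(n)$ coding the finite sequence $\<L_{\kappa^{(+m)^L}}\mid m\le n>$; this is a finite concatenation and is available in $L[G_n]$. The function $F$ itself is still not codable, since a code for it would collect all the levels. The idea is to use the identity function $[\mathrm{id}]_U$, a nonstandard natural number $N^*$ of the ultrapower, and to derive a contradiction from the following assertion, which we will show is true in the ultrapower and which \KM\ would refute.

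\emph{Claim:} for every standard $k$, $\Ult\models\exists X\,(X\text{ codes }L_{\kappa^{(+k)}})$. Yet for $N^*$ there is no class coding $L_{\kappa^{(+N^*)}}$, since such a class would be $[F]_U$ for some codable $F$ with $F(n)$ coding $L_{\kappa^{(+n)}}$ for $U$-almost all $n$, which $\mathcal M$ cannot have.

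More precisely, we consider the set $I=\set{n\in\omega^{\Ult}\mid \exists X\ X\text{ codes }L_{\kappa^{(+n)^L}}}$ in the ultrapower. By first-order \Los\ (lemma~\ref{le:first-orderLos}) and the fact that constant functions work, $I$ contains all standard $n$, is closed under successor, and contains $0$. Closure under successor holds because from any codable $F$ coding level $n$ on a $U$-large set, one can uniformly define level $n+1$ in $L$ from level $n$ by first-order means. But $N^*\notin I$. If \KM\ held in the ultrapower, then second-order comprehension would make $I$ a class, and induction (available from comprehension, as used in the truth-predicate lemma) would give $I=\omega^{\Ult}$, a contradiction.

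The main obstacle is verifying rigorously that $N^*\notin I$. This requires checking that every class of the ultrapower is $[F]_U$ for $F$ coded in $V^N_{\kappa+1}$, hence in some $L[G_m]$. It also requires that $\kappa^{(+n)}$ for $U$-almost all $n$ cannot be collapsed by $G_m$, which holds because almost all $n$ exceed $m$. A further point is that the codes must be interpreted through $\in_U$-pointwise evaluation, which uses first-order \Los\ with class parameters. Finally, the successor step must be made uniform, for instance by choosing the code for level $n+1$ canonically from that for level $n$ via the $L$-order, so that the resulting function is codable.
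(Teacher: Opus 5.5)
Your overall plan matches the paper's. You show that in $\mathcal W=\Ult(\mathcal M,U)$ the collection $I$ of $n\in\omega^{\mathcal W}$ admitting a witness class is exactly the standard $\omega$. \KM\ comprehension would then make the standard cut a class, contradicting induction. But two steps fail as written.

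First, the successor step is false, not merely unproven. A class $X$ coding $L_{\kappa^{(+n)^L}}$ can already lie in $L[G_{n+1}]$. There $\kappa^{(+n+1)^L}$ is still a cardinal, because the product $\bar\P_{n+1}$ has size $\kappa^{(+n)}$ in $L$. So nothing first-order definable from $X$, and nothing in $L[X]$, codes $L_{\kappa^{(+n+1)^L}}$. Choosing ``canonically via the $L$-order'' cannot help either, since these codes are not in $L$. Indeed, if level $n+1$ were uniformly definable from level $n$, iterating this $\omega$ times by recursion inside $\mathcal M$ would produce precisely the collecting class you know does not exist. Fortunately the step is unnecessary. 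Your argument against $N^*$ works for every nonstandard $[f]_U$, since $\set{n\mid f(n)\geq m}\in U$ for each $m$. Once no nonstandard element lies in $I$, we get $I=\omega$, which contains $0$, is trivially closed under successor, and is not all of $\omega^{\mathcal W}$. This is exactly how the paper concludes.

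Second, your argument that $N^*\notin I$ invokes a direction of \Los\ that is not available. Write $\psi(n,X)$ for ``$X$ codes $L_{\kappa^{(+n)^L}}$''. This formula is $\Pi^1_1$, not first-order, because pinning down the height as the $n$th $L$-cardinal above $\kappa$ requires quantifying over codes of taller $L_\gamma$. Lemma~\ref{le:first-orderLos}, applied to each class $[G]_U$ of $\mathcal W$, gives forward transfer of $\Pi^1_1$ facts, which is all your Claim needs. It does not give backward transfer from $\mathcal W\models\psi(N^*,[F]_U)$ to $\psi(n,F(n))$ for $U$-almost all $n$. Refuting the universal class quantifier on a $U$-large set would require assembling $\omega$ many counterexample classes into one codable $G$. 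That is exactly the set-indexed class choice that fails in $\mathcal M$ (cf.\ theorem~\ref{th:LosIsEquivalentToSetSizedChoiceScheme}).

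This can be repaired, but the argument must be supplied. Transfer back only the first-order part of $\psi$. Instantiate the class quantifier only at classes obtained without choice from $F$ and one fixed code $Y$ of $L_{\kappa^{(+m)^L}}$, where $F\in L[G_m]$: the constant function with value $Y$, plus any comparison maps uniquely definable from $F(n)$ and $Y$. This forces $F(n)$ to code $L_{\kappa^{(+f(n))^L}}$ for $U$-almost all $n$, hence for some $n$ with $f(n)\geq m$, which is impossible in $L[G_m]$.

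The paper sidesteps the issue by using the tree model. There ``$X$ codes an $n$-sequence of branches through the first $n$ trees of $\vec T$'' is first-order, so lemma~\ref{le:first-orderLos} applies in both directions. Any codable $F$ satisfying this at $f(n)$ on a $U$-large set then directly yields a branch through every tree.
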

\begin{proof}
Let $\mathcal M=\<V_\kappa,\in,V_{\kappa+1}^N>$ be the model of $\KM$ constructed in the proof of theorem~\ref{th:firstOrderParameterFailure}, which has a class sequence $\vec T$ of $\omega$ many $\kappa$-trees, each tree a class in $\mathcal M$ and each tree having a branch in $\mathcal{M}$, but there is no class collecting a branch for every tree. Let $\varphi(n,X,\vec T)$ be a first-order formula expressing that $X$ codes an $n$-sequence of branches for the first $n$-many of the trees in $\vec T$. Fix a nonprincipal ultrafilter $U$ on $\omega$ in $V_\kappa$ and consider $\Ult(\mathcal M,U)$. If $n<\omega$, then $\mathcal M$ satisfies the $\Sigma_1^1$-formula $\varphi(n,X,\vec T)$, and hence by lemma~\ref{le:first-orderLos}, $\Ult(\mathcal M,U)$ satisfies $\exists X\,\varphi([c_n]_U,X,[C_{\vec T}]_U)$. Thus, $\Ult(\mathcal M,U)$ satisfies $\exists X\,\varphi([c_n]_U,X,[C_{\vec T}]_U)$ for every actual natural number $n$. If $\Ult(\mathcal M,U)$ is a model of $\KM$, then it satisfies induction for all second-order assertions about the natural numbers, and therefore it must have some nonstandard natural number $[f]_U$ such that $\exists X\,\varphi([f]_U,X,[C_{\vec T}]_U)$ holds, for otherwise it would be able to define the standard cut $\omega$. Assuming this to be the case, it follows that $$\Ult(\mathcal M,U)\models \varphi([f]_U,[F]_U,[C_{\vec T}]_U)$$
for some class $[F]_U$ and so, by lemma~\ref{le:first-orderLos} since $\varphi$ is first-order, the set
$$\set{n<\omega\mid \varphi(n,F(n),\vec T)}$$
is an element of $U$ and therefore cofinal in $\omega$. But this is impossible, because using $F$, we can as in theorem \ref{th:LosIsEquivalentToSetSizedChoiceScheme} collect a branch for every tree in $\vec T$.
\end{proof}

Next, let us show that large cardinal embeddings arising as the internal ultrapower of a model of \KM\ are not necessarily elementary in the language of \KM\ set theory.

\begin{theorem}
\emph{($\ZFC\,+\,$measurable cardinal with an inaccessible above)} There is a model of \KM\ with a measurable cardinal $\delta$, such that the internal ultrapower of the universe by a normal measure on $\delta$ is not elementary in the second-order language.
\end{theorem}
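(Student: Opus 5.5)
Since the \Los\ theorem scheme holding for the ultrapower by a measure $\mu$ on $\delta$ means exactly that the ultrapower map is elementary, we need a \KM\ model with a measurable $\delta$ in which a second-order assertion of the form $\exists Z\,\forall\xi{<}\delta\,\psi(\xi,Z_\xi,A)$ fails. By the converse direction of theorem~\ref{th:LosIsEquivalentToSetSizedChoiceScheme}, the natural instance is one indexed by $\delta$ itself. So I plan to build the model so that each $\xi<\delta$ has a class witness, but no class collects $\delta$ many of them. Then I will show that elementarity fails for the $\Sigma^1_1$ assertion $\exists Z\,\forall\xi{<}\alpha\,\psi(\xi,Z_\xi,A)$ evaluated at $\alpha=\delta$.

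For the construction, let $\delta<\kappa$ with $\delta$ measurable and $\kappa$ inaccessible. Adapting lemma~\ref{le:trees} as in the proof of the set-indexed separation theorem, I will arrange a $\delta$-sequence $\vec T=\<T_\xi\mid\xi<\delta>$ of homogeneous $\kappa$-Suslin trees such that every product $\Pi_{\eta<\xi}T_\eta$ with $\xi\le\delta$ has the $\kappa$-cc and is ${<}\kappa$-distributive, while preserving the measurability of $\delta$. The measurability is harmless here, since the tree-adding forcing is ${<}\kappa$-distributive and $\kappa>\delta$, so $V_\kappa$ and hence the measures on $\delta$ are unchanged. I then force with the full-support product $\P=\Pi_{\xi<\delta}T_\xi$ and take the symmetric model $N$ whose sets of ordinals are exactly those added by the proper initial segments $\P_\xi$ for $\xi<\delta$. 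As in the earlier arguments, $V_\kappa^N=V_\kappa$, $\kappa$ is inaccessible in $N$, and $V_{\kappa+1}^N=\Union_{\xi<\delta}V_{\kappa+1}^{V[G_\xi]}$. So $\mathcal M=\<V_\kappa,\in,V_{\kappa+1}^N>\models\KM$ by lemma~\ref{le:ZFmodelOfKM}, and $\delta$ is measurable in $V_\kappa$ with a normal measure $\mu\in V_\kappa$.

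Now let $\psi(\xi,X,\vec T)$ say that $X$ is a branch through $T_\xi$. For each $\alpha<\delta$, some $V[G_{\alpha+1}]$ contains branches through all $T_\xi$ with $\xi<\alpha$, and these are coded by a single class. Hence $\mathcal M\models\exists Z\,\forall\xi{<}\alpha\,\psi(\xi,Z_\xi,\vec T)$ for every $\alpha<\delta$. On the other hand, no class $Z$ in $\mathcal M$ collects a branch through every $T_\xi$ with $\xi<\delta$. Such a $Z$ would lie in some $V[G_\xi]$ with $\xi<\delta$, and that model adds no branch to $T_\xi$ by the chain condition of the longer product. So $\mathcal M\models\neg\exists Z\,\forall\xi{<}\delta\,\psi(\xi,Z_\xi,\vec T)$.

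If the ultrapower map $j$ were elementary, it would give $\Ult(\mathcal M,\mu)\models\neg\exists Z\,\forall\xi{<}j(\delta)\,\psi(\xi,Z_\xi,j(\vec T))$. But $[\mathrm{id}]_\mu=\delta<j(\delta)$. By the \Los\ argument from the proof of theorem~\ref{th:LosIsEquivalentToSetSizedChoiceScheme}, the failure of the set-indexed scheme here contradicts the \Los\ scheme for $\mu$. More concretely, full elementarity together with normality would transfer the truths $\exists Z\,\forall\xi{<}\alpha\,\psi$ for $\alpha<\delta$ to $\alpha=\delta$ in the ultrapower. Pulling such a witness $[F]_\mu$ back by lemma~\ref{le:first-orderLos}, since $\psi$ is first order, yields $\mu$-almost every $\alpha$ with $F(\alpha)$ providing branches for all $\xi<\alpha$. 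Splicing these as in that proof gives a class collecting branches through all $T_\xi$, which is a contradiction.

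The main obstacle is the forcing preliminaries. Lemma~\ref{le:trees} must be adapted to $\delta$-length sequences with the $\kappa$-cc and distributivity of the products. We also need the symmetric-model machinery to apply to a full-support product of length $\delta$, which is measurable and hence of uncountable cofinality, while confirming that $\mu$ survives in $V_\kappa$. I would follow the bounded-support construction used for the $\kappa$-length sequence, now with length $\delta<\kappa$.
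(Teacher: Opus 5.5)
Your proposal is correct and follows essentially the paper's route: the same $\delta$-sequence of Suslin trees, the same symmetric model $\mathcal M=\langle V_\kappa,\in,V_{\kappa+1}^N\rangle$, and the same failure of elementarity. You transfer $\forall\alpha{<}\delta\,\exists Z\,\forall\xi{<}\alpha\,\psi$, evaluate it at $[\mathrm{id}]_\mu=\delta<j(\delta)$, pull the witness back with the first-order \Los\ lemma, and splice, as the paper does. The paper also computes the ultrapower concretely by lifting $j$ to $V[G]$, to show it is itself a \KM\ model; that is not needed here, and your opening remark about transferring the negated statement is superfluous, since the contradiction comes from the positive statement.
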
\label{th:Measurable-ultrapowers-not-elementary}

\begin{proof}
We start in a model with a measurable cardinal $\delta$ and an inaccessible cardinal $\kappa$ above it. Using the argument of section~\ref{sec:trees} of the appendix, we force over this model to obtain a universe $V$ in which $\delta$ remains measurable, $\kappa$ remains inaccessible and there are now $\delta$-many homogeneous $\kappa$-Suslin trees $\vec T=\<T_\xi\mid\xi<\delta>$ with the properties expressed in lemma~\ref{le:trees}. The forcing to add the trees preserves the measurability of $\delta$ because it adds no new subsets to $\delta$ and indeed it is $\ltkappa$-distributive. Consider the full-support product $\P=\Pi_{\xi<\delta}T_\xi$. Let $G\subseteq \P$ be $V$-generic and let $G_\eta$ be the restriction of $G$ to the product $\overline {\P}_\eta=\Pi_{\xi<\eta}T_\xi$, that is, forcing with the trees up to $\eta$. So, as usual, we can construct a symmetric submodel $N$ of $V[G]$ with the property that its sets of ordinals are precisely those added by some such fragment $\overline{\P}_\eta$ for some $\eta<\kappa$. This gives us a $\KM$-model $\mathcal M=\<V_\kappa,\in,V_{\kappa+1}^N>$, where 
$$V_{\kappa+1}^N=\Union_{\eta<\delta}V_{\kappa+1}^{V[G_\eta]}.$$ 
In $\mathcal M$, we can collect branches for every proper initial segment of $\vec T$, but not for all of them. Fix a normal measure $U$ on $\delta$ in $V_\kappa$ and consider the internal ultrapower $\mathcal W=\Ult(\mathcal M,U)$ by this measure. Since the measure is countably complete, this model is well-founded, and by composing with the Mostowski collapse, we may assume it is a transitive model. 

Let us use the ultrapower of $V[G]$ by $U$ to describe more precisely the nature of $\mathcal W$. Namely, let $j:V\to M$ be the ultrapower by $U$ in $V$. This embedding has critical point $\delta$, but $\kappa$ is an inaccessible cardinal above, so we have $\delta<j(\delta)<\kappa=j(\kappa)$. Since $\P$ is $\leqdelta$-distributive, $j$ lifts uniquely to $j:V[G]\to M[H]$, where $j(G)=H=\set{q\in j(\P)\mid \exists p\in G\ q\leq j(p)}$ and the lift is again the ultrapower by $U$ in $V[G]$. Since the forcing adds no new ${<}\kappa$-sequences, it follows that  $V_\kappa=V_\kappa^{V[G]}$, and consequently $j(V_\kappa)=V_\kappa^M$, and since $V_\kappa$ is closed under functions on $\delta$, the ultrapower of $V_\kappa$ by $U$ is isomorphic to $j(V_\kappa)=V_\kappa^M$. Thus, the collection of sets of the ultrapower $\mathcal W$ is precisely $V_\kappa^M$. Next, we argue that the collection of classes of the ultrapower $\mathcal W$ is the set $\Union_{\eta<\delta}V_{\kappa+1}^{M[H_\eta]}$. Each class in the ultrapower is represented by a function $F:\delta\to V_{\kappa+1}[G_\eta]$ for some $\eta<\delta$, which means that $[F]_U$, when viewed as an element of $M[H]$, is in $V^M_{\kappa+1}[H_\eta]$. Conversely, an element of $M[H]$ that is in $\Union_{\eta<\delta}V_{\kappa+1}^{M[H_\eta]}$ is represented by a function $F:\delta\to V_{\kappa+1}^{V[G_\eta]}$ for some $\eta<\delta$, and $F$ has to be in $V[G_\eta]$ because $\P$ is ${<}\kappa$-distributive. Thus, there is an obvious isomorphism between the classes of $\mathcal W$ and $\Union_{\eta<\delta}V_{\kappa+1}^{M[H_\eta]}$ given by mapping the equivalence class of a function $F$ in $\mathcal W$ to the corresponding equivalence class in $M$, with the ultrapower embedding being given by $j$. We can consider the symmetric model $\bar N$ of $M[H]$, obtained analogously to the symmetric model $N$ above, whose sets of ordinals are precisely those in $M[H_\eta]$ for some $\eta<\delta$, and observe that $\mathcal W=\langle V_\kappa^M,\in, V_{\kappa+1}^{\bar N}\rangle$. Thus, in particular, $\mathcal W$ is a model of $\KM$. 

In $\mathcal M$, for each $\eta<\delta$ we have a class that codes a list of branches for the first $\eta$ many trees $T_\xi$ for $\xi<\eta$. This is a statement about $\vec T$ and $\delta$ in $\mathcal M$. But if we apply the ultrapower embedding, the corresponding statement is not true about $j(\vec T)$ and $j(\delta)$, since in $\mathcal W$ there can be no such list for the first $\delta$ many trees in $j(\vec T)$, since any such list would be represented by a class function $F$, and we could then use $F$ to collect a branch for every $T_\xi$ with $\xi<\delta$. This violates the elementarity of the ultrapower embedding in the second-order language.
\begin{comment}
Since the forcing adds no new ${<}\kappa$-sequences, it follows that  $V_\kappa=V_\kappa^{V[G]}$, and consequently $j(V_\kappa)=V_\kappa^M=V_{\kappa}^{M[j(G)]}$, and these are precisely the sets of the ultrapower $\mathcal W$. The classes of the ultrapower $\Ult(\mathcal M,U)$ are represented by functions $F:\delta\to V_{\kappa+1}^N$ that are coded in $V_{\kappa+1}^N$ and these are therefore in  $V[G_\eta]$ for some $\eta<\kappa$. It follows that the classes of $\mathcal W$ are precisely the subsets of $V_\kappa$ available in some $M[H_\eta]$, for some $\eta<\delta$. 

In $\mathcal M$, for each $\eta<\delta$ we have a class that codes a list of branches for the first $\eta$ many trees $T_\xi$ for $\xi<\eta$. This is a statement about $\vec T$ and $\delta$ in $\mathcal M$. But if we apply the ultrapower embedding, the corresponding statement is not true about $j(\vec T)$ and $j(\delta)$, since in $\mathcal W$ there can be no such list for the first $\delta$ many trees in $j(\vec T)$, since any such list would have to be represented by a function $[F]_U$ that had branches through all the trees $T_\xi$ for $\xi<\delta$, whereas there is no such function available in any $M[H_\eta]$. This violates the elementarity of the ultrapower embedding. 
\end{comment}
\end{proof}
In theorem~\ref{thm:ultrapowerOfKMModelIsNotKMMOdel}, we able to produce a model of $\KM$ having an ultrafilter on $\omega$ whose ultrapower was not itself a $\KM$-model. Although, we constructed a model of $\KM$ with a normal measure whose ultrapower map is not elementary, the ultrapower itself did satisfy $\KM$. This leads to the following natural question to which we suspect that the answer is negative.

\begin{question}
Is the internal ultrapower of a model of $\KM$ by a normal measure on a cardinal $\delta$ in the model always a model of $\KM$?
\end{question}

Notice that the violation of elementarity occurred with an assertion of the form $\forall\eta{<}\delta\,\exists X\, \psi(\eta,X,\vec T)$, where $\psi$ asserts that $X$ codes an $\eta$-sequence of branches through the first $\eta$ many trees $T_\xi$ of $\vec T$. This statement has complexity $\forall\eta{<}\delta\,\Sigma^1_1$. So this brings us immediately to another failure of $\KM$, namely, that it does not prove that the logical complexity of $\Sigma^1_1$ is closed under bounded first-order quantification. 

Let us recall that the first-order set theory $\ZFC$ proves that the logical complexity of a first-order assertion is not affected by subsequent bounded quantifiers applied in front. Such a fact underlies the normal form theorem for assertions in set theory, by which one has all the unbounded quantifiers up front and applying a bounded quantifier does not change the complexity class. It seems desirable that our foundation of second-order set theory would prove analogously such a normal form, where all the second-order quantifiers in a formula appear in the front, and this complexity is not affected by additional subsequent first-order quantifiers. 

This follows easily from $\KM^+$, but is not the case for $\KM$. Let us say that a formula in second-order set theory has complexity $\Sigma^1_1$ if it has the form $\exists X\ \varphi(x,X,A)$, where $\varphi$ has only first order quantifiers. By simple coding we can equivalently allow multiple existential class quantifiers $\exists X\exists Y\ \varphi(x,X,Y,A)$.

\begin{theorem}
\emph{($\ZFC\,+\,$measurable cardinal with an inaccessible above)} The theory $\KM$ fails to establish that $\forall \eta{<}\delta\ \psi(x)$ is provably equivalent to some $\Sigma^1_1$ formula whenever $\psi$ is.
\end{theorem}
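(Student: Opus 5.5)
We will use the model $\mathcal M=\<V_\kappa,\in,V_{\kappa+1}^N>$ and its internal ultrapower $\mathcal W=\Ult(\mathcal M,U)$ by a normal measure $U$ on $\delta$, both from the proof of theorem~\ref{th:Measurable-ultrapowers-not-elementary}. Let $\psi(\eta,\vec T)$ be the $\Sigma^1_1$ assertion $\exists X\,\theta(\eta,X,\vec T)$, where $\theta$ is first-order and says that $X$ codes an $\eta$-sequence of branches through the trees $T_\xi$ for $\xi<\eta$. We will show that no single $\Sigma^1_1$ formula $\sigma(\delta,\vec T)$ is provably equivalent in $\KM$ to $\forall\eta{<}\delta\,\psi(\eta,\vec T)$. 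To make this concrete, suppose toward contradiction that $\KM\vdash\forall\delta\,\forall Y\,\bigl[\forall\eta{<}\delta\,\exists X\,\theta(\eta,X,Y)\leftrightarrow\sigma(\delta,Y)\bigr]$ for some $\Sigma^1_1$ formula $\sigma$. It is enough to treat $\delta$ and $Y$ as free variables, which lets us substitute the specific $\delta$ and $\vec T$ below.

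The first step is to observe that $\mathcal M\models\forall\eta{<}\delta\,\psi(\eta,\vec T)$. For each $\eta<\delta$, the product $\overline{\P}_\eta$ adds branches through all $T_\xi$ with $\xi<\eta$, and $V[G_\eta]$ codes them as a single class. Since $\mathcal M\models\KM$, the assumed equivalence gives $\mathcal M\models\sigma(\delta,\vec T)$.

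Next we apply lemma~\ref{le:first-orderLos}: internal ultrapower maps are $\Sigma^1_1$-elementary. Write $\sigma$ as $\exists Z\,\rho(Z,\delta,\vec T)$ with $\rho$ first-order, and fix a witness $Z$. The first-order \Los\ theorem applied to the constant functions then gives $\mathcal W\models\rho(j(Z),j(\delta),j(\vec T))$, so $\mathcal W\models\sigma(j(\delta),j(\vec T))$. The earlier proof showed that $\mathcal W=\<V_\kappa^M,\in,V_{\kappa+1}^{\bar N}>$ is itself a model of $\KM$. Applying the provable equivalence inside $\mathcal W$, we get $\mathcal W\models\forall\eta{<}j(\delta)\,\psi(\eta,j(\vec T))$. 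Taking $\eta=\delta<j(\delta)$, this yields a class in $\mathcal W$ coding branches through $j(\vec T)_\xi=T_\xi$ for all $\xi<\delta$.

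The final step, which we expect to be the main point needing care, is to rule out such a class, exactly as at the end of the proof of theorem~\ref{th:Measurable-ultrapowers-not-elementary}. The classes of $\mathcal W$ lie in $\Union_{\eta<\delta}V_{\kappa+1}^{M[H_\eta]}$. There are two ways to conclude. On the representing-function side, the class is $[F]_U$ with $F\in V[G_\eta]$ for some $\eta<\delta$, and from $F$ one can read off branches for every $T_\xi$ with $\xi<\delta$ inside $V[G_\eta]$. On the other side, $M[H_\eta]$ adds branches only to trees of $j(\vec T)$ indexed below $\eta$, because the later trees remain Suslin in $M$ and products of initial segments have the $\kappa$-cc there by elementarity. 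In particular $T_\eta=j(\vec T)_\eta$ gets no branch in $M[H_\eta]$. Either route contradicts the existence of the class, which refutes the assumed equivalence. Along the way we must check two things: that $j(\vec T)\restrict\delta=\vec T$, which holds since the critical point is $\delta$, and that this instance of the equivalence is genuinely applied in both $\KM$-models $\mathcal M$ and $\mathcal W$.
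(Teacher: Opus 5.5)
Your proof is correct and is the paper's argument made explicit: $\Sigma^1_1$ assertions pass upward along the internal ultrapower by lemma~\ref{le:first-orderLos}, and since $\mathcal W\models\KM$ (which the paper uses only implicitly), the supposed $\KM$-provable equivalence would make $\forall\eta{<}j(\delta)\,\psi(\eta,j(\vec T))$ true in $\mathcal W$, contradicting what the proof of the preceding theorem shows. One slip, on which nothing depends: $j(\vec T)\restrict\delta=\vec T$ does not follow from $\delta$ being the critical point, because each $T_\xi$ is a subset of $V_\kappa$ rather than of $V_\delta$, so $j(T_\xi)$ is only guaranteed to agree with $T_\xi$ below level $\delta$; your second closing argument should therefore be run with $j(T_\eta)$ in place of $T_\eta$, and your first closing argument (the paper's) goes through \Los\ at $[\mathrm{id}]_U=\delta$ and never needs this identification.
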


\begin{proof}
% I modified this proof in light of the previous theorem, since it is now a simple consequence. JDH
This follows immediately from the proof of theorem \ref{th:Measurable-ultrapowers-not-elementary}, which provided a \KM\ model with a measurable cardinal $\delta$, for which the ultrapower by a normal measure was not elementary for an assertion of the form $\forall\eta{<}\delta\ \psi$, where $\psi$ was $\Sigma^1_1$. But by lemma \ref{le:first-orderLos}, the ultrapower map is $\Sigma^1_1$-elementary, so that formula cannot be provably equivalent to any $\Sigma^1_1$-assertion. 
\end{proof}

\section{Other class choice principles}

Other interesting choice principles for classes arise by analogy with the principles of dependent choice in first-order set theory. Let's first consider the principle which allows us to make $\omega$ many dependent choices for a definable relation on classes without terminal nodes.

\begin{definition}\label{def:omegaDependentChoice}
The \emph{$\omega$-dependent class choice} scheme is the following scheme of assertions, where $\varphi(X,Y,W)$ is any formula in the language of second-order set theory and $A$ is a class parameter
$$\forall X\,\exists Y\,\varphi(X,Y,A)\rightarrow \exists Z\,\forall n{<}\omega\,\varphi(Z_n,Z_{n+1},A).$$
\end{definition}
Over $\KM$, the class choice scheme together with the $\omega$-dependent class choice scheme is equivalent to the following second-order reflection principle.

\begin{definition}
A model of a sufficiently strong second-order set theory $\<V,\in,\Cl>$ satisfies \emph{second-order reflection} if for every second-order assertion $\varphi(X)$, there is a (nonempty) codable hyperclass $\mathcal A$ such that for all $X\in \mathcal A$, we have $\<V,\in,\Cl>\models\varphi(X)$ if and only if $\<V,\in,\mathcal A>\models\varphi(X)$.
\end{definition}

That is, the assertion $\varphi(X)$ is absolute from the full collection of classes $\Cl$ to the coded fragment $\mathcal A$. (Note: there are other forms of second-order reflection in the literature that are not the same as this formulation.)

\begin{theorem}
Over $\KM$, the class choice scheme together with the $\omega$-dependent class choice scheme is equivalent to second-order reflection.
\end{theorem}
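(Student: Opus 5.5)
For the forward direction, I will build the reflecting hyperclass as an $\omega$-union, Löwenheim–Skolem style. Fix $\varphi(X)$ and let $\varphi_1,\dots,\varphi_k$ list its subformulas, with class and set variables displayed.

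Given a class $W$ coding a hyperclass $\mathcal W=\set{W_y\mid y\in V}$, call a class $W'$ a \emph{Skolem extension} of $W$ if two conditions hold. First, every section of $W$ is a section of $W'$. Second, for each existential subformula $\exists Y\,\varphi_i(Y,\vec x,\vec X)$, each choice of set parameters $\vec x$, and each tuple $\vec X$ of sections of $W$: if $\exists Y\,\varphi_i$ holds in $\<V,\in,\Cl>$, then a witness appears as a section of $W'$. By comprehension, the hypothesis can be expressed as a single formula $\forall u\,\exists Y\,\theta(u,Y,W)$, where $u$ ranges over the set data (an index $i$, the parameters $\vec x$, and the indices $\vec y$ of the classes $W_{y_j}$), and $\theta$ says ``if the $\exists Y\,\varphi_i$ instance is true, then $Y$ is a witness.'' The class choice scheme produces $Z$ with $\theta(u,Z_u,W)$ for all $u$. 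Interleaving the sections of $W$ and $Z$ then gives a Skolem extension. So we have shown $\forall W\,\exists W'\,\sigma(W,W')$, where $\sigma$ says ``$W'$ is a Skolem extension of $W$.''

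Next I apply the $\omega$-dependent class choice scheme to $\sigma$, starting from a $W_0$ whose sections include $X$. Strictly, the scheme as stated does not fix $Z_0$. I will handle this by putting the parameter $A$ into $\sigma$, requiring that every $W'$ contain $A$ as a section; any $Z_1$ then contains $A$. The scheme gives $Z$ with $\sigma(Z_n,Z_{n+1})$ for all $n$. Let $\mathcal A$ be the union over $n$ of the hyperclasses coded by the $Z_n$, which is codable from $Z$. Now a Tarski–Vaught argument, by meta-theoretic induction on the subformulas $\varphi_i$, shows that these formulas are absolute between $\Cl$ and $\mathcal A$ for parameters in $\mathcal A$. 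Any finitely many parameters lie in some $Z_n$ because the chain is increasing, and the existential witnesses lie in $Z_{n+1}$. First-order quantifiers need no care, since both structures share $V$. One subtlety: reflection as defined demands absoluteness for all $X\in\mathcal A$, not just for the given $X$. The construction handles this automatically, since Tarski–Vaught gives absoluteness of $\varphi$ at every member of $\mathcal A$. So I need not fix $X$; I can start from any class, for instance $\emptyset$.

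For the converse, assume second-order reflection. For class choice, suppose $\forall x\,\exists X\,\varphi(x,X,A)$. Reflect the formula $\forall x\,\exists X\,\varphi(x,X,A)$ together with $\varphi$ itself; this can be packaged as one formula in a single class variable coding $A$. The result is a codable $\mathcal A$, given by a class $C$ with sections $C_y$, containing $A$, such that $\forall x\,\exists X{\in}\mathcal A\,\varphi^{\mathcal A}(x,X,A)$ and $\varphi^{\mathcal A}$ agrees with $\varphi$ on $\mathcal A$. The issue of whether $A\in\mathcal A$ is handled by including $A$ in the reflected formula's parameters, since reflection quantifies over $X\in\mathcal A$. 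Now use global choice to pick, for each $x$, the least $y$ in the global well-order with $\varphi(x,C_y,A)$. Comprehension then defines $Z$ with $Z_x=C_{y_x}$. For $\omega$-dependent choice, reflect $\forall X\,\exists Y\,\varphi$ and $\varphi$ the same way to get $C$. Then recursively define $y_0$, and $y_{n+1}$ as the least index $y$ with $\varphi(C_{y_n},C_y,A)$. This recursion over $\omega$ is legitimate in $\KM$ because the relation ``$\varphi(C_{y},C_{y'},A)$'' is a class by comprehension and the recursion is set-sized. Finally set $Z_n=C_{y_n}$.

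I expect the main obstacle to be the bookkeeping in the forward direction. One part is formalizing the Skolem-extension property as a single second-order formula $\sigma$, which needs care over class parameters ranging over all sections of $W$. The other is checking that the Tarski–Vaught induction is carried out in the meta-theory for the finitely many subformulas of the fixed $\varphi$, which avoids any need for a satisfaction class for second-order truth.
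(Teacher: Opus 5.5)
Your proposal is correct and follows essentially the same route as the paper. Class choice gives a one-step Skolem-closure relation $\forall W\,\exists W'\,\sigma(W,W')$, $\omega$-dependent class choice iterates it, and the union of the $Z_n$ reflects by a Tarski--Vaught induction; conversely, you select least-index witnesses from the reflecting codable hyperclass, using absoluteness of $\varphi$ there, and collect them by comprehension. Your explicit requirement that every section of $W$ remain a section of $W'$ is a worthwhile refinement: it is what puts finitely many parameters drawn from different $Z_n$ into a single $Z_n$, a point the paper's relation $\psi$ leaves implicit.
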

\begin{proof}
First, suppose that the class choice scheme together with the $\omega$-dependent class choice scheme hold in a model $\mathcal V=\<V,\in,\Cl>\models\KM$. Fix a second-order formula $\varphi(X)$. Interpreting $\mathcal V$ in first-order logic and appealing to the Tarski-Vaught test, we note that it suffices to find a codable hyperclass $\mathcal A$ with the property that whenever $\exists Y\theta(\vec X,Y)$ is a subformula of $\varphi(X)$ and $\vec A$ are some finitely many classes in $\mathcal A$ such that $\exists Y\,\theta(\vec A,Y)$ holds in $\mathcal V$, then some such $Y$ is in $\mathcal A$. Let $\psi(X,Y)$ be the relation expressing that the collection of classes coded by $Y$ is closed under existential witnesses to subformulas of $\varphi(X)$ with parameters coming from the collection of classes coded by $X$ in the manner described above. It readily follows from the class choice scheme that for every class $X$, there is a class $Y$ such that $\psi(X,Y)$ holds in $\mathcal V$. Thus, by the $\omega$-dependent class choice scheme, there is an $\omega$-sequence $Z=\<Z_n\mid n<\omega>$ such that each $Z_{n+1}$, when viewed as the collection of classes it codes, is closed under existential witnesses for subformulas of $\varphi(X)$, with parameters coming from the classes coded by $Z_n$. But then the codable hyperclass consisting of the union of all the $Z_n$ has the desired property.

Conversely, suppose now that $\mathcal V=\<V,\in,\Cl>$ is a model of second-order reflection. First, we argue that $\mathcal V$ satisfies the class choice scheme. Suppose that $\mathcal V\models\forall x\,\exists X\,\varphi(x,X,B)$. Let $\mathcal A$ be a codable hyperclass such that $\<V,\in,\mathcal A>$ satisfies $\forall x\,\exists X\,\varphi(x,X,B)$ and let $A$ be such that $\mathcal A=\set{A_\xi\mid\xi\in\ORD}$ is the collection of the slices of $A$, which we may assume are indexed by ordinals. Now to every set $x$, we can associate that unique $A_\xi$, where $\xi$ is least such that $\<V,\in,\mathcal A>\models\varphi(x,A_\xi,B)$, and so we can collect the witnessing classes. Next, we argue that the $\omega$-dependent class choice scheme holds in $\mathcal V$. Suppose that $\mathcal V\models\forall X\,\exists Y\,\varphi(X,Y,B)$ and let $\mathcal A$ be a codable hyperclass such that $\<V,\in,\mathcal A>$ satisfies this sentence as well. Again, using some $A$ whose slices code the elements of $\mathcal A$, we can pick out an $\omega$-chain of choices for $\varphi(X,Y)$.
\end{proof}
It was shown in \cite{FriedmanGitman:ModelOfACNotDCInaccessible} that the $\omega$-dependent class choice scheme does not follow from $\KM^+$, and thus, in particular, second-order reflection can fail in models of $\KM^+$.

\begin{comment}
It is not known whether the $\omega$-dependent choice scheme follows from the class choice scheme, meaning that it is also not know whether $\KM^+$ proves second-order reflection. Because models of $\KM$ are bi-interpretable with models of $\ZFC^-$ in which there is a largest cardinal that is furthermore inaccessible, this question is intimately connected to the question posed by Zarach~\cite{Zarach1996:ReplacmentDoesNotImplyCollection} about whether reflection holds in models of $\ZFC^-$, i.e. whether every formula is reflected by some transitive set in the model. If it can be shown that second-order reflection fails in a $\KM^+$-model, then this translates directly to the failure of first-order reflection in the corresponding $\ZFC^-$-model.
\end{comment}
Analogous to how the the principle of dependent choices can be generalized in first-order set theory, we can generalize definition~\ref{def:omegaDependentChoice} to consider the $\kappa$-dependent class choice scheme for any cardinal $\kappa$, as well as the statement that the $\kappa$-dependent class choice scheme holds for all cardinals $\kappa$.
\begin{definition}
If $\kappa$ is a cardinal, then the \emph{$\kappa$-dependent class choice} scheme is the following scheme of assertions, where $\varphi(X,Y,W)$ is any formula in language of second-order set theory and $A$ is a class parameter
$$\forall \beta{<}\kappa\,\forall X:\beta\to \Cl\,\,\exists Y\,\varphi(X,Y,A)\rightarrow \exists Z:\ORD\to \Cl\,\,\forall\beta{<}\kappa\ \varphi(Z\restrict\beta,Z_\beta,A).$$
\end{definition}
Finally, we can consider the $\ORD$-dependent class choice scheme.
\begin{definition}
The \emph{$\ORD$-dependent choice} scheme is the following scheme of assertions, where $\varphi(X,Y,W)$ is any formula in language of second-order set theory and $A$ is a class parameter
$$\forall \beta{\in}\ORD\,\forall X:\beta\to \Cl\,\,\exists Y\,\varphi(X,Y,A)\rightarrow \exists Z:\ORD\to \Cl\,\,\forall\beta{\in}\ORD\,\varphi(Z\restrict\beta,Z_\beta,A).$$
\end{definition}
It is not difficult to see that the $\ORD$-dependent class choice scheme implies the class choice scheme. Supposing that $\forall \xi{\in}\ORD\,\exists X\,\varphi(\xi,X,A)$ holds in a $\KM$-model, we consider the relation $\psi(X,Y,A)$, which asserts that $\varphi(\beta,Y,A)$ holds, where $\beta$ is the domain of $X$. Clearly for every $X:\beta\to \Cl$, there is $Y$ such that $\psi(X,Y,A)$ holds. Now, applying the $\ORD$-dependent class choice scheme, we obtain a class $Z$ coding a sequence of classes such that for every ordinal $\beta$, $\varphi(Z\restrict\beta,Z_\beta,A)$ holds. But then it follows by induction that for all $\beta\in\ORD$, $\varphi(\beta,Z_\beta,A)$ holds.

It is not known at present whether the more general dependent class choice schemes can be separated from the $\omega$-dependent class choice scheme or from each other. 
\begin{comment}
Nothing seems to be currently known about whether any of the dependent choice principles can be separated or whether they follow from the class choice scheme. Friedman, Gitman, and Antos recently observed that $\KM$, $\KM^+$, and $\KM$ together with the $\ORD$-dependent choice scheme are all equiconsistent because given a model $\<V,\in,\Cl>$ of $\KM$, its $L$ together with the ``constructible" classes, meaning those classes that are elements of an $L_X$ constructed according to a well-order $X\in\Cl$, is a model of $\KM$ together with the $\ORD$-dependent choice scheme.
\end{comment}
The $\ORD$-dependent choice scheme was used in the work of Friedman and Antos~\cite{antos:thesis}, who showed that the theory $\KM$ together with the $\ORD$-dependent choice scheme is preserved by all tame definable hyperclass forcing, a result that so far cannot be generalized to just $\KM^+$. Tameness is the necessary and sufficient condition for class or hyperclass forcing to preserve $\ZFC$ for sets. In contrast, Antos showed that both $\KM$ and $\KM^+$ are preserved by all tame class forcing.

\section{Appendix}
\subsection{Symmetric models}\label{subsec:symmetricModels}
The arguments below are standard in symmetric model constructions (see for example~\cite{dimitriou:thesis}, Lemma 1.29) and we include them here only for completeness.

A partial order $\P$ is said to be weakly homogeneous if for any two conditions $p,q\in \P$, there is an automorphism $\pi$ of $\P$ such that $\pi(q)$ is compatible to $p$. Let $\kappa$ be a regular cardinal and $\P_\alpha$ for $\alpha<\kappa$ be weakly homogeneous partial orders. Consider the bounded-support (or full-support) product $\P=\Pi_{\alpha<\kappa}\P_\alpha$. Let us say that an automorphism $\pi$ of $\P$ \emph{respects coordinates} if it is made up of the automorphisms on the individual coordinates, that is if there are automorphisms $\pi_\alpha$ of $\P_\alpha$ such that $\pi(p)=\<\pi_\alpha(p(\alpha))\mid \alpha<\kappa>$. Let $\mathcal G$ be the group of all coordinate-respecting automorphisms of $\P$. Let $\Fil$ be the filter on $\mathcal G$ generated by the subgroups $H_\alpha$ for $\alpha<\kappa$ consisting of all automorphisms $\pi$ such that $\pi_\beta$ is identity for all $\beta<\alpha$, so $\pi$ has a nontrivial action only at $\alpha$ and beyond. If $\sigma$ is a $\P$-name, we call $\sym(\sigma)$ the subgroup of $\mathcal G$ consisting of the automorphisms fixing $\sigma$. A $\P$-name $\sigma$ is \emph{symmetric} if $\sym(\sigma)$ is in $\Fil$, meaning that it contains some $H_\alpha$. Let $\HS^{\Fil}$ be the collection of all hereditarily symmetric $\P$-names.

Given a condition $p\in\P$ and an ordinal $\beta<\kappa$, we will denote by $p^{\restrict\beta}$ the condition $p\restrict\beta$ concatenated with the trivial tail. 
\begin{lemma}
Given the homogeneity assumptions above, suppose that $p\in \P$ and $p\forces\check\xi\in\sigma$ for some hereditarily symmetric $\P$-name $\sigma$ such that $H_\beta\subseteq \sym(\sigma)$, then $p^{\restrict\beta}\forces \check\xi\in\sigma$.
\end{lemma}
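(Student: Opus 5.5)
We argue by contradiction, using the standard homogeneity argument. Suppose $p^{\restrict\beta}\not\forces\check\xi\in\sigma$. Then there is a condition $q\leq p^{\restrict\beta}$ with $q\forces\check\xi\notin\sigma$. The goal is to find an automorphism $\pi\in H_\beta$ such that $\pi(p)$ is compatible with $q$.

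To build $\pi$, we work coordinate by coordinate. For $\alpha<\beta$ we let $\pi_\alpha$ be the identity. Since $q\leq p^{\restrict\beta}$, we have $q(\alpha)\leq p(\alpha)$ for these $\alpha$, so $\pi(p)$ and $q$ agree in compatibility below $\beta$. For each $\alpha\geq\beta$ we use the weak homogeneity of $\P_\alpha$ to choose an automorphism $\pi_\alpha$ of $\P_\alpha$ with $\pi_\alpha(p(\alpha))$ compatible with $q(\alpha)$. Where both $p(\alpha)$ and $q(\alpha)$ are trivial, we take $\pi_\alpha$ to be the identity. These choices are made in the ground model using \AC.

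Then $\pi=\<\pi_\alpha\mid\alpha<\kappa>$ is a coordinate-respecting automorphism lying in $H_\beta$. We check that $\pi(p)$ and $q$ are compatible in the product. On each coordinate we take a common extension $r(\alpha)$ of $\pi(p)(\alpha)$ and $q(\alpha)$. On coordinates where both are trivial, $r(\alpha)$ is trivial as well. So the support of $r$ is contained in the union of the supports of $p$ and $q$, and $r$ is a legitimate condition in both the bounded-support and the full-support case.

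Now we apply the symmetry lemma for automorphisms: $p\forces\check\xi\in\sigma$ implies $\pi(p)\forces\pi(\check\xi)\in\pi(\sigma)$. Check names are fixed by every automorphism, and $\pi\in H_\beta\subseteq\sym(\sigma)$ gives $\pi(\sigma)=\sigma$. Hence $\pi(p)\forces\check\xi\in\sigma$. This contradicts $q\forces\check\xi\notin\sigma$, since $q$ and $\pi(p)$ are compatible.

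The main point to be careful about is the support bookkeeping in the compatibility step, and making sure $\pi$ is the identity below $\beta$ so that it lies in $H_\beta$. Once those are checked, the rest is routine.
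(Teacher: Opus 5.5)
Your proof is correct and is essentially the paper's argument: the paper takes $q\leq p^{\restrict\beta}$ with $q\forces\check\xi\notin\sigma$ and uses weak homogeneity to find $\pi\in H_\beta$ with $\pi(q)$ compatible with $p$, whereas you move $p$ rather than $q$, which makes no difference since $H_\beta$ is a group. Your coordinatewise construction of $\pi$ (identity below $\beta$) and your support check for the common extension just spell out details that the paper leaves implicit.
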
\label{le:InitialSegmentDecidesMembership}

\begin{proof}
If the conclusion fails, then there is $q\leq p^{\restrict\beta}$ such that $q\forces \check\xi\not\in\sigma$. Since the $\P_\alpha$ are weakly homogeneous, there is an automorphism $\pi\in H_\beta$ such that $\pi(q)$ is compatible with $p$. Since $H_\beta$ is contained in $\sym(\sigma)$, we know $\pi(q)\forces \check\xi\notin \sigma$. But, any condition $r\leq p,\pi(q)$ must force both that $\check\xi\in\sigma$ and that $\check\xi\notin\sigma$, which is a contradiction.
\end{proof}

Let $G\subseteq \P$ be $V$-generic and $G_\alpha$ be the restrictions of $G$ to $\overline{\P}_\alpha=\Pi_{\beta<\alpha}\P_\alpha$. Consider the symmetric submodel $N\of V[G]$ arising from the hereditarily symmetric names:
 $$N=\set{\sigma_G\mid \sigma\in \HS^{\Fil}}.$$
The standard symmetric model arguments show that this is a model of \ZF.

\begin{lemma}\label{le:SubsetsOfOrdinalsOfSymmetricModels}
The sets of ordinals of $N$ are precisely those added by proper initial segments $\overline{\P}_\alpha$ of $\P$, namely those in some $V[G_\alpha]$.
\end{lemma}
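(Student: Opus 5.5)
We prove the two inclusions separately.

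For the easy direction, suppose $A\subseteq\gamma$ is a set of ordinals lying in some $V[G_\alpha]$ with $\alpha<\kappa$. Choose a $\overline{\P}_\alpha$-name $\dot A$ for $A$. We may view $\dot A$ as a $\P$-name in the natural way, using conditions of the form $p^{\restrict\alpha}$. We may also take it to have the form $\set{(\check\xi,p)\mid \xi<\gamma,\ p\in\overline{\P}_\alpha}$, so that it involves only check names and conditions supported below $\alpha$. Every $\pi\in H_\alpha$ acts as the identity on conditions supported below $\alpha$ and fixes check names. Hence $H_\alpha\subseteq\sym(\dot A)$, and the same holds hereditarily, since the constituents of $\dot A$ are check names, which are fixed by all of $\mathcal G$. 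So $\dot A\in\HS^{\Fil}$ and $A=\dot A_G\in N$.

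For the main direction, let $A\in N$ be a set of ordinals, say $A=\sigma_G\subseteq\gamma$ with $\sigma\in\HS^{\Fil}$ and $H_\beta\subseteq\sym(\sigma)$ for some $\beta<\kappa$. Define in $V[G_\beta]$ the set
$$B=\set{\xi<\gamma\mid \exists p\in G\ \bigl(p^{\restrict\beta}=p,\ p\forces\check\xi\in\sigma\bigr)},$$
where we identify $G_\beta$ with the conditions of $G$ that are trivial on the tail. Equivalently, $B$ is the set of $\xi$ such that some $q\in G_\beta$ has its trivial extension forcing $\check\xi\in\sigma$. This definition uses only $G_\beta$ and the forcing relation of $V$, so $B\in V[G_\beta]$. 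We claim $A=B$.

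If $\xi\in B$, the witnessing condition lies in $G$ and forces $\check\xi\in\sigma$, so $\xi\in A$. Conversely, if $\xi\in A$, some $p\in G$ forces $\check\xi\in\sigma$. By lemma~\ref{le:InitialSegmentDecidesMembership}, $p^{\restrict\beta}\forces\check\xi\in\sigma$, and $p^{\restrict\beta}\in G$ since it is weaker than $p$. Hence $\xi\in B$.

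The one point needing care is the homogeneity step inside lemma~\ref{le:InitialSegmentDecidesMembership}. Given $q\le p^{\restrict\beta}$, we need a coordinate-respecting automorphism in $H_\beta$ carrying $q$ to a condition compatible with $p$. This is built coordinatewise: at each coordinate $\ge\beta$ we use weak homogeneity of the factor, and below $\beta$ we use the identity, where $q$ already extends $p$. Under bounded (or full) support, the resulting coordinatewise $\pi$ must still be an automorphism of the product and preserve supports. This holds because we may take $\pi_\alpha=\mathrm{id}$ wherever both conditions are trivial.

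The lemma is already stated, so we may assume it. The remaining subtlety is only the bookkeeping that $B$ is definable in $V[G_\beta]$, which follows from the definability of the forcing relation in $V$.
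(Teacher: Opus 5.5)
Your proof is correct and is essentially the paper's argument. The paper builds the $\overline{\P}_\beta$-name $\sigma^*=\set{\langle\check\xi,q^{\restrict\beta}\rangle\mid q\leq p,\ q\forces\check\xi\in\sigma}$ and uses the preceding initial-segment lemma to show $p\forces\sigma=\sigma^*$, which amounts to your defining $B$ directly in $V[G_\beta]$. You also verify the converse inclusion, that sets of ordinals in $V[G_\alpha]$ have names fixed by $H_\alpha$, which the paper's proof leaves implicit.
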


\begin{proof}
Suppose that $A$ is a subset of ordinals in $N$. Let $\sigma\in \HS^{\Fil}$ be a $\P$-name such that $\sigma_G=A$. Then some condition $p\in G$ forces that $\sigma$ is a subset of ordinals. Since $\sigma\in \HS^{\Fil}$, there is $\beta$ such that $H_\beta\subseteq\sym(\sigma)$. Consider the $\P$-name  $\sigma^*=\set{\<\check\xi,q^{\restrict\beta}>\mid q\leq p,q\forces\check\xi\in\sigma}$, which can be viewed in the obvious way as a $\overline{\P}_\beta$-name. It suffices to argue that $p\forces\sigma=\sigma^*$. Let $H\subseteq \P$ be some $V$-generic containing $p$ and suppose that $\xi\in \sigma_H$. Then there is $q\leq p$ such that $q\forces \check\xi\in \sigma$ and $q\in H$, from which it follows that  $\<\check\xi,q^{\restrict\beta}>\in \sigma^*$ and $q^{\restrict\beta}\in H$. So we have $\xi\in \sigma^*_H$. Next, suppose that $\xi\in\sigma^*_H$. Then there is a condition $q\forces \check\xi\in \sigma$ such that  $\<\check\xi,q^{\restrict\beta}>\in \sigma^*$ and $q^{\restrict\beta}\in H$. But by lemma~\ref{le:InitialSegmentDecidesMembership}, it follows that $q^{\restrict\beta}\forces\check\xi\in\sigma$, and so $\xi\in\sigma_H$.
\end{proof}

\subsection{Homogeneous $\kappa$-Suslin trees}\label{sec:trees}

Suppose that $\kappa$ is an inaccessible cardinal. Let's describe a forcing $\Q$ to add a homogeneous $\kappa$-Suslin tree. Given an ordinal $\alpha$, we call a subtree of the tree ${}^{\lt\alpha} 2$ a \emph{normal} $\alpha$-\emph{tree} if it has height $\alpha$, splits at every successor level node, and each of its nodes has successors on every higher level. Given a subtree $T$ of ${}^{\lt\alpha} 2$, we say that a collection $\mathscr C$ of automorphisms of $T$ acts \emph{transitively} on levels if for any two nodes on the same level of $T$, there is an automorphism in $\mathscr C$ that maps one to the other, and we say that $T$ is \emph{homogeneous} if it has such a collection of automorphisms. Note, in particular, that a homogeneous tree $T$ is almost homogeneous as a forcing notion. The conditions in $\Q$ are pairs $(T,F)$, where $T$ is a homogeneous normal $\alpha+1$-tree for a limit ordinal $\alpha$ and $F=\<f_\xi\mid\xi<\lambda>$ is an enumeration of automorphisms of $T$ that act transitively on levels, for some $\lambda<\kappa$. The ordering on $\Q$ is that $(T',F')\leq (T,F)$, where $F=\<f_\xi\mid\xi<\lambda>$ and $F'=\langle f'_\xi\mid\xi<\lambda'\rangle$, if $T'$ end-extends $T$ and $\lambda'\geq \lambda$, and furthermore $f'_\xi$ extends $f_\xi$ for all $\xi<\lambda$. Note that an automorphism of a normal $\alpha+1$-tree is determined by how it permutes the nodes on level $\alpha$, and every such permutation generates an automorphism of the tree. The basic idea of the forcing is that conditions specify a commitment to an initial segment of the generic tree that will be created together with a commitment to how the $\xi$th automorphism will act on that part of the tree. 

\begin{lemma}\label{le:tall-extensions}
Every condition $(T,F)$ in $\Q$ has extensions to conditions $(T',F')$ where $T'$ is as tall as desired below $\kappa$.
\end{lemma}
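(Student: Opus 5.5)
We must show that, given $(T,F)$ with $T$ a homogeneous normal $\alpha+1$-tree and $F=\<f_\xi\mid\xi<\lambda>$ acting transitively on levels, and given any $\beta<\kappa$, there is $(T',F')\leq(T,F)$ with height of $T'$ at least $\beta$. The plan is to build a descending sequence of conditions $(T_\gamma,F_\gamma)$ of length up to $\beta$, extending the tree by one limit-many levels at successor stages and taking unions (plus a carefully chosen top level) at limit stages. Since $\kappa$ is inaccessible, all trees involved have size ${<}\kappa$, so all enumerations of automorphisms have length ${<}\kappa$ and the final condition lies in $\Q$.

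\textbf{Successor step.} Given a homogeneous normal $\alpha+1$-tree $T$ with automorphisms $F$, extend $T$ to a normal $\alpha+\omega+1$-tree $T'$ by placing above each top node $s$ a full copy of the binary tree: $T'$ consists of $T$ together with all $s^\frown t$ with $s$ on level $\alpha$ and $t\in{}^{\leq\omega}2$ (restricted so that the top level $\alpha+\omega$ consists of, say, all $s^\frown x$ with $x\in{}^{\omega}2$). Each $f_\xi$ extends to $f'_\xi(s^\frown t)=f_\xi(s)^\frown t$. To keep transitivity on the new levels, add further automorphisms: for each $s$ on level $\alpha$ and each automorphism of the binary tree (e.g. bit flips $x\mapsto x\oplus y$), act on the cone above $s$ and identity elsewhere; composing these with the extended $f_\xi$ gives transitivity on every new level. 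Append these (fewer than $\kappa$ many) to the end of $F$ to obtain $F'$. Then $(T',F')\leq(T,F)$.

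\textbf{Limit step (main obstacle).} At a limit $\gamma$, the union $T_{<\gamma}$ of the trees has limit height $\alpha^*$ and the union of the coherent automorphism sequences gives automorphisms $g_\xi$ of $T_{<\gamma}$. We must choose a top level $\alpha^*$ consisting of cofinal branches so that every node has a branch through it, every $g_\xi$ maps the chosen set of branches to itself, and the resulting family is still transitive. I would take the top level to be the closure of a set of branches under the group $\mathcal{G}$ generated by all the $g_\xi$: pick for each node $u\in T_{<\gamma}$ one cofinal branch $b_u$ through $u$ (which exists by normality, building it level by level, with a limit-stage extension available since each earlier condition had a top level), and let the top level be $\set{g(b_u)\mid g\in\mathcal G, u\in T_{<\gamma}}$. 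This has size ${<}\kappa$ by inaccessibility, is closed under each $g_\xi$, and so each $g_\xi$ extends to an automorphism of the new $\alpha^*+1$-tree. Transitivity on the new top level may fail, so, as in the successor step, I would add extra automorphisms: for two top branches $b,c$, use the $\mathcal G$-action and the cone-flipping automorphisms from successor stages to map $b$ to $c$. The main worry is whether the limit closure of such partial maps preserves the chosen top level; if not, I would instead choose the top level as a single $\mathcal G'$-orbit for a suitable generated group, ensuring it is closed by construction. Once limit steps work, iterating $\beta$ times produces the required tall extension.
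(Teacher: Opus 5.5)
Your successor step is sound; in fact it is the paper's entire proof with $\beta=\omega$. The gap is in the limit step, which you leave open.

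You claim that each node $u$ of $T_{<\gamma}$ lies on a cofinal branch ``by normality,'' with a limit-stage extension available because each earlier condition had a top level. But having a top level does not mean that the particular branch you have built through $T_{<\delta}$, at an earlier limit stage $\delta<\gamma$, is one of its nodes. That top level contains only a small set of the branches through $T_{<\delta}$. When $\mathrm{cf}(\gamma)>\omega$, normality alone gives no cofinal branch at all. This is exactly why $\Q$ is only strategically closed, and why the proof of Lemma~\ref{le:tree-forcing-strat-closed} carries a designated node $t_\alpha$ along the play.

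Separately, you take the top level to be a union of several $\mathcal G$-orbits. Nothing in your construction provides an automorphism that extends the $g_\xi$'s and moves one orbit onto another. An arbitrary automorphism of $T_{<\gamma}$ sending $b$ to $c$ need not preserve the chosen top level. So homogeneity of the limit tree is not established, and your fallback is not made precise.

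The limit step can be repaired. Carry one branch through the entire construction, and at each limit take the new top level to be the $\mathcal G$-orbit of that single branch. Since $\mathcal G$ is level-transitive on $T_{<\gamma}$, this orbit passes through every node, it is $\mathcal G$-invariant, and $\mathcal G$ acts transitively on it.

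But the iteration is unnecessary. Your successor construction works verbatim with ${}^{\leq\beta}2$ in place of ${}^{\leq\omega}2$, for any limit ordinal $\beta<\kappa$:
\begin{itemize}
\item the new top level has size at most $|T|\cdot 2^{|\beta|}<\kappa$ by inaccessibility;
\item the cone-wise maps $t\mapsto t\oplus(y\restrict|t|)$ with $y\in{}^{\beta}2$ are level-transitive on each copy;
\item $\alpha+\beta$ is again a limit ordinal.
\end{itemize}
So a single step yields $(T',F')\leq(T,F)$ of height $\alpha+\beta+1$, which is the paper's argument.

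One small point applies in either version. Transitivity requires a single listed automorphism to carry one node to another, not a composition. So let $F'$ list the extended $f_\xi$ followed by all automorphisms of $T'$, of which there are fewer than $\kappa$.
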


\begin{proof}
Suppose we are given the condition $(T,F)$, where $T$ is an $\alpha+1$ tree. For any $\beta<\kappa$, we can place a copy of the full tree $2^{\leq\beta}$ on top of every node on the top level of $T$, thereby forming an extension tree $T'$. And we can trivially extend each automorphism $f_\xi$ in $F$ to an automorphism of $T'$ by mapping these various copies of $2^{\leq\beta}$ to each other and furthermore add more automorphisms that permute within each $2^{\leq\beta}$ so as to have a level-transitive action. Thus, we have found $(T',F')\leq(T,F)$ where $T'$ has height $\alpha+\beta+1$. 
\end{proof}

\begin{lemma}\label{le:tree-forcing-strat-closed}
The forcing $\Q$ is $\ltkappa$-strategically closed.
\end{lemma}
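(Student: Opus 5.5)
We describe a strategy for player II in the game of length $\gamma<\kappa$ (for any $\gamma<\kappa$) on $\Q$, in which player I plays at odd stages (and, say, at stage $0$) and player II plays at even stages, including limit stages. Player II must always have a legal move. Successor stages are no trouble: by lemma~\ref{le:tall-extensions}, II can extend any condition, and II will always choose $(T',F')$ with $T'$ strictly taller than before. The whole difficulty is at limit stages $\lambda<\gamma$. There II must produce a lower bound for the descending sequence $(T_i,F_i)$, $i<\lambda$. The heights $\alpha_i$ increase to a limit ordinal $\alpha<\kappa$, and the union $T^*=\bigcup_i T_i$ is a normal $\alpha$-tree. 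The union of the enumerations gives automorphisms $f^*_\xi=\bigcup_i (f_\xi)_i$ of $T^*$, for $\xi<\lambda^*=\sup_i\lambda_i<\kappa$. These act transitively on every level below $\alpha$.

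To obtain a condition, II must add a top level $\alpha$ consisting of some cofinal branches of $T^*$. This level has to meet two requirements. First, every node of $T^*$ must lie on some chosen branch, so that the tree is normal. Second, the level must be closed under every $f^*_\xi$, so that each automorphism extends, and some extended collection of automorphisms must act transitively on the new top level. The natural choice is to fix, for each node $t\in T^*$, one branch $b_t$ through $t$. We then close the set $\{b_t\}$ under the group generated by the $f^*_\xi$ and take the resulting orbit as level $\alpha$.

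The catch is that $b_t$ might not be a branch that can be chosen coherently. Any cofinal branch can be put on top, and the images of a branch under automorphisms of $T^*$ are again cofinal branches. So the set of branches is fine, and its size is at most $|T^*|\cdot|\text{group}|<\kappa$ by inaccessibility. Transitivity on the new level is not automatic, however, since different orbits may appear. To handle this, II should maintain the invariant along the play that the tree built so far, together with its automorphisms, has a single distinguished branch $b$, namely the union of designated nodes chosen by II at its moves. We also require that every node on a level in $T_i$ is moved to the designated node on that level by one of the $f_\xi$ enumerated in $F_i$.

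At the limit stage II then takes level $\alpha$ to be the orbit of $b$ under the group generated by the $f^*_\xi$. The designated nodes cohere because II chooses each one above the previous designated node; player I's moves do not destroy this, since II simply chooses its next designated node above the old one. Each $f^*_\xi$ permutes this orbit, so the automorphisms extend. Transitivity is immediate: any two points of the orbit are $g(b)$ and $h(b)$, so $hg^{-1}$ maps one to the other, and II adds such compositions to $F$. Normality holds because every node $t$ is mapped to a designated node $d$ by some $f$, and then $f^{-1}(b)$ passes through $t$.

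The main obstacle I anticipate is making this invariant precise so that it survives player I's arbitrary moves. It has to be verified that the extended automorphisms remain well defined, by checking that an automorphism of a normal tree is determined by its action on the top level and that group elements act consistently on the orbit. The other size bounds below $\kappa$ follow routinely from inaccessibility.
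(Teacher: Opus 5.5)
Your proposal is correct and is essentially the paper's proof. Player II tracks a designated branch, choosing each new designated node above the previous one. At limit stages II tops the union tree with exactly the orbit of that branch under the group generated by the unioned automorphisms; this gives level-transitivity, and normality follows because the earlier conditions act transitively on levels. Note that the second half of your invariant, that every node on a level is moved onto the designated node by some listed automorphism, is automatic from the definition of conditions in $\Q$. So player I's moves cannot break it, and the obstacle you anticipate does not arise.
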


\begin{proof}
Consider the two-player game of length $\kappa$, where the players build a descending $\kappa$-sequence of conditions $(T_\alpha,F_\alpha)$ in the forcing $\Q$. Our opponent plays first, but we play first at all limit stages. We win the play, if indeed we are able to play all the way to $\kappa$. 

Given the opening move $(T_0,F_0)$, what we are going to do is pick a particular node $t_0$ on the top level of $T_0$, and then systematically extend this node as the play continues. We will promise that the resulting branch survives through all limit stages of the play. To make our first move, we extend $(T_0,F_0)$ to a stronger condition $(T_1,F_1)$, and pick $t_1$ above $t_0$ and on the top level of $T_1$. Similarly, at successor stages of play, if we have just played $(T_\alpha,F_\alpha)$, having specified a node $t_\alpha$ on the top level, and our opponent plays $(T_{\alpha+1},F_{\alpha+1})$, then we extend $t_\alpha$ to a node $t_{\alpha+1}$ on the top level of $T_{\alpha+1}$, and then we can extend further to any stronger condition $(T_{\alpha+2},F_{\alpha+2})$, picking $t_{\alpha+2}$ above $t_{\alpha+1}$ on the top level of $T_{\alpha+2}$. At limit stages $\lambda<\kappa$, we can form the union of the earlier trees $T^*_\lambda=\Union_{\alpha<\lambda}T_\alpha$, which has some limit height $\eta$. The nodes $\<t_\alpha\mid\alpha<\lambda>$ provide a branch through this tree, and we now place a node $t_\lambda$ at level $\eta$ exactly on top of that branch. For the automorphisms, for each $\xi$ for which corresponding automorphisms $f_\xi$ are eventually defined in the lists $F_\alpha$, we take the union $f^*_\xi=\Union_{\alpha<\xi}f^\alpha_\xi$, where $f^\alpha_\xi$ is the $\xi$th automorphism listed in $F_\alpha$, if defined, and observe that it is an automorphism of $T^*_\lambda$. We close the collection of the $f^*_\xi$ under the group operations to obtain a subgroup $\mathscr G$ of the automorphism group of $T^*_\lambda$. We then extend the tree $T^*_\lambda$ to the tree $T_\lambda$ by adding a node on top of the branch $\langle t_\alpha\mid\alpha<\lambda\rangle$ and on top of every image of that branch under the automorphisms in $\mathscr G$. Each automorphism $f^*_\xi$ extends in an obvious fashion to the corresponding automorphism $f_\xi$ of $T_\lambda$. Note that $T_\lambda$ is normal. Note also that because $\mathscr G$ is a group and the top level of $T_\lambda$ consists of the images of a single node under the automorphisms in $\mathscr G$, the group $\mathscr G$ acts transitively on $T_\lambda$. Finally, we let $F_\lambda$ enumerate the automorphisms in $\mathscr G$ by having $F_\lambda(\xi)=f_\xi$ and adding the rest of the automorphisms in $\mathscr G$ afterwards. We then play $(T_\lambda,F_\lambda)$ at this stage of the game. 

In summary, we keep track of a specified branch through the trees as the game play proceeds and the trees grow taller. At limit stages, we make sure to put a point on top of that branch, and we keep as other nodes on that level only the nodes that would arise from the (group generated by the) automorphisms of the tree to which we are already committed. 
\begin{comment}
This will be a normal $\eta+1$ tree, because the images of the nodes $t_\alpha$ fill out the rest of that level, and so the branch can be copied so as to cover any desired node in $T^*$. And our action is level transitive on $T_\lambda$, because we only kept the nodes on the top level that could arise as an image of $t_\lambda$. 
\end{comment}
\end{proof}

It follows that $\Q$ is $\ltkappa$-distributive, and so in particular it will preserve the inaccessibility of $\kappa$. Suppose that $G\subseteq \Q$ is $V$-generic. In $V[G]$, let $\mathcal T$ be the generic tree which is the union of all first coordinates of conditions in $G$. By lemma \ref{le:tall-extensions}, it follows that $\mathcal T$ is a $\kappa$-tree. Also, $\mathcal T$ is homogeneous, as witnessed by the automorphisms provided in the second coordinates. Next, we show that $\mathcal T$ is $\kappa$-Suslin by proving a variant of the standard sealing lemma. Let $\dot {\mathcal T}$ be the canonical name for $\mathcal T$.
\begin{lemma}\label{le:sealing}
If $(T,F)\in\Q$ forces that $\dot A$ is a maximal antichain of $\dot{\mathcal T}$, then there is $(T',F')\leq (T,F)$ which forces that $\dot A$ is contained in $T'$.
\end{lemma}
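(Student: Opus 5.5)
The plan is to run the standard sealing argument for Suslin trees through the strategic-closure game of lemma~\ref{le:tree-forcing-strat-closed}, building a descending sequence of conditions up to a suitable limit height $\lambda<\kappa$. At the limit we only add nodes above branches that are already forced to pass through elements of $\dot A$.

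\textbf{Step 1: the descending sequence.} Working in $V$, we build a descending sequence $(T_\alpha,F_\alpha)$ for $\alpha<\lambda$ below $(T,F)$, following the strategy from lemma~\ref{le:tree-forcing-strat-closed} at limit stages; the right choice of $\lambda$ is settled in Step~2. At successor stages we do bookkeeping, which requires fewer than $\kappa$ tasks per stage since $\kappa$ is inaccessible and each $T_\alpha$ has size ${<}\kappa$. For every node $s\in T_\alpha$ we extend to decide, for some node $a\in T_{\alpha+1}$ compatible with $s$, that $\check a\in\dot A$. This is possible because $\dot A$ is forced to be maximal, so some condition forces some element of $\dot A$ to be compatible with $s$. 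Using lemma~\ref{le:tall-extensions}, we may extend so that this element lies inside the next tree. We also make sure the heights of the $T_\alpha$ increase.

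\textbf{Step 2: the key limit stage.} Take $\lambda$ to be a limit ordinal of cofinality $\omega$, say $\lambda=\omega$ after the initial stages, or any limit the game allows. Let $T^*=\bigcup_{\alpha<\lambda}T_\alpha$, of height $\eta$. The point is to extend $T^*$ to a normal $\eta+1$-tree by adding nodes only on top of branches $b$ that meet some node $a$ forced into $\dot A$. We again use the strategy's device: fix one branch $b_0$ through the $t_\alpha$, and close the limits of the $f^*_\xi$ under group operations to get $\mathscr G$. We add tops only to the $\mathscr G$-images of $b_0$. Each such image is a branch through every $T_\alpha$, so it contains nodes above every $s\in T_\alpha$. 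By the bookkeeping there is some $a$ decided in $\dot A$ compatible with each such $s$, but we need $a$ to actually lie on the branch. To get this, we strengthen the bookkeeping: for each $s$ on the top level of $T_\alpha$, we ensure that in $T_{\alpha+1}$ every extension of $s$ on the new top level lies above some node forced into $\dot A$. This is achieved by extending $s$ to a node $s'$ above an element of $\dot A$ (possible by maximality), and then growing the tree only above such nodes. Homogeneity lets us copy this across the level using the committed automorphisms, which preserves the property because $\dot A$ is only decided, not fixed, by the automorphisms. I expect this to be the main obstacle. The fix is to handle all nodes $s$ on the level simultaneously, deciding $\dot A$ above each of them in turn through fewer than $\kappa$ extensions, and appealing to strategic closure for the intermediate limits.

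\textbf{Step 3: conclusion.} Let $(T',F')$ be the limit condition. Every node of $T'$ at height $\geq\eta$ lies above a node of $T'$ already forced into $\dot A$. Any element of $\dot A$ above height $\eta$ would then be comparable with, and distinct from, an element of $\dot A$, contradicting that $\dot A$ is an antichain. Hence $(T',F')$ forces $\dot A\subseteq T'$. Since $(T',F')\le(T,F)$, this proves lemma~\ref{le:sealing}.
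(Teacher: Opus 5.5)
\textbf{There is a genuine gap in Step~2.} Your limit-stage device is the right one: put nodes only on top of the $\mathscr G$-images of a single branch $b_0$. But you never say how to choose $b_0$ so that every image $g(b_0)$ passes through a node forced into $\dot A$, and the ``strengthened bookkeeping'' you put in its place does not work. You require that every top-level node of $T_{\alpha+1}$ lie above a node forced into $\dot A$. That is exactly the conclusion of the lemma for the single condition $(T_{\alpha+1},F_{\alpha+1})$, so the step is circular: if it could be done, Step~2 would be unnecessary. The mechanisms you suggest for it also fail:
\begin{itemize}
\item At a successor step you cannot grow the tree only above selected nodes. Conditions are normal trees in which every node has successors on every higher level, so pruning is possible only at a limit level.
\item Copying across a level by the committed automorphisms does not preserve the property. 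Tree automorphisms do not fix $\dot A$, so $g(a)$ need not be forced into $\dot A$ when $a$ is.
\item Treating the nodes of a level one at a time makes no progress. Deciding some $a\in\dot A$ above $s$ creates many new top-level nodes above $s$ that are not above $a$.
\end{itemize}

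\textbf{The missing idea} is to pull back through the automorphisms rather than push forward, choosing the branch adaptively. Suppose that at a successor stage $\gamma$ the branch node is $t_\gamma$, on the top level of $T_\gamma$, and bookkeeping selects a committed automorphism $f=F_\gamma(\eta)$.
\begin{itemize}
\item Use your Step~1 extension to get a node $t$ that is forced into $\dot A$ and compatible with $f(t_\gamma)$.
\item Pick $s'$ on the new top level above both $t$ and $f(t_\gamma)$.
\item Set $t_{\gamma+1}=F_{\gamma+1}(\eta)^{-1}(s')$. This lies above $t_\gamma$, since $F_{\gamma+1}(\eta)$ extends $f$.
\end{itemize}
Later automorphisms extend earlier ones, so the image of the final branch $B=\langle t_\gamma\mid\gamma<\alpha\rangle$ under the $\eta$th automorphism of the limit condition passes through $s'$, and hence lies above an element of $\dot A$. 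Once every automorphism used to form the new top level has been treated this way, that top level consists only of nodes above elements of $\dot A$, and your Step~3 finishes the argument.

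This also settles the length of the construction. The lists $F_\gamma$ keep lengthening, and the group closure at limits adds further automorphisms. So the construction must run to a closure point at which every such automorphism has been treated at some stage after it was committed. The paper takes $\alpha=M\cap\kappa$ for an elementary $M\prec H_{\kappa^+}$ of size less than $\kappa$, with a bookkeeping function in $M$ that lists each index cofinally often. A bare choice such as $\lambda=\omega$ with one task per step would not suffice, since there may be uncountably many automorphisms to treat.
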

\begin{proof}
We will do the usual bootstrap argument to build a tree $T'$ such that $(T',F')$ forces that $\dot A$ is contained in $T'$, while ensuring that the top level of $T'$ can be constructed as in the proof of lemma~\ref{le:tree-forcing-strat-closed} using the automorphic images of a single branch $B$. Thus, $B$ must have the property that the image branch of it under any automorphism $F'(\eta)$ has on it some node that is forced by $(T',F')$ to be in $\dot A$.

Fix a (nontransitive) elementary substructure $M\prec H_{\kappa^+}$ of size less than $\kappa$ containing $\Q,\dot A, (T,F)$ with the property that $\kappa\cap M=\alpha$ is an ordinal. Let's also fix a definable bookkeeping function $\varphi:\kappa\to\kappa$ such that each $\beta<\kappa$ appears cofinally often in the range of $\varphi$, which must then be in $M$.

Now we work inside $M$ until further notice. Let $T=T_0$, $F=F_0$. Let $t_0$ be any node on the top level of $T_0$. Let $(T_1,F_1)$ be any condition strengthening $(T_0,F_0)$ with the property that for every $s\in T_0$, there is $t_s\in T_1$ compatible with $s$ such that $(T_1,F_1)\forces t_s\in \dot A$. To argue that such $(T_1,F_1)$ exists, we go to a forcing extension $V[G]$ by $\Q$ with $(T_0,F_0)\in G$ and observe that, since $\kappa$ is regular there and $T_0$ has size less than $\kappa$, there is some level $\alpha$ of generic tree $\mathcal T$ such that for every $s\in T_0$, some element of $A=\dot A_G$ compatible with it appears below that level. Next, we consult the bookkeeping function $\varphi(0)=\eta$. If $\eta\geq\lambda_0$, let $t_1$ be any top level node of $T_1$ above $t_0$. Otherwise, let $s=F_0(\eta)(t_0)$ and let $s'$ be on the top level of $T_1$ above $s$ and $t_s$. Finally, let $t_1=F_1(\eta)^{-1}(s')$. Thus, we have extended the branch $B$ in such a way that the $\eta$th automorphism will map it to stay above an element of $\dot A$.  This procedure should be repeated at all successor stages below $\alpha$. At limit levels $\delta<\alpha$, we construct $(T_\delta,F_\delta)$ as in the proof of lemma~\ref{le:tree-forcing-strat-closed}.

Now back outside $M$, we end up with a sequence $\<(T_\xi,F_\xi)\mid \xi<\alpha>$ of conditions in $\Q$ together with the branch $B=\<t_\xi\mid\xi<\alpha>$ through $T^*=\Union_{\xi<\alpha}T_\xi$. Construct $(T_\alpha,F_\alpha)$ as in the proof of lemma~\ref{le:tree-forcing-strat-closed}. Now we argue that $(T_\alpha,F_\alpha)$ has the desired property, namely that the image of $B$ under any $F_\alpha(\eta)$ has some node $t$ that is forced by $(T_\alpha,F_\alpha)$ to be in $\dot A$. So fix some $\eta$ in the domain of $F_\alpha$ and let $\xi$ be some stage such that $\eta$ is already in the domain of $F_\xi$. Since $\varphi$ has the property that $\eta$ appears cofinally often in the range, there is some stage $\gamma>\xi$ such that $\varphi(\gamma)=\eta$. At this stage, we chose some $s'$ on the top level of $T_\gamma$ above a node that is forced to be in $\dot A$ and let $t_\gamma=F_\gamma(\eta)^{-1}(s')$, thus ensuring that the image of $B$ under $F_\alpha(\eta)$ extending $F_\gamma(\eta)$ will sit above a node that is forced to be in $\dot A$.
\end{proof}
\begin{lemma}\label{le:ltkappaClosedDenseSubset}
The two-step iteration $\Q*\dot{\mathcal T}$ has a dense subset that is $\ltkappa$-closed.
\end{lemma}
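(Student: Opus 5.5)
The dense subset I would use is
$$D=\set{((T,F),\check t)\mid (T,F)\in\Q,\ t \text{ is a node on the top level of } T}.$$
This is the usual device for Suslin-tree-adding forcings: the condition in $\Q$ decides the tree up to its top level, and the tree condition is a node on that top level.

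\textbf{Density.} Take any $((T,F),\dot s)\in\Q*\dot{\mathcal T}$. By the $\ltkappa$-distributivity coming from lemma~\ref{le:tree-forcing-strat-closed}, we can extend $(T,F)$ to some $(T_1,F_1)$ that decides $\dot s$ as a specific node $s$. Since $(T_1,F_1)$ forces $s\in\dot{\mathcal T}$ and $T_1$ is an initial segment of $\dot{\mathcal T}$, we may extend further (lemma~\ref{le:tall-extensions} if needed) so that $s\in T_1$. Normality of $T_1$ then gives a node $t$ on its top level above $s$. The pair $((T_1,F_1),\check t)$ lies in $D$ and extends the original condition, because $t\leq_{\mathcal T} s$ in the tree order (stronger means higher).

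\textbf{Closure.} Let $\gamma<\kappa$ and let $\langle((T_\xi,F_\xi),\check t_\xi)\mid\xi<\gamma\rangle$ be a descending sequence in $D$, where $\gamma$ is a limit (the successor case is trivial). The nodes $t_\xi$ increase in the tree order, each sitting on the top level of $T_\xi$, so they form a cofinal branch $B$ through $T^*=\Union_{\xi<\gamma}T_\xi$. This is exactly the limit step in the proof of lemma~\ref{le:tree-forcing-strat-closed}, with $B$ playing the role of the branch the strategic player carried along:
\begin{itemize}
\item take unions $f^*_\xi$ of the committed automorphisms;
\item let $\mathscr G$ be the group they generate;
\item place a node $t_\gamma$ on top of $B$, and one on top of each image of $B$ under $\mathscr G$;
\item extend each automorphism accordingly, and let $F_\gamma$ list the $f_\xi$ first and then the rest of $\mathscr G$.
\end{itemize}
The result $(T_\gamma,F_\gamma)$ is a normal, homogeneous tree of height $\eta+1$ with $\mathscr G$ acting transitively on levels, and it extends every $(T_\xi,F_\xi)$. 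Hence $((T_\gamma,F_\gamma),\check t_\gamma)\in D$ is a lower bound, and it has size below $\kappa$ by inaccessibility.

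\textbf{Main obstacle.} The hard point is that in $\Q$ alone a descending sequence can fail to have a lower bound, because its union might have no branch that survives to the limit level. In $D$ the $\dot{\mathcal T}$-coordinate supplies exactly the branch needed, so the limit construction always goes through. The things to check carefully are:
\begin{itemize}
\item the new top level is nonempty and every node of $T^*$ lies below some top-level node. This follows because $\mathscr G$ acts transitively on each level of $T^*$, so images of $B$ pass through every node, which gives normality;
\item $(T_\gamma,F_\gamma)\leq(T_\xi,F_\xi)$ for all $\xi<\gamma$;
\item $(T_\gamma,F_\gamma)$ forces $t_\gamma\leq t_\xi$ for all $\xi<\gamma$, which holds since $t_\gamma$ lies above $B$.
\end{itemize}
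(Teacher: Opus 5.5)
Your proof is correct and is essentially the paper's argument: the same dense set (second coordinate a node on the top level of the first coordinate's tree), the same density step, and closure obtained by running the limit construction from the proof of lemma~\ref{le:tree-forcing-strat-closed} along the branch the nodes supply, which you spell out in more detail than the paper's ``it is clear.'' One trivial case is not literally covered by your closure step: when the tree heights are eventually constant, $T^*$ already has a top level and no new level should be added, so the lower bound is just the eventual tree with the union of the automorphism lists and the eventual node.
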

\begin{proof}
Conditions in $\Q*\dot{\mathcal T}$ are pairs $((T,F),\dot a)$, where $\dot a$ is a $\Q$-name for an element of $\dot {\mathcal T}$. Now consider the subset $\s$ of $\Q*\dot{\mathcal T}$ consisting of conditions $((T,F),a)$, where $T$ is an $\alpha+1$-tree and $a$ is a node on level $\alpha$ of $T$. Let $((T,F),\dot a)$ be any condition in $\Q*\dot{\mathcal T}$. Then there is a stronger condition $((T',F'),a')$ with $a'\in T'$ forcing that $\dot a=a'$. Let $a$ be any node above $a'$ on the top level of $T'$. Clearly $((T',F'),a)$ is an element of $\s$ extending $((T,F),\dot a)$. Thus, $\s$ is dense in $\Q*\dot{\mathcal T}$. But since the node $a$ determines a branch through $T$ in a condition $((T,F),a)$ it is clear that $\s$ is $\ltkappa$-closed.
\end{proof}

Now let's consider the bounded-support product $\P=\Pi_{\xi<\kappa}\Q_\xi$ with all $\Q_\xi=\Q$. A straightforward extension of the argument in the proof of lemma~\ref{le:tree-forcing-strat-closed}, where we choose a single branch for every tree appearing in a condition, shows that $\P$ is $\ltkappa$-strategically closed and hence forcing with it preserves the inaccessibility of $\kappa$. Let $\<\mathcal T_\xi\mid\xi<\kappa>$ be the sequence of trees added by $\P$ to a forcing extension $V[G]$. The argument above shows that all the $\mathcal T_\xi$ are $\kappa$-trees. Indeed, all the $\mathcal T_\xi$ are $\kappa$-Suslin and more so, we will argue that the product of any less than $\kappa$ many of them has the $\kappa$-cc. To see this requires an easy generalization of the argument in the proof of lemma~\ref{le:sealing}. Fix $\lambda<\kappa$ and suppose that some condition $\<(T^{(\xi)},F^{(\xi)})\mid\xi<\alpha>$ forces that $\dot A$ is a maximal antichain of the product $\Pi_{\xi<\lambda}\dot{\mathcal T}_\xi$, where $\dot{\mathcal T}_\xi$ are the canonical names for the trees added by $\P$. We work in an elementary substructure $M$ of $H_{\kappa^+}$ of size less than $\kappa$ that is closed under $\lambda$-sequences and has the property that $M\cap\kappa=\alpha$ is an ordinal. In $M$, we fix a bookkeeping function $\varphi:\kappa\to{}^\lambda \kappa$ such that each element of ${}^\lambda \kappa$ appears cofinally often in the range. Because we chose an $M$ that is closed under $\lambda$-sequences, every $f:\lambda\to\alpha$ has to appear cofinally in the range of $\varphi\restrict\alpha$. We build up the desired condition $\<(T'^{(\xi)},F'^{(\xi)})\mid\xi<\alpha'>$ (with $T'^{(\xi)}$ $\alpha+1$-trees) sealing the antichain $\dot A$ by specifying branches $B_\xi$ of $T'^{(\xi)}$ (without the top level) for $\xi<\lambda$ with the property that any sequence of automorphisms $\<\pi_\xi\mid\xi<\lambda>$ with $\pi_\xi$ in the range of $F'^{(\xi)}$ moves branches $B_\xi$ to branches $B^*_\xi$ so that some sequence $\<t^{(\xi)}\mid \xi<\lambda>$ with $t^{(\xi)}\in B^*_\xi$ is forced by $\<(T'^{(\xi)},F'^{(\xi)})\mid\xi<\alpha'>$ to be in $\dot A$. For the additional coordinates beyond $\lambda$ that appear on the conditions as we extend, we simply specify any branch on their tree and use it to build the limit level of the final tree.

It now follows that forcing with a product $\<\mathcal T_\xi\mid\xi<\lambda>$ for some $\lambda<\kappa$ cannot add branches to any tree $\mathcal T_\alpha$ with $\alpha\geq\lambda$. This is because the product $(\Pi_{\xi<\alpha}\mathcal T_\xi)\times \mathcal T_\alpha$ has the $\kappa$-cc and therefore $\Pi_{\xi<\alpha}\mathcal T_\xi$ must force that $T_\alpha$ has the $\kappa$-cc.
The argument given in the proof lemma~\ref{le:ltkappaClosedDenseSubset} also generalizes to show that the two-step iteration $\P*\Pi_{\xi<\kappa}\dot{\mathcal T}_\xi$ has a $\ltkappa$-closed dense subset.

The results can now be collected in the following Lemma.
\begin{lemma}\label{le:trees}
If $\kappa$ is inaccessible or Mahlo in $V$, then $V$ has a forcing extension $V[G]$ in which $\kappa$ remains inaccessible or Mahlo respectively and there are $\kappa$-many homogeneous $\kappa$-Suslin trees $T_\xi$ for $\xi<\kappa$ with the property that for every $\lambda<\kappa$, the full-support product $\Pi_{\xi<\lambda} T_\xi$ has the $\kappa$-cc and is $\ltkappa$-distributive and does not add branches to any $T_\alpha$ with $\alpha\geq\lambda$. Moreover, $\kappa$ remains inaccessible or Mahlo respectively in any forcing extension by $\Pi_{\xi<\kappa}T_\xi$.
\end{lemma}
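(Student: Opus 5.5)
This lemma collects the preceding appendix arguments, so I will assemble the pieces, paying attention to the Mahlo case and to the clause about forcing with the full product. I force with the bounded-support product $\P=\Pi_{\xi<\kappa}\Q_\xi$ to obtain $V[G]$ with trees $T_\xi=\mathcal T_\xi$. As remarked after lemma~\ref{le:ltkappaClosedDenseSubset}, the strategy from lemma~\ref{le:tree-forcing-strat-closed} runs simultaneously on all coordinates, choosing one branch in each tree appearing in the support. Hence $\P$ is $\ltkappa$-strategically closed and adds no new ${<}\kappa$-sequences, so $V_\kappa$ is unchanged.

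\textbf{Inaccessible and Mahlo case.} Inaccessibility is preserved because $V_\kappa$ is unchanged. For Mahloness I also need that $\P$ adds no club in $\kappa$ avoiding the regulars. I would check this by the standard argument: $\P$ has size $\kappa$ and is strategically closed. Take a name for a club and a condition. Build a continuous elementary chain of models $M_\gamma$ of size ${<}\kappa$, and let the strategy play a descending sequence of conditions, each meeting the dense sets of the next model. Choosing $\gamma$ inaccessible with $M_\gamma\cap\kappa=\gamma$, which is possible by Mahloness, yields a condition forcing $\gamma$ into the club.

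\textbf{Homogeneity and the chain condition.} Each $T_\xi$ is a homogeneous $\kappa$-tree by lemma~\ref{le:tall-extensions} and the automorphisms in the second coordinates. The $\kappa$-cc of $\Pi_{\xi<\lambda}T_\xi$ for $\lambda<\kappa$ comes from the generalized sealing argument sketched after lemma~\ref{le:sealing}: every maximal antichain of the product is sealed below some level, so it has size ${<}\kappa$. Suslinness of each tree is the case $\lambda=1$. Non-addition of branches to $T_\alpha$ for $\alpha\ge\lambda$ then follows, since a branch of $T_\alpha$ added by $\Pi_{\xi<\lambda}T_\xi$ would give an antichain of size $\kappa$ in $\bigl(\Pi_{\xi<\lambda}T_\xi\bigr)\times T_\alpha$, contradicting the $\kappa$-cc of that product.

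\textbf{Distributivity and the final clause.} For ${<}\kappa$-distributivity of $\Pi_{\xi<\lambda}T_\xi$, and for the last clause, I use the fact that $\P*\Pi_{\xi<\kappa}\dot{\mathcal T}_\xi$ has a ${<}\kappa$-closed dense subset, as in lemma~\ref{le:ltkappaClosedDenseSubset}. Its conditions specify top-level nodes in each tree on the support, and a descending sequence has its limit level built through those chosen nodes, as in lemma~\ref{le:tree-forcing-strat-closed}. Therefore $V[G][H]$ is an extension of $V$ by ${<}\kappa$-closed forcing, which adds no bounded subsets of $\kappa$. Any forcing over $V[G]$ by a subproduct or by the full product factors through this two-step iteration, so it adds no new ${<}\kappa$-sequences over $V[G]$. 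This gives distributivity and the preservation of inaccessibility.

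For Mahloness after forcing with $\Pi_{\xi<\kappa}T_\xi$, the closed dense subset has size $\kappa$. The same elementary-chain argument as above therefore shows it preserves stationarity of the regular cardinals, and since every regular cardinal of $V$ below $\kappa$ stays regular, $\kappa$ remains Mahlo.

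\textbf{Main obstacle.} I expect the hardest step to be the product sealing argument behind the $\kappa$-cc. The bookkeeping must handle all $\lambda$-sequences of automorphisms at once, while the limit levels are still formed only from images of the chosen branches. Closure of $M$ under $\lambda$-sequences is exactly what makes the bookkeeping cover every such sequence cofinally, and I would verify that part carefully.
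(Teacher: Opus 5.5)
Your proposal is correct and follows the paper's own assembly of the appendix results: strategic closure of the bounded-support product of the $\Q_\xi$, the generalized sealing argument for the $\kappa$-cc, non-addition of branches via the $\kappa$-cc of the product with the later tree, and the ${<}\kappa$-closed dense subset of $\P*\Pi_{\xi<\kappa}\dot{\mathcal T}_\xi$ for distributivity and for the final clause. The only difference is in the Mahlo step: you re-derive preservation of the stationary set of inaccessibles by an elementary-chain argument, whereas the paper simply cites that ${<}\kappa$-strategically closed forcing, and forcing with a ${<}\kappa$-closed dense subset, preserves ground-model stationary subsets of $\kappa$; your appeal to the forcing having size $\kappa$ is not needed.
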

Note that if $\kappa$ is Mahlo in $V$, then it remains Mahlo in $V[G]$ because ground model stationary subsets of $\kappa$ remain stationary after $\ltkappa$-strategically closed forcing and similarly it remains Mahlo after forcing with $\Pi_{\xi<\kappa}T_\xi$ because the two-step iteration $\P*\Pi_{\xi<\kappa}\dot{\mathcal T}_\xi$ has a dense $\ltkappa$-closed subset.
\subsection{Mahlo cardinals and Easton products}
\begin{lemma}\label{le:EastonProductKappaCC}
If $\kappa$ is Mahlo and $\P=\Pi_{\xi<\alpha}\P_\alpha$ is an Easton product of length $\kappa$, where each $\P_\alpha\in V_\kappa$, then $\P$ has the $\kappa$-cc.
\end{lemma}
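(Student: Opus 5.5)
The plan is the usual $\Delta$-system argument for Easton products, with the Mahlo cardinal supplying a stationary set of regular cardinals at which the supports can be pressed down.

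Suppose toward a contradiction that $\{p_i\mid i<\kappa\}$ is an antichain in $\P$. Each support $\operatorname{supp}(p_i)$ is a set of indices below $\kappa$. The Easton condition says that for every regular (indeed inaccessible) $\gamma\le\kappa$, the set $\operatorname{supp}(p_i)\cap\gamma$ is bounded in $\gamma$. In particular, each $\operatorname{supp}(p_i)$ is bounded in $\kappa$.

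First build a club $C\subseteq\kappa$ of closure points. We want that for every $\gamma\in C$ and every $i<\gamma$:
\begin{itemize}
\item $\operatorname{supp}(p_i)\subseteq\gamma$;
\item $p_i\in V_\gamma$, which we can arrange since each factor $\P_\xi\in V_\kappa$ and the conditions are small;
\item $V_\gamma$ is closed under the map $\xi\mapsto\P_\xi$ for $\xi<\gamma$.
\end{itemize}
Since $\kappa$ is Mahlo, the set $S=C\cap\{\gamma<\kappa\mid\gamma\text{ inaccessible}\}$ is stationary.

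For $\gamma\in S$, the Easton condition at the inaccessible $\gamma$ gives that $\operatorname{supp}(p_\gamma)\cap\gamma$ is bounded below $\gamma$. So the map $f(\gamma)=\sup(\operatorname{supp}(p_\gamma)\cap\gamma)$ is regressive on $S$. By Fodor's lemma it is constant, with some value $\beta$, on a stationary $S'\subseteq S$.

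Next, for $\gamma\in S'$ the restriction $p_\gamma\restrict(\beta+1)$ is an element of $V_{\beta'}$, where $\beta'$ is the least closure point above $\beta$ (using that each $\P_\xi\in V_\kappa$). There are fewer than $\kappa$ such restrictions, since $V_{\beta'}$ has size less than $\kappa$ by inaccessibility. Pressing down once more, or simply using pigeonhole on the stationary $S'$, we get $\gamma<\gamma'$ in $S'$ with
$$p_\gamma\restrict(\beta+1)=p_{\gamma'}\restrict(\beta+1).$$
We have $\operatorname{supp}(p_\gamma)\subseteq\gamma'$ because $\gamma<\gamma'\in C$. Also $\operatorname{supp}(p_{\gamma'})\cap\gamma'\subseteq\beta+1$. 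So the two supports overlap only within $\beta+1$, where the conditions agree. Their union is therefore a condition, and it still satisfies the Easton support requirement, because a union of two Easton supports is Easton. This contradicts the assumption that $\{p_i\mid i<\kappa\}$ is an antichain.

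The main obstacle is the bookkeeping that makes the restrictions $p_\gamma\restrict(\beta+1)$ range over a set of size less than $\kappa$. This needs the factors to be uniformly small, for example $\P_\xi\in V_\kappa$ as in the hypothesis, and it needs the inaccessibility of $\kappa$ to bound $|V_{\beta'}|$. Applying Fodor at inaccessible $\gamma$, not just regular ones, is what uses Mahlo-ness; the argument would also work with regular $\gamma$, provided Easton support means boundedness below every regular cardinal.
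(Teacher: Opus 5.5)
Your proof is correct, but it takes a different route from the paper's. The paper argues by reflection. It truncates conditions at the supremum of their supports so that $\P\subseteq V_\kappa$, and fixes a maximal antichain $A$ of size $\kappa$. Mahloness gives an inaccessible $\delta$ with $\langle V_\delta,\in,\P\cap V_\delta,A\cap V_\delta\rangle\prec\langle V_\kappa,\in,\P,A\rangle$. It then picks $q\in A\setminus V_\delta$. By the Easton condition at $\delta$, $q\restrict\delta$ lies in $\P\cap V_\delta$, so by the reflected maximality it is compatible with some $p\in A\cap V_\delta$, and hence so is $q$.

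You instead press down directly. Fodor on the stationary set of inaccessible closure points fixes the height $\beta$ of the lower parts. A pigeonhole count, using that $\kappa$ is a strong limit, fixes the lower part itself. Membership of $\gamma'$ in $C$ keeps the remaining supports disjoint.

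Both proofs rest on the same phenomenon: at an inaccessible closure point, the Easton condition splits a condition into a small piece below and a piece above. The paper's version is shorter, and the same reflection template drives lemma~\ref{le:MahloEastonProductNewSubsets}. Yours needs no passage to a maximal antichain and no coding of $\P$ into $V_\kappa$ for reflection. It also proves more: the $\kappa$ many $p_\gamma$ sharing a common restriction to $\beta+1$ are pairwise compatible, so $\P$ is in fact $\kappa$-Knaster.

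One small looseness: the restrictions $p_\gamma\restrict(\beta+1)$ need not lie in $V_{\beta'}$ itself. They do all lie in some fixed $V_\theta$ with $\theta<\kappa$, which is all the pigeonhole step needs.
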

\begin{proof}
By chopping off the conditions in $\P$ at the supremum of their support, $\P$ becomes a subset of $V_\kappa$. Consider the structure $\<V_\kappa,\in,\P,A>$ and note that since $\kappa$ is Mahlo, it has an elementary substructure $\<V_\delta,\in,\P\cap V_\delta,A\cap V_\delta>$, where $\delta$ is inaccessible. By our assumption that $A$ has size $\kappa$, $V_\delta$ must miss some element $q$ of $A$. Let $q^*$ be the restriction of $q$ to the supremum of its support in $\delta$, which must be bounded in $\delta$ since $\P$ has Easton support. Since $V_\delta$ satisfies that $A\cap V_\delta$ is a maximal antichain by elementarity, $q^*$ is compatible with some $p\in A\cap V_\delta$. But then $q$ is compatible with $p$ as well, which is the desired contradiction.
\end{proof}
\begin{lemma}\label{le:MahloEastonProductNewSubsets}
Suppose that $\kappa$ is Mahlo and $\P$ is an Easton support product of length $\kappa$, where each $\P_\alpha\in V_\kappa$. If $G\subseteq\P$ is $V$-generic and $A$ is a new subset of $\kappa$ in $V[G]$, then there is an inaccessible cardinal $\delta<\kappa$ such that $A\cap \delta$ is a new subset of $\delta$ in $V[G]$.
\end{lemma}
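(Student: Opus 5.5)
The plan is to reflect the name of $A$ down to an inaccessible $\delta<\kappa$ using Mahloness, and then use the Easton support to split the forcing at $\delta$. Fix a $\P$-name $\dot A$ for a subset of $\kappa$ and a condition $p\in G$ forcing that $\dot A\notin V$. Since $\P$ has the $\kappa$-cc by lemma~\ref{le:EastonProductKappaCC}, we may take $\dot A$ to be a nice name: for each $\alpha<\kappa$, a maximal antichain $A_\alpha$ of size less than $\kappa$ deciding $\check\alpha\in\dot A$. Chopping conditions at the supremum of their support, as in the proof of lemma~\ref{le:EastonProductKappaCC}, turns $\P$, $\dot A$ and $p$ into subsets of $V_\kappa$.

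Consider the structure $\<V_\kappa,\in,\P,\dot A,p,\langle \P_\xi\mid\xi<\kappa\rangle>$. The set of $\delta<\kappa$ with $V_\delta$ elementary in it is club, so since $\kappa$ is Mahlo there is an inaccessible $\delta>\operatorname{rank}(p)$ in this club. I would then check four things for such $\delta$. First, $\P\cap V_\delta$ is exactly the Easton product $\P^\delta=\Pi_{\xi<\delta}\P_\xi$; this uses that $\delta$ is regular, so Easton supports below $\delta$ are bounded. Second, $\P$ factors as $\P^\delta\times\P^{\text{tail}}$. Third, every antichain $A_\alpha$ with $\alpha<\delta$ lies in $V_\delta$, because it has size less than $\delta$ by elementarity, so $\dot A\cap\delta$ is a $\P^\delta$-name. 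Fourth, $\P^\delta$ has the $\delta$-cc, by applying lemma~\ref{le:EastonProductKappaCC} at $\delta$; for this I would pick $\delta$ Mahlo, or else simply argue via the reflection that maximal antichains of $\P^\delta$ are bounded.

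Next I want $A\cap\delta\notin V$. In $V$ define $\psi$: ``$p$ forces that $\dot A$ is not equal to any ground-model set.'' Its restriction to $\delta$ should say that $p$ forces, over $\P^\delta$, that $\dot A\cap\delta$ is not equal to any $x\subseteq\delta$ in $V$. This is the main obstacle, because the forcing relation for $\P$ is not literally definable in $V_\kappa$. To get around it, I would express ``$q$ forces $\check\alpha\in\dot A$'' through the nice name, which is $\Delta_0$ in the antichains, and express the novelty as a first-order property over $V_\kappa$. Concretely, the property is that for every $x\subseteq\kappa$ in $V$ and every $q\le p$ there are $r\le q$ and $\alpha$ such that $r$ decides $\alpha\in\dot A$ opposite to $x$. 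That is a second-order statement over $V_\kappa$, so I would instead reflect its bounded form: for every $\beta<\kappa$, every $x\subseteq\beta$ and every $q\le p$, there exist $r\le q$ and $\alpha$ with $r$ deciding $\alpha\in\dot A$ against $x$. This fails, since $A\cap\beta$ may well be old. So the better route is to argue directly, as follows.

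Suppose $A\cap\delta=x\in V$ for every such $\delta$. Then the function $\delta\mapsto A\cap\delta$ on the stationary set $S$ of reflecting inaccessibles lies in $V$ pointwise. By the $\delta$-cc of $\P^\delta$, $A\cap\delta$ is added by $\P^\delta$, and the tail $\P^{\text{tail}}$ is $\le\delta$-closed in the Easton-over-$\delta$ sense. Back in $V$, for each $\delta\in S$ some condition decides $\dot A\cap\delta=\check x_\delta$, and the support of that condition is bounded below $\delta$ by regularity and the Easton requirement. Fodor's lemma on $S$ then yields a single bounded-support piece, so $\kappa$-cc plus pressing down give one $q\in G$ and stationarily many $\delta$ for which $q$ decides $\dot A\cap\delta$. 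The $x_\delta$ cohere, so $q$ forces $\dot A=\bigcup x_\delta\in V$, contradicting $p$. I expect this Fodor-plus-coherence step to be the crux. The care needed is in showing that the deciding conditions can be chosen with supports bounded below $\delta$, using the $\delta$-cc of $\P^\delta$ and $\le\delta$-distributivity of the tail to pull the decision into $\P^\delta$.
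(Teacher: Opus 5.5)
Your last paragraph can be completed into a correct proof, but the tools you name for its crux are not available, and your fourth check is false. The $\delta$-cc of $\P^\delta$ fails in general. If $\kappa$ is the least Mahlo cardinal, no inaccessible $\delta<\kappa$ is Mahlo, so you cannot pick $\delta$ Mahlo. Below a non-Mahlo inaccessible $\delta$ there is a club $C\subseteq\delta$ of singular cardinals, and being closed it is bounded below every regular $\lambda<\delta$. With, say, $\P_\xi=\Add(\xi,1)$ at regular $\xi$, every bounded subset of $\set{c^+\mid c\in C}$ is then an Easton support. So $\P^\delta$ contains a bounded-support product of $\delta$ many nontrivial factors, and hence an antichain of size $\delta$. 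Nor is the tail ${\le}\delta$-closed or distributive in general, since the factors are arbitrary posets in $V_\kappa$ (all of them could be $\Add(\omega,1)$).

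Neither property is needed. By your third check, $\dot A\cap\check\delta$ is a $\P^\delta$-name, and $G\cap\P^\delta$ is $V$-generic for $\P^\delta$. So if $A\cap\delta=x_\delta\in V$, then some $q_\delta\in G\cap\P^\delta$ forces $\dot A\cap\check\delta=\check x_\delta$. Since $\P^\delta=\P\cap V_\delta$, the map $\delta\mapsto\operatorname{rank}(q_\delta)$ is regressive. These choices depend on $G$, so they are made in $V[G]$, not ``back in $V$''. You therefore need $S$ to remain stationary in $V[G]$, which the $\kappa$-cc provides. Fodor plus $|V_\gamma|<\kappa$ then give a single $q\in G$ equal to $q_\delta$ for unboundedly many $\delta$. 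This $q$ decides every $\check\alpha\in\dot A$, so it forces $\dot A=\check x$ with $x=\set{\alpha<\kappa\mid q\forces\check\alpha\in\dot A}\in V$. That is how to see your $\bigcup x_\delta$ lies in $V$, and it contradicts $p\in G$.

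The paper's proof is shorter because it uses the idea you dismissed in your third paragraph: novelty need not be expressed by quantifying over ground-model sets $x$. Instead, $p\forces\dot A\notin V$ iff no $q\le p$ decides all of $\dot A$. That is, for every $q\le p$ there are $\alpha<\kappa$ and $r_1,r_2\le q$ such that $r_1$ is incompatible with every element of the antichain $\mathcal A_\alpha$ (your $A_\alpha$), while the conditions below elements of $\mathcal A_\alpha$ are dense below $r_2$. This is first-order over $\langle V_\kappa,\in,\P,\dot A,p\rangle$. Mahloness reflects it to an inaccessible $\delta$ with $\P\cap V_\delta=\P^\delta$ (the paper's $\P_\delta$) and $\dot A\cap V_\delta=\dot A\restrict\delta$. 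The reflected statement says directly that $p\forces_{\P^\delta}\dot A\restrict\delta\notin V$, and since $p\in G\cap\P^\delta$, $A\cap\delta$ is new. That argument needs no contradiction hypothesis, no Fodor, and no preservation of stationarity, and it works at every such reflecting inaccessible. Your repaired route instead uses Mahloness only to supply a stationary set for pressing down, and it shows only that not every $\delta\in S$ can fail.
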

\begin{proof}
By lemma~\ref{le:EastonProductKappaCC}, $\P$ has the $\kappa$-cc. By chopping off the conditions in $\P$ at the supremum of their support, $\P$ becomes a subset of $V_\kappa$. Fix a nice $\P$-name $\dot A$ such that $\dot A_G=A$, and recall that we can view such a nice name as a function from $\kappa$ into the antichains of $\P$ that sends $\alpha$ to the antichain $\mathcal A_\alpha$, so that $\alpha\in A$ if and only if $G\cap \mathcal A_\alpha\neq\emptyset$. Since $\P$ has the $\kappa$-cc, every $\mathcal A_\alpha\in V_\kappa$. Fix a condition $p\in \P$ forcing that $\dot A\not\in V$. It is easy to check that this is equivalent to assuming that for every $q\leq p$, there is $\alpha<\kappa$ and $r_1,r_2\leq q$ such that $r_1\forces G\cap\mathcal A_\alpha=\emptyset$ and $r_2\forces G\cap \mathcal A_\alpha\neq\emptyset$. The statement that $r\forces G\cap \mathcal A_\alpha\neq\emptyset$ is equivalent to the set $D=\set{q\mid q\leq a\text{ for some }a\in\mathcal A_\alpha}$ being dense below $r$, and the statement $r\forces G\cap \mathcal A_\alpha=\emptyset$ is equivalent to $\forall q\leq r\forall a\in \mathcal A_\alpha\,\neg q\leq a$. Thus, the structure $(V_\kappa,\P,\dot A,p)$ can recognize that $p$ forces that $\dot A$ is new. Since $\kappa$ is Mahlo, there is an inaccessible $\delta$ such that $(V_\delta,\P\cap V_\delta,\dot A\cap V_\delta,p)\prec (V_\kappa,\P,\dot A,p)$. Observe that $\P\cap V_\delta=\P_\delta$ and $\dot A\cap V_\delta=\dot A\restrict\delta$. Thus, by elementarity, $p$ forces in $\P_\delta$ that $\dot A\restrict \delta$ is new. But $p\in G_\delta$, the restriction of $G$ to $\P_\delta$, and therefore $\dot A\restrict\delta$ really is new.
\end{proof}
\bibliographystyle{alpha}
\bibliography{km+}
\end{document}